\title[Cannon-Thurston fibers for iwip automorphisms of $F_N$]%
{Cannon-Thurston fibers for iwip automorphisms of $F_N$}
\author{Ilya Kapovich and Martin Lustig}
\newtheorem{theor}{Theorem}
\newtheorem{thm}{Theorem}[section] 
\newtheorem{lem}[thm]{Lemma}
\newtheorem{cor}[thm]{Corollary} 
\newtheorem{prop}[thm]{Proposition}
\def\strutdepth{\dp\strutbox}
\def \ss{\strut\vadjust{\kern-\strutdepth \sss}}
\def \sss{\vtop to \strutdepth{
\baselineskip\strutdepth\vss\llap{$\diamondsuit\;\;$}\null}}
\def\strutdepth{\dp\strutbox}
\def \sst{\strut\vadjust{\kern-\strutdepth \ssss}}
\def \ssss{\vtop to \strutdepth{
\baselineskip\strutdepth\vss\llap{$\spadesuit\;\;$}\null}}
\def\strutdepth{\dp\strutbox}
\def \ssh{\strut\vadjust{\kern-\strutdepth \sssh}}
\def \sssh{\vtop to \strutdepth{
\baselineskip\strutdepth\vss\llap{$\heartsuit\;\;$}\null}}
\newcommand{\R}{\mathbb R}
\newcommand{\Hy}{\mathbb H}
\def\epsilon{\varepsilon}
\def\phi{\varphi}
\def\hat{\widehat}
\def\bar{\overline}
\def\subset{\subseteq}
\newcommand{\Out}{\mbox{Out}}
\newcommand{\Aut}{\mbox{Aut}}
\newcommand{\card}{\mbox{card}}
\newcommand{\diag}{\mbox{diag}}
\newcommand{\normal}{\lhd}
\newcommand{\FN}{F_N}   
\newcommand{\cvnbar}{\overline{\mbox{cv}}_N}
\newcommand{\CVN}{\mbox{CV}_N}
\newcommand{\CVNbar}{\overline{\mbox{CV}}_N}
\newcommand{\ind}{\mbox{ind}_\mathcal Q}
\newcommand{\qed}{$\square$}
\begin{document}

\maketitle

\begin{abstract}
  For any atoroidal iwip $\phi \in \Out(\FN)$ the mapping torus group
  $G_\phi=F_N\rtimes_\phi \langle t\rangle$ is hyperbolic, and, by a
  result of Mitra, the
  embedding $\iota: \FN \overset{\lhd}{\longrightarrow} G_\phi$
  induces a continuous, $\FN$-equivariant and surjective {\em
    Cannon-Thurston map} $\hat \iota: \partial \FN \to \partial
  G_\phi$.

  We prove that for any $\phi$ as above, the map $\hat \iota$ is
  finite-to-one and that the preimage of every point of $\partial
  G_\phi$ has cardinality $\le 2N$.

  We also prove that every point $S\in \partial G_\phi$ with $\ge 3$
  preimages in $\partial F_N$ has the form $(wt^m)^\infty$ where $w\in
  F_N, m\ne 0$, and that there are at most $4N-5$ distinct
  $F_N$-orbits of such {\em singular} points in $\partial G_\phi$ (for
  the translation action of $F_N$ on $\partial G_\phi$).

  By contrast, we show that for $k=1,2$ there are uncountably many
  points $S\in \partial G_\phi$ (and thus uncountably many
  $\FN$-orbits of such $S$) with exactly $k$ preimages in $\partial
  F_N$.

\end{abstract}

\maketitle

\section{Introduction}

The notion of a Cannon-Thurston map goes back to a celebrated preprint
of Cannon and Thurston from 1984 that was eventually published in
2007~\cite{CT}. They consider a closed hyperbolic $3$-manifold $M$
which fibers over a circle, with the fiber being a closed hyperbolic
surface $\Sigma$. Then the inclusion $\Sigma\subseteq M$ lifts to the map
between their universal covers
$i:\widetilde \Sigma \to \widetilde M$, where $\widetilde \Sigma =\mathbb H^2$ and
$\widetilde M = \mathbb H^3$. Cannon and Thurston prove in \cite{CT}
that the map $i$ extends to a continuous $\pi_1(S)$-equivariant map between the hyperbolic
boundaries at infinity: $\hat \iota: \partial_\infty \mathbb H^2 \to \partial_\infty \mathbb H^3$, where $\mathbb
\partial_\infty \mathbb H^2 = \mathbb S^1$ and $\partial_\infty \mathbb H^3 = \mathbb S^2$. The
map $\hat \iota$ is necessarily surjective, and so, being a continuous map
from 
$\mathbb S^1$ to $\mathbb S^2$,
it gives a space-filling
curve. Moreover, the map $\hat \iota$ is finite-to-one, and the full preimage
of every point of $\mathbb S^2$ has cardinality at most $4g-2$, where
$g$ is the genus of the fiber $\Sigma$.

\smallskip

In group-theoretic terms, in this example we have an inclusion $H\le
G$, where $H=\pi_1(\Sigma)$ and $G=\pi_1(M)$ are both word-hyperbolic, and their Gromov boundaries agree with the corresponding hyperbolic boundaries at infinity:  $\partial H = \partial_\infty \Hy^2 = \mathbb S^1$ and $\partial G = \partial_\infty \Hy^3 = \mathbb S^2$.  The natural question about possible generalizations of the Cannon-Thurston result 
led to the
following definition (see 
subsection \ref{CTmaps} below for a more precise statement):

\smallskip

If $G$ is a word-hyperbolic group and $H$ a
word-hyperbolic subgroup, and if the inclusion $\iota: H\to G$ extends to a continuous map
$\hat \iota: \partial H \to \partial G$, then the map $\hat \iota$ is called the
\emph{Cannon-Thurston map}; in this context this definition is due to Mitra~\cite{M1,M2,M3}. 
In particular, if the Cannon-Thurston map $\hat \iota: \partial H \to
\partial G$ exists, then this map is unique and for any sequence
$h_n\in H\cup \partial H$ converging to some $X\in \partial H$ in the
topology of $H\cup\partial H$, we have $\lim_{n\to\infty} h_n=\hat
\iota(X)$ in $G\cup \partial G$. It is known, see
\cite[Proposition~2.12]{JKLO},  that if $H$ is a non-elementary word-hyperbolic subgroup of a word-hyperbolic group $G$, then a map $\partial H\to \partial G$ is the Cannon-Thurston map if and only if this map is continuous and $H$-equivariant.

\smallskip

It is well-known that, if $H\le G$ is a quasiconvex
subgroup of a word-hyperbolic
group $G$, then $H$ is word-hyperbolic and the inclusion $H\le G$
extends to a continuous topological embedding $\partial H\to\partial
G$. Thus in this case the Cannon-Thurston map exists and, moreover, is
injective.
Surprisingly, it turns out that the Cannon-Thurston map exists in many
situations where $H\le G$ is not quasiconvex, as shown by the work of
Mitra in 1990s~\cite{M1,M2,M3,M4}.

\smallskip

In particular, a result of Mitra~\cite{M2} states
that whenever
\[
1\to H \to G \to Q\to 1
\]
is a short exact sequence of word-hyperbolic groups, then the inclusion
$H\le G$ extends to a continuous Cannon-Thurston map $\hat \iota: \partial
H \to \partial G$. It is well-known~\cite{Alonso91} that in this
situation, if $H$ and $Q$ are infinite, then $H\le G$ is not
quasiconvex. Also, if $H$ 
is infinite, then the limit
set of $H$ in $\partial G$ is equal to $\partial G$~\cite{KS96} and
therefore the map $\hat \iota: \partial
H \to \partial G$ is onto.
This result of Mitra generalizes the original
theorem of Cannon and Thurston mentioned above, since in that context
one has a short exact sequence $1\to \pi_1(\Sigma)\to \pi_1(M)\to \mathbb Z\to 1$.

\smallskip

Until recently it has been unknown if there are any inclusions $H\le G$ (with $H$ and
$G$ word-hyperbolic) where the Cannon-Thurston map does not exist~\cite{M3,KB}. A
surprising new result of Baker and Riley~\cite{BR} constructs the first
example of such an inclusion (with $H=F_3$ ) where the Cannon-Thurston
map does not exist. Their results were subsequently further extended by Matsuda and Oguni~\cite{MO}.

\medskip

The result of Mitra, mentioned above, applies in particular to word-hyperbolic
free-by-cyclic groups. Recall that if $\Phi\in \Aut(F_N)$ is an
automorphism of $F_N$, then the \emph{mapping torus} group of $\Phi$
is
\[
G_\Phi=F_N\rtimes_\Phi \langle t\rangle =\langle F_N, t\mid t ht^{-1}=\Phi(h), h\in F_N\rangle. 
\]

An automorphism $\Phi$ of $F_N$ is called \emph{hyperbolic} if the
group $G_\Phi$ is word-hyperbolic. It follows from the Bestvina-Feighn
Combination Theorem~\cite{BF92} and a result of Brinkmann~\cite{Br}
that $\Phi\in\Out(F_N)$ is hyperbolic if and only if 
$\Phi$ is \emph{atoroidal}, that is, does not have any nontrivial
periodic conjugacy classes in $F_N$ (which is also equivalent to the
condition that $G_\Phi$ does not contain any $\mathbb Z\times\mathbb
Z$-subgroups). An element $\phi\in \Out(F_N)$ is called
\emph{hyperbolic} if some (equivalently, any) representative
$\Phi\in\Aut(F_N)$ of $\phi$ is hyperbolic.
It is easy to see that
$G_\Phi$ and the inclusion $F_N\le
G_\Phi$ depend only on the outer automorphism class $\phi$ of $\Phi$, so that 
for simplicity
we will write from now on
$G_\phi$ instead of $G_\Phi$.
So, if $\phi\in \Out(F_N)$ is a hyperbolic automorphism then we have
a short exact sequence
\[
1\to F_N\to G_\phi\to\langle t\rangle \to 1
\]
of three word-hyperbolic groups, and hence, 
as discussed above,
there
does exist a continuous $F_N$-equivariant surjective Cannon-Thurston map $\hat \iota:
\partial F_N\to\partial G_\phi$.  

\smallskip

By now the properties of the Cannon-Thurston map in the original context of~\cite{CT} of  a closed hyperbolic 3-manifold fibering over a circle are very well understood. By contrast, apart from its existence, little has been known about the specific properties of the Cannon-Thurston map for mapping torus groups of hyperbolic automorphisms of free groups. The most typical type of hyperbolic automorphisms of free groups are
so-called iwip or ``fully irreducible'' hyperbolic
automorphisms. Recall that an element $\phi\in\Out(F_N)$ is said to be
\emph{irreducible with irreducible powers} (\emph{iwip}, for short),
or \emph{fully irreducible}, if no positive power of $\phi$ preserves
the conjugacy class of a proper free factor of $F_N$. Bestvina and
Handel proved~\cite{BH92} that if an iwip $\phi\in \Out(F_N)$ fails to
be atoroidal (i.e., in view of the above discussion, fails to be
hyperbolic) then $\phi$ is induced by a homeomorphism of a compact
connected surface with a single boundary component. Thus, for $N\ge
3$, ``most'' iwips are atoroidal. By contrast, it is easy to see that for $N= 2$
there are no atoroidal elements in $\Out(F_2)$. Moreover, in a sense
made precise by Rivin~\cite{Rivin}, for $N\ge 3$ a ``random'' element of
$\Out(F_N)$ is a hyperbolic iwip.
Note also that, if $\phi\in \Out(F_N)$ is a hyperbolic iwip, then $\partial G_\phi$ is known (by combined results of \cite{Br02}, \cite{KK00}) to be homeomorphic to the Menger curve. 
As recently proved by Dowdall, Kapovich and Leininger in \cite{DKL}, for a hyperbolic $\phi\in \Out(F_N)$ being fully irreducible is equivalent to being \emph{irreducible}, in the sense originally defined by Bestvina and Handel in \cite{BH92}.

 If $\phi\in\Out(F_N)$ is a hyperbolic automorphism, for a point $S\in
\partial G_\phi$ let the \emph{degree of $S$}, denoted $\deg(S)$, be
the cardinality of the set $\hat \iota^{-1}(S)$. Since $\hat \iota$ is
surjective, for every $S\in \partial F_N$ we have $\deg(S)\ge 1$.

We can now state the first result of this paper, proved in Section \ref{sec:main-results} below:

\begin{theor}\label{thm:A}
Let 
$\phi\in \Out(F_N)$ be a hyperbolic iwip,
and let $G_\phi=F_N\rtimes_\phi \mathbb Z$ be the mapping torus group of $\phi$. 
Then for every $S\in \partial G_\phi$ we have:
$$\deg(S)\le 2N.$$
\end{theor}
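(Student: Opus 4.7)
The plan is to analyze the equivalence relation $\sim$ on $\partial \FN$ defined by $X \sim Y$ iff $\hat\iota(X) = \hat\iota(Y)$, and to show that every $\sim$-equivalence class has cardinality at most $2N$. The natural tool, for $\phi$ a hyperbolic iwip, is the theory of $\R$-trees dual to $\phi$ and $\phi^{-1}$: let $T_+, T_-$ denote the attracting trees of $\phi$ and $\phi^{-1}$ in the boundary of Outer space $\CVN$, and let $Q_+\colon \partial \FN \to \widehat T_+$ and $Q_-\colon \partial \FN \to \widehat T_-$ be the associated Coulbois--Hilion--Lustig $Q$-maps. The first step is to establish that every Cannon--Thurston fiber is contained in a joint fiber of $(Q_+, Q_-)$, namely
\[
\hat\iota(X) = \hat\iota(Y) \implies Q_+(X) = Q_+(Y) \text{ and } Q_-(X) = Q_-(Y).
\]

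With this containment in hand, the second step is combinatorial: to bound $|Q_+^{-1}(p)|$ for $p \in \widehat T_+$. A direct analysis of how sequences in $\FN$ converging to distinct points of $\partial \FN$ project to $T_+$ shows that distinct preimages of a common $p$ determine distinct directions at $p$ in $T_+$, whence $|Q_+^{-1}(p)| \le \val_{T_+}(p)$. Since $\phi$ is atoroidal, the $\FN$-action on $T_+$ is free, and the Gaboriau--Levitt geometric index inequality gives
\[
\sum_{[p]} \bigl(\val_{T_+}(p) - 2\bigr) \le 2N - 2,
\]
where the sum runs over $\FN$-orbits of branch points of $T_+$. Hence $\val_{T_+}(p) \le 2N$ at every branch point and, trivially, $\val_{T_+}(p) \le 2$ at other points of $T_+$, yielding $\deg(S) = |\hat\iota^{-1}(S)| \le 2N$.

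The main obstacle is the first step: relating Cannon--Thurston fibers in $\partial G_\phi$ to joint $Q_\pm$-fibers in $\widehat T_+ \times \widehat T_-$. The strategy is to study sequences $g_n \in G_\phi$ converging to $S \in \partial G_\phi$ via their normal form decompositions $g_n = w_n t^{k_n}$, with $w_n \in \FN$ and $k_n \in \Z$. When $k_n \to +\infty$, iterating a train-track representative of $\phi$ forces the images $w_n \cdot v_0$ of a basepoint $v_0 \in T_+$ to accumulate along a single ray in $T_+$, so that any two $\FN$-limits of $g_n$ must share the same $Q_+$-image; the case $k_n \to -\infty$ is symmetric and produces a common $Q_-$-image; the bounded case is handled by observing that sequences with bounded $t$-exponent remain within a quasiconvex neighbourhood of a single coset of $\FN$ in $G_\phi$. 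A secondary technicality concerns extending the valence-based fiber bound from interior points of $T_+$ to points of $\widehat T_+ \setminus T_+$ and to ends of $T_+$, where the notion of direction must be replaced by a count of accessible half-lines; the freeness of the $\FN$-action on $T_+$ in the atoroidal iwip case makes this extension essentially automatic.
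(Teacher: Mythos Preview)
Your Step 1 is false as stated. Take any leaf $(X,Y)\in L_{BFH}(\phi)\subseteq L(T_-)$; then $\mathcal Q_-(X)=\mathcal Q_-(Y)$ and hence $\hat\iota(X)=\hat\iota(Y)$, but since $L(T_+)\cap L(T_-)=\emptyset$ we have $(X,Y)\notin L(T_+)$ and thus $\mathcal Q_+(X)\neq \mathcal Q_+(Y)$. So Cannon--Thurston fibers are \emph{not} contained in joint $(\mathcal Q_+,\mathcal Q_-)$-fibers. What is actually true, and what the paper establishes, is the disjunction: every $\hat\iota$-fiber lies in a single $\mathcal Q_-$-fiber \emph{or} in a single $\mathcal Q_+$-fiber, never a mixture. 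The ``or'' is Corollary~\ref{cor:step1}, obtained by combining Mitra's ending-lamination description of the fibers with the identification $\Lambda_\phi=L(T_-)$ (Proposition~\ref{prop:step1}); the ``no mixture'' uses that $L(T_+)$ and $L(T_-)$ share no half-leaf (Proposition~\ref{prop:disj}), a nontrivial input your outline omits. Your proposed route to Step~1 via the $t$-exponent $k_n$ of approximating sequences $g_n=w_n t^{k_n}\to S$ cannot yield the (false) ``and'' conclusion, and in any case every $X\in\partial F_N$ is already a limit of a sequence with $k_n\equiv 0$, so the trichotomy on $k_n$ does not by itself separate the two types of identification.

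For Step 2, the paper does not pass through $|\mathcal Q_+^{-1}(p)|\le \val_{T_+}(p)$ and the Gaboriau--Levitt geometric-index inequality; it applies directly the Coulbois--Hilion $\mathcal Q$-index bound $\sum_{[x]}\max\{0,|\mathcal Q^{-1}(x)|-2\}\le 2N-2$ (Proposition~\ref{prop:Q-ind}), which immediately gives $|\mathcal Q^{-1}(x)|\le 2N$ for every $x\in\bar T_\pm$. Your claimed injection from $\mathcal Q_+^{-1}(p)$ into the set of directions at $p$ may well hold here, but it is a separate statement requiring its own argument: the $\mathcal Q$-index and the geometric index are in general distinct invariants, so one cannot simply trade one bound for the other.
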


Moreover, as noted below in Remark~\ref{sharp-bound}, the $2N$ bound in Theorem~\ref{thm:A} is sharp, that is, for every $N \geq 3$ there exist an automorphism $\phi$ as in Theorem~\ref{thm:A} such that for some $S\in \partial G_\phi$ we have $\deg(S)=2N$.  By showing that in Theorem~\ref{thm:A} the Cannon-Thurston map $\hat\iota:\partial F_N\to\partial G_\phi$,  Theorem~\ref{thm:A} provides a positive answer, for the case of mapping tori of hyperbolic iwips,  to Problem~1.20, attributed to Swarup, in Bestvina's Geometric Group Theory problem list~\cite{Be04}.

\smallskip

In~\cite{M1} Mitra gave a description of the fibers of the
Cannon-Thurston map $\hat \iota:\partial H\to\partial G$ for any short exact
sequence 
of three hyperbolic groups $1\to H\to G\to
Q\to 1$. This description is given in terms of ``ending laminations''
$\Lambda_z$, $z\in\partial Q$, where $\Lambda_z\subseteq \partial^2
H=\{(X,Y)\in \partial H\times\partial H: X\ne Y\}$.  Given a hyperbolic iwip
$\phi\in\Out(F_N)$, there are several 
``laminations'' $\subseteq \partial^2 F_N$ naturally associated to $\phi$ that arose in the
study of $\Out(F_N)$:
The laminations $L_{BFH}(\phi^{\pm 1})\subseteq \partial^2 F_N$ were introduced
by Bestvina, Feighn and Handel in \cite{BFH97} and are defined in terms of
train tracks representing $\phi$.  The laminations
$L(T_\pm(\phi))$ of the the trees $T_\pm(\phi)$
are special cases of the general notion of a "dual" or "zero" lamination $L(T)$ for 
an $\R$-tree
$T$ 
with isometric $\FN$-action
introduced in \cite{CHL2}. Here $T_\pm(\phi)$
define 
the attracting/repelling fixed points for
the (right) action of $\phi$ on the compactified Outer space
$\CVNbar$. In our earlier work~\cite{KL6} we showed that for a hyperbolic iwip
$\phi\in\Out(F_N)$ we have $L(T_-(\phi))=\diag(L_{BFH}(\phi))$, the
``diagonal extension''  of $L_{BFH}(\phi)$. See
Section~\ref{sect:laminations} below for precise definition of
these terms.

\smallskip

The first step in the proof of
Theorem~\ref{thm:A} is 
to relate, using our results from~\cite{KL6},  
Mitra's ``ending laminations'' $\Lambda_{\phi^{\pm 1}}$ , for the short exact sequence
corresponding to the mapping torus group of a hyperbolic iwip
$\phi\in\Out(F_N)$, to the laminations $L(T_\pm(\phi))$.  
We prove:

\medskip
\noindent
{\bf Proposition \ref{prop:step1}:}
{\em
Let $\phi \in \Out(\FN)$ be a hyperbolic iwip. Then
$$\Lambda_\phi=L(T_-(\phi))=\diag(L_{BFH}(\phi))\,.$$}

Then, by
Mitra's results from~\cite{M1}, Proposition~\ref{prop:step1} implies Corollary~\ref{cor:step1} which states
that for the Cannon-Thurston map $\hat \iota:\partial F_N\to\partial
G_\phi$ and for distinct $X,Y\in\partial F_N$ we have $\hat \iota(X)=\hat
i(Y)$ if and only if $(X,Y)\in L(T_-(\phi))\cup L(T_+(\phi))$.  Corollary~\ref{cor:step1}  is a key fact for our analysis of the fibers of the Cannon-Thurston map.
After obtaining Corollary~\ref{cor:step1}, we use a description, due to Coulbois, Hilion and Lustig in
\cite{CHL2}, of the dual lamination $L(T)$, where $T$ is an
$\R$-tree with dense $F_N$-orbits (e.g. $T=T_\pm(\phi)$) in terms of the
so-called $\mathcal Q$-map. We combine this description of
$L(T_\pm(\phi))$ with the results of the ``index'' theory for trees that define points
in $\CVNbar$ and elements of $\Out(F_N)$, particularly a theorem of Coulbois-Hilion \cite{CH10} which gives a bound
for the \emph{$\mathcal Q$-index} of $T_\pm(\phi)$, to derive the
conclusion of Theorem~\ref{thm:A}.

Proposition~\ref{prop:step1} corrects an error in Mitra's paper  \cite{Mitra99}  and can be used to fix a gap, created by that error, in the proof of one of the main results of \cite{Mitra99}, namely Theorem~3.4 there regarding quasiconvexity of certain kinds of finitely generated subgroups in mapping tori of hyperbolic iwips.  Mitra's Theorem~3.4 is relevant for the new result of Hagen and Wise \cite{HW} about cubulating  hyperbolic free-by-cyclic groups.  We explain how to correct the proof of Theorem~3.4 of \cite{Mitra99} in Appendix~\ref{A} at the end of this paper.

Proposition~\ref{prop:step1} and Corollary~\ref{cor:step1} are also related to the general results of Bowditch~\cite{Bow02} about hyperbolic  boundaries and the associated Cannon-Thurston maps for one-sided and two-sided "hyperbolic stacks" of hyperbolic metric spaces.

After proving Theorem~\ref{thm:A}, we undertake a more detailed study of the fibers of the Cannon-Thurston map.
In analogy to the classical Cannon-Thurston situation
we say that $S\in \partial
G_\phi$ is \emph{simple} if $\deg(S)=1$, that $S$ is \emph{regular} if
$\deg(S)=2$, and that $S$ is \emph{singular}
if $\deg(S)\ge 3$. It is straightforward to show that $\deg(S)=\deg(gS)$ for
any $S\in \partial G_\phi$ and $g\in G_\phi$.
The group $G=G_\phi$ acts on $\partial G_\phi$ by translations, and hence so does $F_N\le G_\phi$. When referring to $G$-orbits or $F_N$-orbits of points in $\partial G$, we will mean these translation actions. The $\FN$-orbit of $S\in \partial G_\phi$ will be denoted by $[S]_{\FN}$; as argued above, the degree $\deg([S]_{\FN})$ is well defined.
The following result (proved in section \ref{sec:main-results}) gives fairly precise information about the singular points in $\partial G_\phi$:

\begin{theor}\label{thm:B}
Let 
$\phi \in \Out(\FN)$
be a hyperbolic iwip and let $G_\phi$ be its mapping torus group.
Then:
\begin{enumerate}
\item Every singular point $S\in \partial G_\phi$ 
has the form $S=(wt^m)^\infty$ for some $w\in F_N$ and $m\ne 0$.
\item The number $\sigma$ of $F_N$-orbits of singular points in
  $\partial G_\phi$ is finite and satisfies $2\le \sigma\le 4N-5$. 
\item We have
\[
\sum
\left(\deg([S]_{\FN})-2\right)\le 4N-5
\]
where the sum is taken over all $F_N$-orbits $[S]_{F_N}$ of
singular points in $\partial G_\phi$.
\end{enumerate}
\end{theor}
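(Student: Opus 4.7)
The plan combines Corollary~\ref{cor:step1} with the Coulbois-Hilion index theory for the trees $T_\pm(\phi)$, proceeding in three steps: a pigeonhole reduction of each singular fiber to a single $\mathcal Q$-fiber, a dynamical identification of that fiber with an attracting boundary fixed point of some element $wt^k\in G_\phi$, and a final counting argument via the $\mathcal Q$-index. Throughout, let $\mathcal Q_\pm$ denote the $\mathcal Q$-maps associated with $T_\pm(\phi)$, so that each $L(T_\pm(\phi))$ is precisely the equivalence relation ``same $\mathcal Q_\pm$-fiber''.

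\smallskip

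\emph{Structure of a singular fiber.} By Corollary~\ref{cor:step1}, any two distinct points of $\hat\iota^{-1}(S)$ form a pair in $L(T_-(\phi))\cup L(T_+(\phi))$. If $S$ is singular, choose three distinct $X_1,X_2,X_3\in\hat\iota^{-1}(S)$. Pigeonhole among the three pairs yields two that lie in the same $L(T_\pm(\phi))$; since any two of these three pairs share a common vertex, we may assume $\{X_1,X_2\},\{X_1,X_3\}\in L(T_+(\phi))$. Then $\mathcal Q_+(X_1)=\mathcal Q_+(X_2)=\mathcal Q_+(X_3)$, and in particular $\{X_2,X_3\}\in L(T_+(\phi))$. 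Running the same argument on every triple of points in $\hat\iota^{-1}(S)$ forces the entire singular fiber to coincide with a single $\mathcal Q_+$- or $\mathcal Q_-$-fiber of cardinality $\ge 3$.

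\smallskip

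\emph{Proof of (1).} The Coulbois-Hilion index theory identifies any $\mathcal Q_\pm$-fiber of size $\ge 3$ with the set of attracting fixed points in $\partial \FN$ of an automorphism $\Phi'=\mathrm{Inn}(w)\circ\phi^k\in\Aut(\FN)$, where $w\in\FN$ and $k\ge 1$ in the $T_+$-case, $k\le -1$ in the $T_-$-case. Inside $G_\phi$ the automorphism $\Phi'$ is realised as conjugation by $wt^k$; since $k\ne 0$, this element has infinite order and so is hyperbolic in the word-hyperbolic group $G_\phi$, with attracting boundary fixed point $(wt^k)^\infty\in\partial G_\phi$. Using $\FN$-equivariance of $\hat\iota$ and the standard fact that conjugation and translation induce the same map on the Gromov boundary of $G_\phi$, one obtains $\hat\iota(\widehat{\Phi'}(X))=(wt^k)\cdot\hat\iota(X)$ for every $X\in\partial \FN$. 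Hence the $\hat\iota$-image of every attracting fixed point of $\widehat{\Phi'}$ is a fixed point of the hyperbolic translation $wt^k$ on $\partial G_\phi$, and matching attracting dynamics identifies it with $(wt^k)^\infty$. Combined with the previous paragraph this proves~(1).

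\smallskip

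\emph{Proofs of (2) and (3).} Under the correspondence just set up, $\FN$-orbits of singular points in $\partial G_\phi$ are in bijection with $\FN$-orbits of $\mathcal Q_\pm$-fibers of cardinality $\ge 3$ in $T_\pm(\phi)$, with matching cardinalities $\deg([S]_{\FN})$. Therefore
\[
\sum_{[S]_{\FN}\,\text{singular}}\!\bigl(\deg([S]_{\FN})-2\bigr)\;=\;\ind(T_+(\phi))+\ind(T_-(\phi)).
\]
The Coulbois-Hilion bound $\ind(T_\pm(\phi))\le 2N-2$ immediately gives the estimate $4N-4$; the main technical obstacle is to sharpen this to $4N-5$, which requires an additional ``loss of one'' argument exploiting the atoroidality of $\phi$ and the non-geometricity of $T_\pm(\phi)$ to preclude simultaneous saturation of the index bound on both trees. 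The upper bound $\sigma\le 4N-5$ in~(2) then follows from~(3) since each singular orbit contributes at least $1$ to the sum. Finally, the lower bound $\sigma\ge 2$ uses that both laminations $L(T_\pm(\phi))=\diag(L_{BFH}(\phi^{\pm 1}))$ are non-trivial, forcing each of $T_\pm(\phi)$ to carry at least one $\FN$-orbit of $\mathcal Q$-fibers of size $\ge 3$; the two resulting singular $\FN$-orbits in $\partial G_\phi$ are distinct, being distinguished by the sign of the exponent $k$ from the previous paragraph.
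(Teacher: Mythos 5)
Your reduction of a singular fiber to a single $\mathcal Q_+$- or $\mathcal Q_-$-fiber (via Corollary~\ref{cor:step1}, Proposition~\ref{prop:Q}~(1) and the absence of common half-leaves, Proposition~\ref{prop:disj}) and the resulting identity relating $\sum(\deg([S]_{\FN})-2)$ to the $\mathcal Q$-indices of $T_\pm(\phi)$ are sound, and up to that point you are following the paper's own route (Theorem~\ref{thm:types} plus the Coulbois--Hilion bound, Proposition~\ref{prop:Q-ind}). But the two decisive steps are missing or outsourced to unproved black boxes. First, the sharpening from $4N-4$ to $4N-5$ --- which is the whole point of parts (2) and (3) --- is only announced (``requires an additional loss of one argument''), not proved. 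The paper supplies it in Corollary~\ref{cor:4N-5}: equality $\ind(T)=2N-2$ holds if and only if $T$ is geometric \cite{CH12}, and if both $T_+$ and $T_-$ were geometric then $\phi$ would be induced by a surface homeomorphism \cite{Gui3,CH12}, contradicting atoroidality. Note also that your appeal to ``the non-geometricity of $T_\pm(\phi)$'' is not available: for a parageometric iwip one of the two trees \emph{is} geometric; only the simultaneous geometricity of both is excluded.

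Second, your proof of (1) rests on the claim that ``Coulbois--Hilion index theory identifies any $\mathcal Q_\pm$-fiber of size $\ge 3$ with the set of attracting fixed points of an automorphism $\mathrm{Inn}(w)\circ\phi^k$.'' No such statement appears in, or is needed by, this paper, and it is much stronger than the index inequality you actually use; it cannot be cited as a black box. Your signs are also reversed: preimages lying in $L(T_-)$ (the $\phi$-type case) correspond to positive $t$-exponent, because positive powers of $t$ act on $T_-$ as contracting homotheties (Proposition~\ref{double-positive}). The paper avoids your black box altogether: by Theorem~\ref{thm:types} there are only finitely many $F_N$-orbits of singular points, $\phi$ permutes these orbits, so each is $\phi$-periodic, and $\phi$-periodicity of $[S]_{F_N}$ is equivalent to rationality $S=(wt^m)^\infty$ with $m\ne 0$ (see the discussion before Definition~\ref{degree} and Theorem~\ref{thm:mainresult}~(3)). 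Finally, your lower bound $\sigma\ge 2$ is not justified as written: non-triviality of $L(T_\pm)$ only produces fibers of size $\ge 2$, i.e.\ regular points. To get a fiber of size $\ge 3$ one needs a singular leaf (two leaves sharing a half-leaf), which exists because $L_{BFH}(\phi^{\pm 1})$ is minimal and non-rational (Remark~\ref{laminations-folk}~(1), as used in Remark~\ref{lem:2sing}); and the distinctness of the two resulting $F_N$-orbits follows more directly from Proposition~\ref{prop:disj} (the two types are mutually exclusive and $F_N$-invariant) than from your sign argument, which as stated would still need a justification that elements of opposite $t$-exponent cannot share an attracting fixed point.
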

Theorem~\ref{thm:B} implies that for every singular $S\in\partial G_\phi$ there exists a unique $g\in G_\phi$ such that $g$ is not a proper power and such that $g^\infty=S$; moreover, there are $\le 4N-5$ conjugacy classes of $g\in G$ with these properties.

We next summarize, in a simplified form, the remaining results (obtained in Section~\ref{sec:main-results}) about fibers of $\hat \iota$ for $G_\phi$.

\begin{theor}\label{thm:C}
Let 
$\phi \in \Out$ be a hyperbolic iwip and let $G_\phi$ be its mapping torus group.
Then the following hold:
\begin{enumerate}
\item Let $g=wt^m\in G_\phi$ where $w\in F_N$ and $m\ne 0$.
Then
\[
\deg(g^\infty)+ \deg( g^{-\infty})\le 4N-1.
\]
\item If $w\in F_N, w\ne 1$ then the point $w^\infty\in \partial G_\phi$ is simple.
\item There are uncountably many $G_\phi$-orbits of simple points in $\partial G_\phi$. (Since there are only countably many rational points in $\partial G_\phi$, this also implies that there are  uncountably many $G_\phi$-orbits of irrational simple points in $\partial G_\phi$.)
\item There are uncountably many $G_\phi$-orbits of regular points in $\partial G_\phi$. (Again, this also implies that there are uncountably many $G_\phi$-orbits of irrational regular points in $\partial G_\phi$).
\end{enumerate}

\end{theor}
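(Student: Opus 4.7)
The plan is to derive the four statements from the machinery already in place: Theorem~\ref{thm:B} on singular orbits, Corollary~\ref{cor:step1} identifying non-simple pairs with leaves of $L(T_-(\phi))\cup L(T_+(\phi))$, and the description of these laminations via the $\CQ$-map combined with the $\CQ$-index bound of~\cite{CH10}.

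For part~(1), the essential point is that $g^\infty$ and $g^{-\infty}$ lie in distinct $\FN$-orbits in $\partial G_\phi$. Suppose for contradiction that $fg^\infty=g^{-\infty}$ for some $f\in \FN$. Then $g$ and $f^{-1}gf$ share the fixed point $g^\infty$, so by hyperbolicity of $G_\phi$ they share a common power, yielding $fg^af^{-1}=g^{\pm a}$ for some $a\ne 0$. In either sub-case $f^2$ commutes with $g^a$, placing $f^2$ in the (necessarily cyclic) centralizer of $g^a$ in the torsion-free hyperbolic group $G_\phi$; since $g=wt^m$ with $m\ne 0$ has no nonzero power in $\FN$, the only element of this centralizer in $\FN$ is the identity, forcing $f^2=1$ and hence $f=1$, a contradiction. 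Once the two $\FN$-orbits are known to differ, Theorem~\ref{thm:B}(3) gives $(\deg(g^\infty)-2)+(\deg(g^{-\infty})-2)\le 4N-5$ when both points are singular; the remaining cases (one or neither singular) are handled by direct bookkeeping and yield even smaller sums, giving $\deg(g^\infty)+\deg(g^{-\infty})\le 4N-1$ in all cases.

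For part~(2), Theorem~\ref{thm:B}(1) already rules out $\deg(w^\infty)\ge 3$, so it remains to show $\deg(w^\infty)\ne 2$. Suppose $X\in\partial\FN\setminus\{w^\infty\}$ satisfies $\hat\iota(X)=\hat\iota(w^\infty)$; then Corollary~\ref{cor:step1} places $(w^\infty,X)$ in $L(T_-(\phi))\cup L(T_+(\phi))$, so $w^\infty$ and $X$ share a $\CQ$-image in one of $\wbar{T}_\pm(\phi)$. Since $\FN$ acts freely on $T_\pm(\phi)$ for atoroidal iwip $\phi$, $\CQ(w^\infty)$ is an end of $\wbar{T}_\pm(\phi)$ fixed by $w$ (namely an endpoint of $w$'s axis), and one verifies, using the structure of the lamination for an iwip, that no leaf of $L(T_\pm(\phi))$ can have such a rational endpoint. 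This contradiction forces $\deg(w^\infty)=1$.

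For parts~(3) and~(4) I apply a cardinality argument. By the $\CQ$-index theorem of~\cite{CH10}, each tree $T_\pm(\phi)$ has only finitely many $\FN$-orbits of branch points, and hence only countably many $\CQ$-fibers of cardinality $\ge 2$, each of size $\le 2N$ by Theorem~\ref{thm:A}. Let $B\subset\partial\FN$ denote the resulting countable set of boundary points lying in some non-singleton $\CQ_+$- or $\CQ_-$-fiber. For~(3), $\hat\iota$ restricted to the uncountable set $\partial\FN\setminus B$ is injective and lands in the simple points of $\partial G_\phi$; dividing by the countable group $G_\phi$ and subtracting the countable rational set yields uncountably many irrational simple $G_\phi$-orbits. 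For~(4), each of $L(T_\pm(\phi))$ contains uncountably many leaves, and only countably many of them meet $B$; for every other leaf $(X,Y)$ the full preimage $\hat\iota^{-1}(\hat\iota(X))=\{X,Y\}$ shows $\hat\iota(X)$ is regular, and the same countability argument concludes~(4). The main technical obstacle is part~(2), where one must carefully exploit the iwip/atoroidal dynamics on $T_\pm(\phi)$ to exclude rational points from non-singleton $\CQ$-fibers.
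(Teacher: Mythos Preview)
Your arguments for parts~(1) and~(2) are correct, though they differ from the paper's. For~(1) the paper does not argue directly that $g^\infty$ and $g^{-\infty}$ lie in distinct $\FN$-orbits via centralizers; instead it observes (Proposition~\ref{double-positive}) that a singular point of $\phi$-type is $(wt^m)^\infty$ with $m\ge 1$ while one of $\phi^{-1}$-type has $m\le -1$, so $g^\infty$ and $g^{-\infty}$ have opposite types and hence lie in distinct $G_\phi$-orbits. Your centralizer argument is a legitimate alternative. For~(2) the paper's proof (Proposition~\ref{rational-simple}) is a one-line consequence of Corollary~\ref{cor:step1} together with the fact that the half-leaves of $L(T_\pm)$ coincide with those of the minimal non-rational lamination $L_{BFH}(\phi^{\mp 1})$; your $\CQ$-map argument (that $\CQ_\pm(w^\infty)$ is $w$-fixed and hence lies in $\partial T_\pm$, so its fiber is a singleton by Proposition~\ref{prop:Q}(3)) is a valid and in fact more explicit route.

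There is, however, a genuine gap in your treatment of~(3) and~(4). You assert that the set $B$ of boundary points lying in some non-singleton $\CQ_+$- or $\CQ_-$-fiber is countable, inferring this from the finiteness of $\FN$-orbits of branch points. This inference is false: $B$ is precisely the set $L(T_+)^1\cup L(T_-)^1$ of half-leaves of the dual laminations, and since each $L(T_\pm)$ contains the minimal non-rational lamination $L_{BFH}(\phi^{\mp 1})$, the set $B$ is uncountable (Remark~\ref{laminations-folk}(2)). The $\CQ$-index theorem bounds only the orbits with fiber size $\ge 3$, not $\ge 2$. Your own argument for~(4) is internally inconsistent on this point: you invoke uncountably many leaves of $L(T_\pm)$, yet every such leaf has both endpoints in $B$, so ``only countably many of them meet $B$'' cannot hold.

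The paper handles~(3) by showing that the complement $\partial\FN\setminus(L(T_+)^1\cup L(T_-)^1)$ is uncountable, using that both half-leaf sets are those of minimal laminations and applying the general fact (Remark~\ref{laminations-folk}(3)) that any finite collection of minimal laminations misses uncountably many boundary points. For~(4) the paper argues that $L(T_\pm)$ has uncountably many $\FN$-orbits of leaves, while only finitely many $\FN$-orbits of points in $\partial G_\phi$ are singular, each of bounded degree; hence uncountably many leaves must map to regular points. To repair your proof you should replace the countability claim on $B$ by these two arguments.
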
 

The results of this paper, together with the results of Dowdall, Kapovich and Leininger in \cite{DKL}, indicate that there is a possible interesting relationship between the Cannon-Thurston maps corresponding to different ways in which a given hyperbolic free-by-cyclic group $G_\phi$ splits as the mapping torus group of a free group automorhism.

Finally, we'd like to note that analogues and relatives of the Cannon-Thurston map have also been investigated in other contexts arising in the study of hyperbolic 3-manifolds and mapping class groups (e.g. see~\cite{Bow02,Bow07,Kla,LMS,McM,Min94,M09}), of relatively hyperbolic groups~\cite{Ger10,GP09,GP10,GP11,MP11}, and of the dynamics of complex polynomials (e.g. see~\cite{Kam03,Milnor04,Me09,Shi00}).

\medskip
\noindent

\begin{acknowledgements}\label{ackref}
The first author thanks Arnaud Hilion and Thierry Coulbois for useful
discussions regarding the $\mathcal Q$-index. The authors also thank
the refeee for useful suggestions.
\end{acknowledgements}

\section{Preliminraies}
\label{sect:outer}

\subsection{Iwip automorphisms of $\FN$}
\label{iwip-autos}

${}^{}$

Throughout this paper $\FN$ denotes the non-abelian free group of finite rank $N \geq 2$.  An automorphism $\Phi \in \Aut(\FN)$, or its associated outer automorphism $\phi \in \Out(\FN)$, is called 
{\em fully irreducible} or 
{\em iwip} (for {\em irreducible with irreducible powers}) if there is no non-trivial proper free factor of $\FN$ which is mapped by any positive power of $\Phi$ to a conjugate of itself. 

It follows directly that any such $\phi$ has infinite order, and any positive or negative power of $\phi$ is also iwip.

\smallskip

For any automorphism $\Phi: \FN \to \FN$ the semi-direct product
\[G_\Phi=\FN\rtimes_\Phi \langle t_\Phi\rangle=\langle \FN, t_\Phi \mid t_\Phi wt_\Phi^{-1}=\Phi(w)  {\rm \,\, for\,\, any\,\,} w\in \FN\rangle \tag{$\clubsuit$}
\]
is called the {\em mapping torus groups} defined by $\Phi$.  It is well known and easy to see that for any two $\Phi, \Phi' \in \Aut(\FN)$ which define the same outer automorphism $\phi \in \Out(\FN)$ one has 
$G_{\Phi'} \cong G_{\Phi}$. 
Indeed, since $\FN$ has trivial center, for $\phi \neq 1$ there is a canonical identification between $G_\Phi$ and the full preimage of the cyclic group $\langle \phi \rangle \subset \Out(\FN)$ under the quotient map $\pi: \Aut(\FN) \to \Out(\FN)$.
Hence 
we will denote 
the group $G_\Phi$ often by $G_\phi$.

The above identification $G_\phi = \pi^{-1}(\langle \phi \rangle)$ is also useful to understand the canonical extension of the $G_\phi$-action (by conjugation) on the normal subgroup $\FN \normal G_\phi$ to a $G_\phi$-action on the boundary $\partial \FN$. In particular, for any $X \in \partial \FN$ we obtain $t_\Phi(X) = \Phi(X)$.

\smallskip

\begin{rem}
\label{hyp-ator-surf}
For any iwip automorphism $\phi \in \Out(\FN)$ the equivalence of the following statements is well known 
(combined work of \cite{BF92} and \cite{Br}):

\begin{enumerate}
\item
$\phi$ is atoroidal (i.e. no positive power of $\phi$ fixes any non-trivial conjugacy class $[w] \subset \FN$).
\item
$\phi$ is not induced by a homeomorphism of a surface with boundary.
\item
The mapping torus group $G_\phi$ is word-hyperbolic.
\end{enumerate}
Note that, since any automorphism of $F_2$ is induced by a homeomorphism of the punctured torus, any iwip $\phi$, which satisfies the above three equivalent conditions, necessarily satisfies
$
N \geq 3 
$.
Furthermore, 
$\phi \in \Out(\FN)$ is a toroidal 
(= not atoroidal)
iwip if and only if $\phi^t$ is a toroidal iwip, for any integer $t \neq 0$.
\end{rem}

\subsection{The Cannon-Thurston map}
\label{CTmaps}

${}^{}$

For any atoroidal iwip $\phi \in \Out(\FN)$ the inclusion $\iota : \FN \overset{\normal}{\longrightarrow} G_\phi$ induces, by a more general result of Mitra \cite{M2}, a well defined {\em Cannon-Thurston map}
$$\hat \iota : \partial \FN \to \partial G_\phi$$
which is continuous and $\FN$-equivariant. Moreover, 
with respect to the above explained
$G_\phi$-action on $\partial \FN$, the Cannon-Thurston map $\hat \iota$ is easily seen to be actually $G_\phi$-equivariant.

From the fact that $\FN$ is 
infinite and 
normal in $G_\phi$, and hence $\hat\iota(\partial \FN)$ a non-empty and $G_\phi$-invariant subset of $\partial G_\phi$, one deduces:

\begin{prop}
\label{CT-surjectivity}
For any atoroidal iwip $\phi \in \Out(\FN)$ the Cannon-Thurston map
$\hat \iota : \partial \FN \to \partial G_\phi$ is surjective.
\qed
\end{prop}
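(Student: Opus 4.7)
The plan is to deduce surjectivity from three ingredients: (i) the image $\hat\iota(\partial\FN)$ is a closed, non-empty, $G_\phi$-invariant subset of $\partial G_\phi$, (ii) $G_\phi$ is a non-elementary word-hyperbolic group, and (iii) a non-elementary hyperbolic group acts minimally on its boundary (equivalently, the limit set of any infinite normal subgroup is the full boundary, as shown in \cite{KS96}).

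First I would observe that $\partial\FN$ is compact and $\hat\iota$ is continuous, so $\hat\iota(\partial\FN)$ is a closed subset of $\partial G_\phi$; it is non-empty because $N\ge 3$ implies $\partial\FN \neq \emptyset$. Next, using the $G_\phi$-action on $\partial\FN$ recalled just above the proposition (arising from the identification $G_\phi = \pi^{-1}(\langle \phi\rangle)$ and the induced $\Aut(\FN)$-action on $\partial\FN$), together with the $G_\phi$-equivariance of $\hat\iota$ noted in the paragraph preceding the statement, the image $\hat\iota(\partial\FN)$ is $G_\phi$-invariant.

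To finish, I would verify that $G_\phi$ is non-elementary: since $\phi$ is atoroidal and iwip (so in particular $N\ge 3$ by Remark \ref{hyp-ator-surf}), $G_\phi$ contains the non-abelian free group $\FN$, hence is not virtually cyclic, and by Remark \ref{hyp-ator-surf} it is word-hyperbolic. For any non-elementary word-hyperbolic group $G$, every orbit of the $G$-action on $\partial G$ is dense, so every non-empty closed $G$-invariant subset of $\partial G$ coincides with $\partial G$. Applying this to the closed non-empty $G_\phi$-invariant set $\hat\iota(\partial\FN)\subseteq \partial G_\phi$ yields $\hat\iota(\partial\FN)=\partial G_\phi$.

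There is no real obstacle; the only points requiring care are the verification of $G_\phi$-equivariance (as opposed to merely $\FN$-equivariance) of $\hat\iota$ and the confirmation that $G_\phi$ is non-elementary. Alternatively, the same conclusion follows directly from the Kapovich--Short result \cite{KS96} quoted earlier in the Introduction: the limit set in $\partial G_\phi$ of the infinite normal subgroup $\FN \normal G_\phi$ equals $\partial G_\phi$, and this limit set is visibly contained in (in fact equal to) $\hat\iota(\partial \FN)$.
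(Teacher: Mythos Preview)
Your proposal is correct and follows essentially the same approach as the paper. The paper's ``proof'' consists solely of the sentence immediately preceding the proposition (``From the fact that $\FN$ is infinite and normal in $G_\phi$, and hence $\hat\iota(\partial \FN)$ a non-empty and $G_\phi$-invariant subset of $\partial G_\phi$, one deduces:'') together with the \verb|\qed|; you have simply made explicit the ingredients (closedness of the image, non-elementarity of $G_\phi$, minimality of the boundary action or equivalently the result from \cite{KS96}) that the paper leaves implicit.
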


\subsection{$\R$-trees and iwip automorphisms}
\label{trees-graphs}

${}^{}$

$\R$-trees $T$ with isometric $\FN$-action have become the object of much research in the past 30 years; one usually assumes that the tree $T$ is {\em minimal} (i.e. there is no $\FN$-invariant proper subtree in $T$). 
The group $\Out(\FN)$ acts properly discontinuously on {\em Outer space} $\CVN$, which consists of projective classes $[T]$ of such $\R$-trees $T$, with the additional specifications that the $\FN$-action on $T$ is free and 
discrete.
The action of $\Out(\FN)$ extends to the compactification $\CVNbar$, which still consists of projective classes of $\R$-tree actions, but without the last two specifications.

More specifically, the space $\CVNbar$ is the quotient of the ``unprojectivized'' space $\cvnbar$ of {\em very small} $\R$-trees $T$.
Every $T\in \cvnbar$ is uniquely
determined by its \emph{translation length function}:
$$||\cdot||_T:F_N\to\mathbb R, \,\,\,
||g||_T := \inf_{x \in T} d(x, gx) $$
Two trees $T_1,T_2\in \cvnbar$ are close if the functions
$||\cdot||_{T_1}$ and $||\cdot||_{T_1}$ are pointwise close on a large ball in
$F_N$. 
For more details see \cite{CV,Gui,Gui1,Vog}.
A tree $T \in \cvnbar$ is said to 
have {\em dense orbits} if the $\FN$-orbit of some (or equivalently, of any) $x \in T$ is dense in $T$.

\smallskip

For any $\R$-tree $T$ we denote by $\bar T$ its metric completion, and by $\partial T$ its Gromov boundary. The $\FN$-action on $T$ extends canonically 
the union $\hat T := \bar T \cup \partial T$. In \cite{CHL} a slight weakening of the metric topology on $\hat T$ has been introduced, the so-called {\em oberservers' topology}; on any segment $[x, y] \subset T$ the two topologies agree.

\begin{prop}~\cite{LL,CHL}
\label{Q-map}
Let $T \in \cvnbar$ be an $\R$-tree dense orbits.  

\smallskip
\noindent
(1) Then there exists a surjective $\FN$-equivariant map $\mathcal Q: \partial \FN \to \hat T$ which is continuous with respect to the observers' topology on $\hat T$ (but in general not with respect to the metric topology).

\smallskip
\noindent
(2) Furthermore, for any $P \in T$ the map $\mathcal Q$ 
arises from extending continuously
(with respect to the observers' topology) 
the map $\mathcal Q_{P}: \FN \to T, \,\,w \mapsto wP$,
and as such $\cal Q$ is unique. 
\qed
\end{prop}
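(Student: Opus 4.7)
The plan is to fix any basepoint $P\in T$, observe that the orbit map $\mathcal Q_P\colon \FN\to T$, $w\mapsto wP$, is already $\FN$-equivariant, and extend it by continuity to $\partial \FN$. Since the observers' topology on $\hat T$ is Hausdorff and $\FN$ is dense in $\FN\cup\partial \FN$, any continuous extension is unique; this yields the uniqueness assertion of (2) as soon as existence is established, and it also implies independence of the choice of $P$, since for two basepoints $P,P'$ one has $d(w_n P, w_n P')=d(P,P')$ for all $n$, so both sequences are forced to share the same observers' limit.

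The main step is to show: for $X\in \partial \FN$ represented by a reduced infinite word with length-$n$ prefixes $w_n$, the sequence $(w_n P)_n$ converges in $\hat T$ in the observers' topology to a limit depending only on $X$. Recall that the observers' topology on $\hat T$ is generated by ``half-trees'': for each $z\in T$ and each connected component $C$ of $T\smsm\{z\}$, the set $\bar C\cap \hat T$ is open. Hence $w_n P\to Y$ in this topology means: for every $z\in T$ and every component $C$ of $T\smsm\{z\}$ with $Y\in \bar C\cup\{z\}$, eventually $w_n P\in \bar C\cup\{z\}$. Using the density of $\FN$-orbits in $T$, one may approximate $z$ by an orbit point $uP$ and translate by $u^{-1}$; the question then reduces to determining the initial direction at $P$ of the segment $[P,u^{-1}w_n P]$, which (by a standard bridge argument in $\R$-trees combined with the explicit $\FN$-action) is encoded by the first letter of the freely reduced form of $u^{-1}w_n$. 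Because $X$ is a reduced infinite word, for each fixed $u$ this first letter stabilizes for all sufficiently large $n$, producing a coherent choice of half-tree across all test observers $z$ and so defining $\mathcal Q(X)\in\hat T$.

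Continuity of $\mathcal Q$ is then a routine diagonal argument from the pointwise convergence just established, and $\FN$-equivariance follows from $g(w_n P)=(gw_n)P$ together with the continuity of the $g$-action on $\hat T$ in the observers' topology. For surjectivity, the image $\mathcal Q(\partial \FN)$ is a non-empty, closed, $\FN$-invariant subset of $\hat T$; the density of $\FN$-orbits in $T$ together with the fact that the observers' and metric topologies agree on every segment forces this image to contain all of $\bar T$, and the fixed points in $\partial T$ of hyperbolic elements of $\FN$ acting on $T$ (together with their density in $\partial T$) supply the remaining points. The main obstacle is the convergence argument of the middle paragraph: the observers' topology is strictly weaker than the metric topology on $\hat T$, so classical Gromov-product arguments do not apply directly, and one must exploit the dense-orbits hypothesis to reduce each test observer to a computation in the freely reduced word representing $X$, which is precisely where the work of \cite{LL} and its refinement in \cite{CHL} come in.
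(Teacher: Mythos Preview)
The paper does not give its own proof of this proposition: it is quoted from \cite{LL,CHL} and closed immediately with a \qed, so there is no in-paper argument to compare your sketch against.

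Your sketch has a genuine gap at the heart of the convergence step. You assert that after translating by $u^{-1}$ the problem ``reduces to determining the initial direction at $P$ of the segment $[P,u^{-1}w_nP]$, which \ldots\ is encoded by the first letter of the freely reduced form of $u^{-1}w_n$''. That is true in the Cayley tree of $\FN$, but it fails in a general $T\in\cvnbar$ with dense orbits: the action is non-simplicial, distinct generators can move $P$ into the same component of $T\smallsetminus\{P\}$, and there is simply no dictionary from letters of a basis to directions at a point of $T$. Moreover, replacing the test point $z$ by a nearby orbit point $uP$ does not control components of $T\smallsetminus\{z\}$ via components of $T\smallsetminus\{uP\}$. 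The actual argument in \cite{LL} (refined in \cite{CHL}) uses the bounded-backtracking (BBT) constant of the orbit map from the Cayley tree to $T$ to show that the path $P,\,w_1P,\,w_2P,\ldots$ eventually enters and stays in each prescribed half-tree; without that quantitative ingredient the existence of the observers'-limit $\mathcal Q(X)$ is not established. Your surjectivity paragraph has a smaller but real gap as well: a closed $\FN$-invariant subset of $\hat T$ contains $\bar T$ only once you know it meets $\bar T$ somewhere, and you have not produced any point of $\bar T$ in $\mathcal Q(\partial\FN)$; in \cite{LL,CHL} this is obtained from the continuity of the full extension $\FN\cup\partial\FN\to\hat T$ together with dense orbits (or, equivalently, from the $L(T)$ description), not from an abstract invariance argument.
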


Any iwip $\phi \in \Out(\FN)$ acts on $\CVNbar$ with locally uniform North-South dynamics (see \cite{LL}), and the two projectively fixed trees on the {\em Thurston boundary} $\partial \CVN := \CVNbar \smallsetminus \CVN$, called  $T_+ = T_+(\phi)$ and $T_- = T_-(\phi)$ both have the property that the $\FN$-action is free, and that they have dense orbits.

\smallskip

The fact that both $T_+$ and $T_-$ are projectively fixed by $\phi$ translates, 
for any lift $\Phi \in \Aut(\FN)$ of $\phi$, 
into the existence of  homotheties $H_+: T_+ \to T_+$ and $H_-: T_- \to T_-$ with stretching factors $\lambda_+ > 1$ and $\frac{1}{\lambda_-} < 1$ respectively, which {\em realize} 
$\Phi$ in the following sense:

For any $w \in \FN$ and any $x \in T_\delta$ (for $\delta = +$ or $\delta = -$) one has $H_ \delta(w x) = \Phi(w) H_ \delta(x)$, or equivalently
$$ \Phi(w) = H_ \delta w H_ \delta ^{-1}: T_ \delta \to T_ \delta\, .$$
In this case, the action of $\FN$ on $T_\delta$ by isometries extends canonically to an action of $G_\phi$ by homotheties, by defining $t_\Phi x = H_ \delta(x)$ for any $x \in T_ \delta $. 
As above for the $\FN$-action, the $G_\phi$-action too extends naturally to $\hat T_ \delta $. 
Part (2) of Proposition \ref{Q-map} implies directly the following:

\begin{prop}
\label{G-equivariance}
For any atoroidal iwip $\phi \in \Out(\FN)$ 
the two maps $\mathcal Q_+: \partial \FN \to \hat T_+$ and $\mathcal Q_-: \partial \FN \to \hat T_-$ are $G_\phi$-equivariant.
\qed
\end{prop}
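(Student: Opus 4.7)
The plan is to apply part (2) of Proposition~\ref{Q-map} directly. Since $G_\phi = \FN \rtimes \langle t_\Phi\rangle$ and the maps $\cal Q_\pm$ are already $\FN$-equivariant by Proposition~\ref{Q-map}(1), it will suffice to verify equivariance under the single generator $t_\Phi$. Recalling that $t_\Phi$ acts on $\partial \FN$ via the boundary extension of $\Phi$ and on $\hat T_\delta$ via the homothety $H_\delta$, for each $\delta \in \{+,-\}$ what one has to establish is the identity
\[
\cal Q_\delta\bigl(\Phi(X)\bigr) \,=\, H_\delta\bigl(\cal Q_\delta(X)\bigr) \qquad \text{for all } X \in \partial \FN.
\]

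I would fix $\delta$, a point $P \in T_\delta$, and set $P' := H_\delta^{-1}(P) \in T_\delta$. By Proposition~\ref{Q-map}(2), $\cal Q_\delta$ is characterized uniquely as the continuous (in the observers' topology) $\FN$-equivariant extension to $\partial \FN$ of the map $\cal Q_{P'}: \FN \to T_\delta$, $w \mapsto wP'$. The idea is to exhibit a second such extension of the same map and then invoke uniqueness. To this end I would define
\[
\psi : \FN \cup \partial \FN \,\longrightarrow\, \hat T_\delta, \qquad \psi(g) \,:=\, H_\delta^{-1}\bigl(\cal Q_\delta(\Phi(g))\bigr),
\]
where on $\FN$ the map $\cal Q_\delta$ is taken in its incarnation as $w \mapsto wP$. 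A direct computation using the defining relation $\Phi(w) = H_\delta \, w \, H_\delta^{-1}$ on $T_\delta$ gives, for $w \in \FN$,
\[
\psi(w) \,=\, H_\delta^{-1}\bigl(\Phi(w) P\bigr) \,=\, H_\delta^{-1}\bigl(H_\delta \, w \, H_\delta^{-1} P\bigr) \,=\, w H_\delta^{-1}(P) \,=\, w P',
\]
so $\psi|_{\FN}$ coincides with $\cal Q_{P'}$.

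It would then remain to check that $\psi$ is continuous in the observers' topology on $\hat T_\delta$; this should follow from the facts that $\Phi$ extends to a homeomorphism of $\FN \cup \partial \FN$, that $\cal Q_\delta$ is continuous in the observers' topology by Proposition~\ref{Q-map}, and that the homothety $H_\delta^{-1}$ is a homeomorphism of $\hat T_\delta$ in the observers' topology (as a bijective scaling it preserves the subbasis of half-tree complements defining that topology). With continuity in hand, the uniqueness clause of Proposition~\ref{Q-map}(2) applied with base point $P'$ would force $\psi|_{\partial \FN} = \cal Q_\delta$, which is exactly the required identity.

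The sole point requiring care, and the step I expect to be the main (indeed essentially only) obstacle, is the continuity of $\psi$ in the observers' topology, since this topology is strictly weaker than the metric topology on $\hat T_\delta$ and the $\cal Q$-map itself is typically not continuous with respect to the metric topology. Once this continuity is established, the argument collapses into the uniqueness clause of Proposition~\ref{Q-map}(2) together with the conjugation relation realizing $\Phi$ on $T_\delta$.
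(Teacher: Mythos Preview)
Your proposal is correct and is precisely the unpacking of what the paper means when it says that ``Part (2) of Proposition~\ref{Q-map} implies directly'' the result: the paper gives no further argument beyond that sentence, and your use of the uniqueness clause of Proposition~\ref{Q-map}(2) together with the conjugation relation $\Phi(w) = H_\delta w H_\delta^{-1}$ is exactly the intended mechanism. The continuity of $H_\delta^{-1}$ in the observers' topology that you flag as the only delicate point is indeed routine, since a homothety preserves the betweenness relation on $\hat T_\delta$ and hence permutes the subbasic half-trees defining that topology.
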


\subsection{The $\mathcal Q$-index}

${}^{}$

Coulbois and Hilion introduced in  \cite{CH10} the notion of a
\emph{$\mathcal Q$-index} for $\R$-trees with isometric $\FN$-action and dense orbits.  
They first define a local $\mathcal Q$-index for any point $x \in \bar T$; their definition involves also the stabilizer in $\FN$ of $x$. Since we are here only concerned with free actions, we restrict ourselves to this case, which simplifies 
things 
considerably. In this case their definition amounts to:
$$\ind(x) := \card (\mathcal Q^{-1}(x)) - 2$$

Since the map $\mathcal Q$ is $\FN$-invariant, the $\mathcal Q$-index is an invariant of the $\FN$-orbit $[x]_{F_N}$ of $x$, so that the term $\ind([x]_{F_N}) := \ind(x)$ is well defined. The summation over the $\FN$-orbits with non-negative index gives the $\mathcal Q$-index of $T$; however, it should be pointed out that the summation has to be taken over all $\FN$-orbits in the metric completion $\bar T$ of $T$ and not just in $T$.

\begin{defn}
Let $T$
be an $\R$-tree with isometric $\FN$-action which 
is free and has dense orbits. 
The \emph{$\mathcal Q$-index} 
of $T$ is defined as follows:
\[
\ind(T):=\sum_{[x]_{F_N} \in \bar T/\FN}
\max\{0,\ind ([x]_{F_N})\}. 
\]
\end{defn}

The following important general fact was recently established by Coulbois and
Hilion in~\cite{CH10}.

\begin{prop}\label{prop:Q-ind}
Let $T\in \cvnbar$ be a tree $T$ with dense orbits.
Then one has:
$$\ind(T)\le 2N-2$$
\end{prop}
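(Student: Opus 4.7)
The plan is to bound the $\mathcal Q$-index by the Gaboriau--Levitt \emph{geometric} index $\gind(T)$, and then invoke the classical inequality $\gind(T)\le 2N-2$. Here $\gind(T)$ is defined as a sum over $\FN$-orbits of branch points in $\bar T$, weighted by valence excess; any stabilizer contribution vanishes in our setting, since the $\FN$-action is free.

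The first, and main, step is to show that for every $x\in \bar T$ one has
\[
\#\cal Q^{-1}(x)\le v_{\mathrm{obs}}(x),
\]
where $v_{\mathrm{obs}}(x)$ denotes the number of connected components of $\hat T\smallsetminus\{x\}$ in the observers' topology. For this I would exploit the uniqueness part of Proposition~\ref{Q-map}(2): $\cal Q$ is the unique observers-continuous extension of the orbit map $\cal Q_P:\FN\to T$, $w\mapsto wP$. Hence if $X_1,\dots,X_k\in\cal Q^{-1}(x)$ are pairwise distinct, then for each $i$ any sequence $w^{(i)}_n\to X_i$ in $\FN$ satisfies $w^{(i)}_n P\to x$ in $\bar T$, and the $X_i$ can be separated in $\partial \FN$ only if these approaches occur from pairwise distinct observers-components of $\hat T\smallsetminus\{x\}$; otherwise observers-continuity would force two of the $X_i$ to coincide. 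Summing the local inequality over $\FN$-orbits and passing to excesses, and noting that $\max\{0,v_{\mathrm{obs}}(x)-2\}$ is precisely the local contribution to $\gind(T)$, yields $\ind(T)\le \gind(T)$.

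The second step is the Gaboriau--Levitt bound $\gind(T)\le 2N-2$, which is essentially an Euler-characteristic estimate. Because $T$ is very small with free and dense orbits, $\bar T$ admits a compact ``heart'' $K$ meeting every $\FN$-orbit of branch points with positive index, equipped with a finite system of partial isometries encoding the $\FN$-action. This finite combinatorial object behaves like a graph of groups with trivial vertex groups and fundamental group $\FN$, and the classical identity
\[
\sum_{v}(\val(v)-2) = -2\chi = 2(N-1)
\]
then delivers the bound. The principal obstacle lies in the first step: since $T$ has dense orbits, branch points accumulate everywhere in $\bar T$ and the metric valence of $x$ is poorly behaved, so the use of the observers' topology in Proposition~\ref{Q-map} is essential to translate preimage multiplicity of $\cal Q$ faithfully into branching excess of $\bar T$ which is then controlled by the Gaboriau--Levitt estimate.
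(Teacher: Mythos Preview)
The paper does not prove this proposition; it quotes it from Coulbois--Hilion \cite{CH10}, whose argument proceeds by Rips induction on a system of partial isometries encoding the action on a compact heart of $T$. Your proposed route is therefore not the paper's, and unfortunately it does not work.

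The fatal point is your claimed pointwise inequality $\#\cal Q^{-1}(x) \le v_{\mathrm{obs}}(x)$. First, the justification you offer is not valid: observers'-continuity of $\cal Q$ says only that nearby points of $\partial\FN$ have nearby images in $\hat T$; it imposes no constraint forcing distinct preimages of a single point $x$ to approach $x$ through distinct components of $\hat T\smallsetminus\{x\}$. There is no canonical map from $\cal Q^{-1}(x)$ to the set of directions at $x$, and continuity alone cannot manufacture one.

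Second, and decisively, the inequality is false in general. The $\cal Q$-index and the Gaboriau--Levitt geometric index are independent invariants: the classification in \cite{CH12} exhibits atoroidal iwips $\phi$ (namely those for which $\phi^{-1}$ is parageometric) whose attracting tree $T = T_+(\phi)$ satisfies $\ind(T) = 2N-2$ while $\gind(T) < 2N-2$. If your pointwise bound held for every $x \in \bar T$, summing over $\FN$-orbits would give $\ind(T) \le \gind(T)$, a contradiction. Hence for such $T$ there must exist points $x$ with strictly more $\cal Q$-preimages than directions, and your first step collapses. This is precisely why \cite{CH10} requires the full Rips machine: the $\cal Q$-index bound is not a corollary of the Gaboriau--Levitt estimate but a parallel, and substantially harder, theorem.
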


\section{Algebraic laminations}\label{sect:laminations}

\subsection{Basic facts and definitions}

${}^{}$

As before, let $\FN$ be the free group of rank $N \geq 2$. 
We denote by
\[
\partial^2 F_N:=\{(X,Y) \mid X,Y\in \partial F_N, \text{ and } X\ne Y\}
\]
the {\em double boundary} of $\FN$.
As a subspace of  $\partial F_N\times
\partial F_N$ one inherits on $\partial^2 F_N$
the induced topology.
The left translation action of $F_N$ on
$\partial F_N$ induces a natural diagonal action of $F_N$ on
$\partial^2 F_N$ by homeomorphisms.
The space $\partial^2 F_N$ comes equipped with the canonical ``flip''
map 
given by $(X,Y)\mapsto (Y,X)$ for any $(X,Y)\in \partial^2 F_N$.

\begin{defn}\label{defn:algebraic_lamination}
An \emph{algebraic lamination} 
is a closed 
$F_N$-invariant and flip-invariant subset $L\subseteq \partial^2
F_N$. 
We also require $L$ to be non-empty.

If $L\subseteq \partial^2 F_N$ is an algebraic lamination, and $L_0\subseteq L$, we say that $L_0$ is a \emph{sublamination} of $L$ if $L_0\subseteq \partial^2 F_N$ is itself an algebraic lamination.

For $X,Y\in \partial F_N$ such that $(X,Y)\in L$ we say that $(X,Y)$
is a \emph{leaf} of $L$. For $X\in \partial F_N$ we say that $X$ is a
\emph{half-leaf} of $L$ if there exists $Y\in \partial F_N$
such
that $(X,Y)\in L$.
We denote by $L^1$ the set of half-leaves of the lamination $L$.

\end{defn}

Algebraic laminations have been introduced and studied in \cite{CHL1}; some background material for the use of laminations in our context can also be found in \cite{KL6}.

Any element $w \in \FN \smallsetminus \{1\}$ defines an algebraic lamination
$$
L_w = \FN \cdot (w^\infty, w^{-\infty}) \,\cup\, \FN \cdot (w^{-\infty} , w^\infty)
$$
where we mean by $w^\infty \in \partial \FN$ the 
limit of the elements $w^k$ for $k \to \infty$.
Clearly, the {\em rational} lamination $L_w$ depends only on the conjugacy class $[w] \subset \FN$ of $w$.

\begin{rem}
\label{basis-approach}
(1)
Whenever one fixes a basis $\mathcal A$ of the free group $\FN$ one obtains a canonical identification between the group $\FN$ and the set $F(\mathcal A)$ of reduced words in $\mathcal A \cup \mathcal A^{-1}$, which extends to an identification between $\partial \FN$ and the set of infinite reduced words $\partial F(\mathcal A)$. When working with laminations, the combinatorial objects from $F(\mathcal A)$ or $\partial F(\mathcal A)$ have many advantages and are often simply more concrete to work with; however, a basis free approach has the advantage of greater conceptual clarity.  In the sequel we will freely pass from one viewpoint to the other, as the transition is indeed canonical. 

\smallskip
\noindent
(2)
For example, the above defined point 
$w^\infty \in \partial \FN$ corresponds to the reduced infinite eventually periodic word which is obtained from reducing the infinite periodic word $w w w \ldots$.

\smallskip
\noindent
(3)
Similarly, the combinatorial object corresponding to a pair $(X, Y) \in \partial^2\FN$ is the biinfinite reduced word 
$Y_\mathcal A^{-1} X_\mathcal A$, where $X_\mathcal A$ and $Y_\mathcal A$ are the reduced infinite words in $\mathcal A^{\pm 1}$ that represent $X$ and $Y$ respectively, $Y_\mathcal A^{-1}$ is the left-infinite reduced word obtained by ``inverting'' $Y_\mathcal A$, and $Y_\mathcal A^{-1} X_\mathcal A$ is obtained from the ``product'' $Y_\mathcal A^{-1} \cdot X_\mathcal A$ by reduction at the multiplication locus.
\end{rem}

\begin{defn}
\label{lam-generated}
(a)
For any infinite set $\Omega$ of conjugacy classes $[w_i] \in \Omega$ the lamination $L(\Omega)$ {\em generated} by $\Omega$ is given as the set of accumulation points of 
the union of all $L_{w_i}$ (where ``accumulation points'' is meant in the classical meaning for a subset of a topological space).

Alternatively, for any fixed basis $\mathcal A$ of $\FN$, the lamination $L(\Omega)$ consists precisely of all leaves $(X, Y)$ such that any finite subword of the reduced biinfinite word $Y_\mathcal A^{-1} X_\mathcal A$ is also a subword of one of the reduced cyclic words $\hat w_i$ which represent  $[w_i] \in \Omega$, or of $\hat w_i^{-1}$.

\smallskip
\noindent
(b)
Similarly, for any boundary point $X \in \partial \FN$ we define the lamination $L(X)$ {\em generated by $X$} 
as the intersection of all laminations $L(\Omega)$, where $\Omega$ is the set of conjugacy classes that is given by any family of elements $w_k \in \FN$ which satisfy $\lim w_k = X$.

Again, for any fixed basis $\mathcal A$ of $\FN$ one can define $L(X)$ alternatively as the set
of all leaves $(Y, Z)$ such that any finite subword of the reduced biinfinite word $Z_\mathcal A^{-1} Y_\mathcal A$ is also a subword of $X_\mathcal A$ or of $X_\mathcal A^{-1}$.

\end{defn}

\begin{rem}
\label{minimal-laminations}
It follows directly that for any algebraic lamination $L$ the following are equivalent:
\begin{enumerate}
\item
$L$ is minimal with respect to the inclusion.
\item
For any leaf $(X, Y) \in L$
the union of the two orbits $\FN \cdot (X, Y) \,\cup\, \FN \cdot (Y, X)$ is dense in $L$.  
\item
Any half-leaf $X$ of $L$ generates $L$, i.e. $L = L(X)$.
\end{enumerate}
As a consequence we see that two minimal laminations 
$L_1, L_2$ are either equal, or else they are disjoint, with disjoint sets of half leaves:  
$$L_1^1 \cap L_2^1 = \emptyset$$
\end{rem}

\begin{rem}
\label{ends-of-L}
For any lamination $L$ a boundary point $X \in \partial \FN$ is called an {\em end} of $L$ if $X$ satisfies:
$$L(X) \subset L$$
The set of ends of $L$ is denoted by ${\rm Ends}(L)$.
We would like to warn the reader that even for a minimal lamination $L$ there exist boundary points $X \in \partial \FN$ which are ends but not half-leaves of $L$.
\end{rem}

The following statements for minimal laminations are well known in the context of symbolic dynamics; we only indicate the arguments:

\begin{rem}
\label{laminations-folk}
(1)
Every minimal lamination which is not rational contains a {\em singular} leaf, i.e.  there are two distinct leaves $(X, Z), (Y, Z) \in L$ which have a common half-leaf $Z$.

This is shown by first observing that every not eventually periodic half-leaf $X$, written as infinite reduced word $X_\mathcal A = x_1 x_2 \ldots$ in some basis $\mathcal A$ of $\FN$, contains arbitrary large ``special subwords'' $x_{k, m}:= x_k \ldots x_m$, i.e. 
there exist indices $k', m'$ with $x_{k, m} = x_{k', m'}$, 
$k \neq k'$ and $x_{m+1} \neq x_{m'+1}$. One then uses the finiteness of the set of words of any given length to find a nested sequence of such special subwords, and the fact that $L \subset \partial^2\FN$ is closed to construct the singular leaf.

\smallskip
\noindent
(2) 
The set of half-leaves of any minimal non-rational lamination $L$ is uncountable.

Again, one uses the existence of special subwords on every half-leaf and a standard diagonal argument to get uncountability.

\smallskip
\noindent
(3) 
For any finite set $L_1, \ldots L_k$ of minimal laminations there exist uncountably many points in $\partial \FN$ which are not half-leaves of any of the $L_i$.

This follows from the fact that there are infinitely many distinct minimal non-rational laminations in $\partial^2 \FN$, each of them has uncountably many half-leaves (by fact (2) above), and no two of them have a common half-leaf (by Remark \ref{minimal-laminations} (3)).
\end{rem}

\subsection{The diagonal extension}
\label{subsection-diagonal-ext}

${}^{}$

\begin{defn}[Diagonal extension]
\label{diagonal-extension}
For any subset $R\subseteq \partial^2 F_N$ the \emph{diagonal extension} of $R$,
denoted $\diag(R)$, is defined as:
\begin{gather*}
\diag(R)=\{(X,Y)\in \partial^2F_N \mid \text{there exits } 
Z_0=X, Z_1, \dots, Z_m=Y \\
\text{ such that } (Z_{i},Z_{i+1})\in R \text{ for } 0\le i\le m-1\}
\end{gather*}
If $R$ 
satisfies $R = \diag(R)$, then we say that $R$ is {\em diagonally closed}.
\end{defn}

Note that the definition of $\diag(R)$ is purely set-theoretic. In particular, for $R\subseteq \partial^2 F_N$ the set $\diag(R)$ need not be closed in $\partial^2 F_N$, and a diagonally closed subset of $\partial^2 F_N$ need not be a closed subset.
Clearly $R' \subset R$ implies $\diag(R') \subset \diag(R)$.
Using $m=1$ in the above definition we see that $R\subseteq \diag(R)$
for every $R\subseteq \partial^2 F_N$. 

\begin{lem}
\label{intersection-total}
Let $R = R_1 \cup R_2$ be the union of two sets $R_1, R_2 \subset \partial^2\FN$. Assume that $R$ is diagonally closed, and that $\diag(R_1) \cap \diag(R_2) = \emptyset$. Then both, $R_1$ and $R_2$ must also be diagonally closed.
\end{lem}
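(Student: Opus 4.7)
The statement is essentially a direct manipulation of inclusions, so the plan is short: exploit monotonicity of $\diag(\cdot)$ and the hypothesis of empty intersection of diagonal extensions.

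First I would note the two elementary properties of $\diag$ that the proof rests on. Directly from Definition \ref{diagonal-extension}, the operator $\diag$ is monotone (if $R' \subset R''$ then $\diag(R') \subset \diag(R'')$) since any chain $Z_0, \dots, Z_m$ witnessing membership in $\diag(R')$ also witnesses membership in $\diag(R'')$. Moreover, as already observed in the paragraph after the definition, $R' \subset \diag(R')$ for every subset $R'$.

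Next I would apply monotonicity to the inclusion $R_1 \subset R_1 \cup R_2 = R$, obtaining
\[
\diag(R_1) \,\subset\, \diag(R) \,=\, R \,=\, R_1 \cup R_2,
\]
where the middle equality uses the hypothesis that $R$ is diagonally closed. Now I would intersect both sides with $R_2$ and use that $R_2 \subset \diag(R_2)$ together with the hypothesis $\diag(R_1) \cap \diag(R_2) = \emptyset$:
\[
\diag(R_1) \cap R_2 \,\subset\, \diag(R_1) \cap \diag(R_2) \,=\, \emptyset.
\]
Combined with $\diag(R_1) \subset R_1 \cup R_2$, this forces $\diag(R_1) \subset R_1$. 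Together with the general inclusion $R_1 \subset \diag(R_1)$, this yields $\diag(R_1) = R_1$, i.e.\ $R_1$ is diagonally closed. The argument for $R_2$ is identical by symmetry.

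Since the logic is entirely at the level of inclusions between subsets of $\partial^2 \FN$, there is really no ``hard part'' here; the only subtlety to flag is that the argument uses $R_2 \subset \diag(R_2)$ (and symmetrically for $R_1$) in an essential way, so the disjointness hypothesis $\diag(R_1) \cap \diag(R_2) = \emptyset$ cannot be weakened to mere disjointness of $R_1$ and $R_2$.
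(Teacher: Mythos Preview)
Your proof is correct and is essentially the same argument as the paper's, just phrased via set inclusions rather than by picking an element of $\diag(R_1)\smallsetminus R_1$ and deriving a contradiction. Both arguments use monotonicity of $\diag$, the hypothesis $\diag(R)=R$, and $R_2\subset\diag(R_2)$ in exactly the same way.
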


\begin{proof}
Any leaf $(X, Y) \in \diag(R_1) \smallsetminus R_1$ must be contained in $\diag(R) = R = R_1 \cup R_2$, and hence in $R_2 \subset \diag(R_2)$. This contradicts the assumption $\diag(R_1) \cap \diag(R_2) = \emptyset$. Hence $\diag(R_1) \smallsetminus R_1$ must be empty, or equivalently:  $R_1$ is diagonally closed. By symmetry the same applies to $R_2$.
\end{proof}

Note that, a priori, if
$L\subseteq \partial^2 F_N$ is an algebraic lamination, then $\diag(L)$
need not be an algebraic lamination, since $\diag(L)$ may fail to be a
closed subset of $\partial^2 F_N$.

\begin{rem}
\label{equal-half-leaves}
If $L$ and $\diag(L)$ are both algebraic laminations, then their sets of half-leaves are equal:
$$
L^1 = \diag(L)^1
$$
This follows directly from the above definition of the diagonal extension.
\end{rem}

Let $R_1, R_2 \subset \partial^2\FN$ be two disjoint sets. Then it is quite possible that $\diag(R_1) \cap \diag(R_2)$ is non-empty.  However, it follows directly from Definition \ref{diagonal-extension} that in this case $R_1$ and $R_2$ must have a common half-leaf $X \in \partial \FN$, i.e. there must be further elements $Y, Z \in \partial \FN$ such that $(X,Y) \in R_1$ and $(X,Z) \in R_2$.

\begin{lem}
\label{disjoint-extensions}
Let $L$ and $L'$ two distinct minimal lamination over $\FN$. Then one has:
$$\diag(L) \cap \diag(L') = \emptyset.$$
\end{lem}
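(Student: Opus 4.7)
The plan is to prove Lemma \ref{disjoint-extensions} by contradiction, leveraging the fact, noted in Remark \ref{minimal-laminations}, that two distinct minimal laminations have disjoint sets of half-leaves.

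Suppose, for contradiction, that there exists a leaf $(X,Y) \in \diag(L) \cap \diag(L')$. By Definition \ref{diagonal-extension}, there exist chains $Z_0 = X, Z_1, \ldots, Z_m = Y$ with $(Z_i,Z_{i+1}) \in L$, and $Z'_0 = X, Z'_1, \ldots, Z'_{m'} = Y$ with $(Z'_i,Z'_{i+1}) \in L'$. Since $(X,Y) \in \partial^2\FN$ we have $X \neq Y$, so both chains are non-trivial ($m \geq 1$ and $m' \geq 1$). In particular, $(X, Z_1) \in L$ shows that $X$ is a half-leaf of $L$, and $(X, Z'_1) \in L'$ shows that $X$ is a half-leaf of $L'$.

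Thus $X \in L^1 \cap (L')^1$. However, by Remark \ref{minimal-laminations}, two distinct minimal laminations are either equal or have disjoint sets of half-leaves; since $L \neq L'$, we must have $L^1 \cap (L')^1 = \emptyset$, contradicting the existence of the common half-leaf $X$. Hence $\diag(L) \cap \diag(L') = \emptyset$, as claimed.

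I do not anticipate a significant obstacle here: the argument reduces entirely to the observation that any element of a diagonal extension of $R$ starts a chain in $R$, so its first coordinate lies in $R^1$, combined with the disjointness of half-leaf sets for distinct minimal laminations. The only subtle point worth checking is that the chain realizing membership in $\diag(L)$ has length at least one, which is automatic from $X \neq Y$.
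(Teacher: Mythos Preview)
Your proof is correct and follows essentially the same approach as the paper's own proof. The paper appeals to the observation made just before the lemma (that a nonempty intersection of diagonal extensions forces a common half-leaf) and then uses Remark~\ref{minimal-laminations}(3) to conclude $L = L(X) = L'$; you carry out the common-half-leaf argument explicitly via the defining chains and invoke the disjointness of half-leaf sets stated at the end of Remark~\ref{minimal-laminations}, which is logically the same route.
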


\begin{proof}
Since $L$ 
is minimal, it follows (see Remark \ref{minimal-laminations} (3)) that 
laminations are either equal or disjoint.  Furthermore, 
for any half-leaf $X$
of $L$ the lamination $L(X)$ generated by $X$ is 
equal to $L$. The same is true for $L'$.  

We observed above that $\diag(L)$ and $\diag(L')$ are either disjoint, or else $L$ and $L'$ have a common half-leaf $X \in \FN$. Thus we obtain that $\diag(L) \cap \diag(L') \neq \emptyset$ implies $L = L(X) = L'$.
\end{proof}

\begin{rem}
\label{preimage-lamination}
The following assertions are direct consequences of the above definitions.

\smallskip
\noindent
(a)
Let $B$ be any set and
let $j: \partial \FN \to B$ be any map. Then the set 
$$L(j) := \{ (X, Y) \in \partial^2\FN \mid j(X) = j(Y) \}$$
is diagonally closed, that is, $L(j)=\diag(L(j))$. Note that this is a set-theoretic fact, and the set $B$ need not be a topological space here, and the map $j$ need not be continuous.

\smallskip
\noindent
(b) If $B$ is a topological space endowed with an $F_N$-action by homeomorphisms, and if  $j: \partial \FN \to B$ is a continuous $F_N$-equivariant map which is not injective, then the set 
$L(j)\subseteq \partial^2 F_N$ is an algebraic lamination (which is diagonally closed). 
\end{rem}

\subsection{The dual lamination of an $\R$-tree}

${}^{}$

In~\cite{CHL2}
the ``dual'' or ``zero'' lamination $L(T)$ of an $\R$-tree $T$
has been defined and investigated:

\begin{defn}
\label{L2}
Consider any $\R$-tree $T \in\cvnbar$.
\begin{enumerate}
\item
For any $\epsilon > 0$ let $\Omega_\epsilon(T)$ be the set of conjugacy classes 
$[w] \subset \FN \smallsetminus \{ 1 \}$ with translation length $|| w ||_T \leq \epsilon$, and let $L_\epsilon(T)=: L(\Omega_\epsilon(T))$ (for the notation see Definition \ref{lam-generated} (a)).
\item
 Define $L(T) := {\underset{\epsilon > 0}{\bigcap}} L_\epsilon(T)$.
\end{enumerate}
\end{defn}

For the reader who prefers the ``hands on'' combinatorial approach through fixing a base $\mathcal A$ of $\FN$, the lamination $L(T)$ can be described alternatively as the set of leaves $(X, Y) \in \partial^2\FN$ which have the property that for any $\epsilon > 0$ and any finite subword $v$ of the reduced biinfinite word $Y_\mathcal A^{-1} X_\mathcal A$ (compare Remark \ref{basis-approach} (3)) there is an element $w \in \FN$ with translation length $||w||_T \leq \epsilon$ such that the corresponding cyclically reduced cyclic word $\hat w$ contains $v$ as subword.

\begin{lem}
\label{subset-dual-lam}
Let $T \in \cvnbar$, and let $\Omega$ be an infinite set of conjugacy classes $[w_i] \subset \FN$ with the property that $\underset{ i \to \infty}{\lim}||w_i ||_T = 0$. Then the lamination generated by $\Omega$ satisfies:
$$L(\Omega) \subset L(T)$$
\end{lem}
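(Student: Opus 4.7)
The plan is to show that $L(\Omega) \subseteq L_\epsilon(T)$ for every $\epsilon > 0$; intersecting over $\epsilon > 0$ in Definition~\ref{L2}(2) then yields $L(\Omega) \subseteq L(T)$. So fix $\epsilon > 0$. The hypothesis $\lim_{i \to \infty}\|w_i\|_T = 0$ provides an index $N_\epsilon$ such that $\|w_i\|_T \le \epsilon$, and hence $[w_i] \in \Omega_\epsilon(T)$, for all $i \ge N_\epsilon$.

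Before exploiting this, I would establish the auxiliary fact that for any $w \in \FN \setminus \{1\}$ the rational lamination $L_w$ is both closed and discrete as a subset of $\partial^2 \FN$. The argument is standard: a non-trivially convergent sequence of distinct points $(g_n w^\infty, g_n w^{-\infty}) \to (X, Y)$ in $\partial^2 \FN$, with $g_n$ varying over distinct coset representatives of $\FN/\langle w\rangle$, forces $|g_n| \to \infty$ along some subsequence; passing to a further subsequence, $g_n$ converges to a point $G \in \partial \FN$, but then both $g_n w^\infty$ and $g_n w^{-\infty}$ converge to $G$, collapsing the pair onto the diagonal and leaving $\partial^2 \FN$, a contradiction. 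The analogous argument handles the flipped orbit.

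Now take $(X,Y) \in L(\Omega)$. By Definition~\ref{lam-generated}(a), there is a sequence $(X_n, Y_n) \in L_{w_{k_n}}$ with $(X_n, Y_n) \ne (X, Y)$ and $(X_n, Y_n) \to (X, Y)$. I claim $k_n \to \infty$. Indeed, if not, the pigeonhole principle supplies an index $j$ for which infinitely many $(X_n, Y_n)$ lie in the single set $L_{w_j}$; by the auxiliary fact, this closed discrete set must contain $(X,Y)$ as an isolated point, forcing the corresponding subsequence to be eventually equal to $(X,Y)$, contradicting $(X_n, Y_n) \ne (X, Y)$. Hence after dropping finitely many terms we may assume $k_n \ge N_\epsilon$, so $(X_n, Y_n) \in \bigcup_{[w] \in \Omega_\epsilon(T)} L_w$; this shows that $(X,Y)$ is an accumulation point of this larger union, i.e., $(X,Y) \in L(\Omega_\epsilon(T)) = L_\epsilon(T)$, as required.

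The main obstacle is the closed-discrete property of $L_w$; the remainder is a one-step pigeonhole argument combined with the definitions. The accumulation-point formulation of $L(\Omega)$ (rather than the combinatorial subword description) makes the role of the hypothesis $\|w_i\|_T \to 0$ most transparent: removing any finite initial segment of $\Omega$ does not change the set of accumulation points, precisely because each individual $L_w$ is isolated in $\partial^2\FN$.
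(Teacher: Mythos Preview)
Your proof is correct and follows the same route as the paper's: show $L(\Omega)\subseteq L_\epsilon(T)$ for every $\epsilon>0$ and then intersect. The paper's argument is a one-line sketch that simply asserts this inclusion as a direct consequence of Definition~\ref{lam-generated}(a); you have supplied the details the paper leaves implicit, namely that each rational lamination $L_w$ is closed and discrete in $\partial^2 F_N$, so that discarding the finitely many $[w_i]$ with $i<N_\epsilon$ does not alter the accumulation set. Your pigeonhole/sequence argument is a correct way to package this.

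One small remark on the auxiliary step: in your sketch that $L_w$ is closed and discrete, the passage ``$g_n\to G\in\partial F_N$ forces $g_n w^{\pm\infty}\to G$'' uses the convergence-group property, which in general has an exceptional point (if $g_n^{-1}\to H$ then $g_n Z\to G$ only for $Z\ne H$). You should either note why $w^{\pm\infty}\ne H$ after a further subsequence, or simply cite the standard fact that the $F_N$-orbit of any pair $(P,Q)\in\partial^2 F_N$ is discrete (equivalently, that the $F_N$-action on $\partial^2 F_N$ is properly discontinuous). This is well known and does not affect the validity of your argument.
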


\begin{proof}
This follows directly from 
Definition \ref{L2}, since the hypothesis $\underset{ i \to \infty}{\lim}||w_i ||_T = 0$ implies
(see Definition \ref{lam-generated} (a)) 
that $L(\Omega) \subset L_\epsilon(T)$ for any $\epsilon > 0$.
\end{proof}

If $T$ has dense orbits (see subsection \ref{trees-graphs}), then there is an alternative description of $L(T)$ in terms of the map $\mathcal Q: \partial \FN \to \hat T = \bar T \cup \partial T$
from Proposition \ref{Q-map}:

\begin{prop}
\label{prop:Q}
Let 
$T\in \cvnbar$ be a tree with dense $F_N$-orbits.
Then one has: 

\smallskip
\noindent
(1)\cite[Proposition 8.5]{CHL2}
For $X,Y\in \partial F_N$, $X\ne Y$ we have $(X,Y)\in L(T)$ if
and only if $\mathcal Q(X)=\mathcal Q(Y)$. 

\smallskip
\noindent
(2)\cite[Proposition 5.8]{CHL2}
If $X\in \partial
F_N$ and 
$\mathcal Q(X)\in \bar T$ then one has $L(X) \subset L(T)$
(i.e. $X$ is an end of $L(T)$, see Remark \ref{ends-of-L}).

\smallskip
\noindent
(3)\cite[Lemma 3.5]{GJLL}, \cite[Proposition 5.2]{CH10}
If $X\in \partial
F_N$ and 
$P:=\mathcal Q(X)\in \partial T$ then $\mathcal
Q^{-1}(P)=\{X\}$ holds.
\qed
\end{prop}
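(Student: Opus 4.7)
The plan is to prove the three parts in order, using the defining property of $\mathcal{Q}$ (Proposition \ref{Q-map}) as the observers'-topology extension of the orbit map $w \mapsto wP$ from a fixed basepoint $P \in T$. In a fixed basis $\mathcal{A}$ of $\FN$, each leaf $(X,Y) \in \partial^2 \FN$ is encoded by the biinfinite reduced word $Y_\mathcal{A}^{-1} X_\mathcal{A}$ (Remark \ref{basis-approach}(3)), and I will work with the finite initial segments $X_k, Y_k \in \FN$ that approximate $X, Y \in \partial \FN$.

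For part (1), I would show $L(T) = L(\mathcal{Q})$, where $L(\mathcal{Q}) := \{(X,Y) \in \partial^2 \FN : \mathcal{Q}(X) = \mathcal{Q}(Y)\}$; by Remark \ref{preimage-lamination}(b), $L(\mathcal{Q})$ is diagonally closed and (provided it is non-empty) an algebraic lamination. For $L(T) \subset L(\mathcal{Q})$, take $(X,Y) \in L(T)$ and use Definition \ref{L2} to produce conjugacy classes $[w_n]$ with $\|w_n\|_T \to 0$ whose cyclic words approximate $Y_\mathcal{A}^{-1} X_\mathcal{A}$ on longer and longer central segments; after cyclic conjugation, write $w_n = X_{k_n} z_n Y_{k_n}^{-1}$ with $X_{k_n} \to X$ and $Y_{k_n} \to Y$. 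Since $w_n(Y_{k_n} P) = X_{k_n} z_n P$ while $\|w_n\|_T \to 0$, the images $X_{k_n} P$ and $Y_{k_n} P$ should share the same observers'-topology limit, giving $\mathcal{Q}(X) = \mathcal{Q}(Y)$. For the reverse inclusion, take $(X,Y)$ with $\mathcal{Q}(X) = \mathcal{Q}(Y)$, pick $g_n \to X$ and $h_n \to Y$ in $\FN$ so that $g_n P$ and $h_n P$ share a common observers'-topology limit, and show that suitable conjugates of $g_n^{-1} h_n$ have translation length tending to zero in $T$ while realizing any prescribed finite subword of $Y_\mathcal{A}^{-1} X_\mathcal{A}$ cyclically.

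For part (2), assuming $\mathcal{Q}(X) \in \bar T$, the observers'-topology convergence $X_k P \to \mathcal{Q}(X)$ together with the fact that the limit lies in $\bar T$ (not in $\partial T$) forces the $X_k P$ to stay bounded in $\bar T$ and to cluster metrically around $\mathcal{Q}(X)$, so $d(X_k P, X_{k+m} P) \to 0$ as $k \to \infty$. Applying the inequality $\|g\|_T \le d(P, gP)$ to $g = X_k^{-1} X_{k+m}$ then produces elements of arbitrarily small translation length whose cyclic forms realize every finite subword of $X_\mathcal{A}$, and the combinatorial description of $L(T)$ following Definition \ref{L2} gives $L(X) \subset L(T)$. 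For part (3), suppose for contradiction that some $Y \neq X$ satisfies $\mathcal{Q}(Y) = P \in \partial T$; by part (1), $(X,Y) \in L(T)$, while both rays $X_k P$ and $Y_k P$ approach the end $P$. I would then use the uniqueness of the geodesic ray from the basepoint to an end in an $\R$-tree, together with the fact that the Cayley-graph geodesic from $Y$ to $X$ passes through the identity, to force a letter-by-letter agreement between $X_\mathcal{A}$ and $Y_\mathcal{A}$, contradicting $X \neq Y$.

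The main obstacle throughout is the gap between the observers' topology and the metric topology on $\hat T$: equality under $\mathcal{Q}$ only yields observers' closeness of the orbit images, which does not directly control displacements and hence translation lengths. The cleanest route is probably to adapt the framework of \cite{CHL2}, which systematically develops the interplay between these two topologies, the map $\mathcal{Q}$, and translation length; in particular, part (3) hinges on the delicate fact that infinite ends of $T$ are \emph{visible} in a rigid way from $\partial \FN$, so that no two distinct boundary points can map to the same point at infinity in $T$.
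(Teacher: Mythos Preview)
The paper does not prove this proposition: it is stated with a terminal \qed and attributed to \cite{CHL2, LL}, so there is no argument in the paper to compare against. Your sketch is an attempt to reconstruct a proof from first principles.

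That said, your plan for part (2) contains a genuine error, not merely an acknowledged gap. You assert that observers'-topology convergence $X_k P \to \mathcal{Q}(X) \in \bar T$ ``forces the $X_k P$ to stay bounded in $\bar T$ and to cluster metrically around $\mathcal{Q}(X)$, so $d(X_k P, X_{k+m} P) \to 0$.'' This implication is false. Observers' convergence to a point of $\bar T$ controls neither metric distance to the limit nor pairwise metric distances: for a branch point $v$ with infinitely many directions one can have $x_n \to v$ in the observers' topology while $d(x_n, x_m)$ stays bounded away from $0$ or even diverges. You flag this obstacle in your final paragraph, but your part-(2) argument nonetheless relies on the false step, so as written it does not go through. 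The same difficulty infects part (1): in both directions you need to pass between ``$\mathcal{Q}$-images agree (observers' data)'' and ``translation lengths are small (metric data),'' and neither of your suggested bridges is valid without further input. For instance, in the forward direction of (1), $\|w_n\|_T \to 0$ does not give $d(Y_{k_n}P, w_n Y_{k_n}P) \to 0$, which is what you would need.

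The route taken in \cite{CHL2} avoids this trap by using an alternative, intrinsically geometric description of $\mathcal{Q}(X)$---as the unique point in a nested intersection of subtrees determined by the successive cylinders of $X$---rather than attempting to recover metric control from the bare observers'-topology extension property of Proposition~\ref{Q-map}. Your outline for part (3) is likewise too vague to constitute an argument; the ``letter-by-letter agreement'' step does not follow from the tree facts you cite. If you want a self-contained proof you will need to import substantial machinery from \cite{CHL2} or \cite{LL}; the extension property alone is not enough.
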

Note that Lemma~3.5 in \cite{GJLL} uses older terminology than the currently standard one. For the explanation of the transition to the terminology presently in use see Proposition~3.1 of \cite{LL}  together with the paragraph before and after it. 

\begin{rem}
\label{ends-halfleaves}
From parts (2) and (3) of the last proposition we obtain the following inclusions:
$$L(T)^1 \subset \mathcal Q^{-1}(\bar T)  \subset {\rm Ends}(L(T))$$
However, the reader should be warned that in general (including in the case $T = T_\pm(\phi)$) both of these inclusions are strict.
\end{rem}

We thus obtain as a direct consequence of Remark \ref{preimage-lamination} (a):

\begin{prop}\label{prop:tc}\label{CHL2}
Let 
$T\in \cvnbar$ be an $\R$-tree with 
dense
orbits. Then $L(T)$ is diagonally closed:
$$L(T)=\diag(L(T)).$$
\qed
\end{prop}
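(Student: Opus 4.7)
The plan is to deduce this immediately from Proposition \ref{prop:Q}(1) together with Remark \ref{preimage-lamination}(a). Since $T$ has dense orbits, Proposition \ref{Q-map} provides the $\FN$-equivariant map $\mathcal{Q}: \partial \FN \to \hat T$, and by Proposition \ref{prop:Q}(1) we have the equality of sets
\[
L(T)=\{(X,Y)\in \partial^2\FN \mid \mathcal{Q}(X)=\mathcal{Q}(Y)\}=L(\mathcal{Q}),
\]
where $L(\mathcal{Q})$ is the set defined in Remark \ref{preimage-lamination}(a) applied to $j=\mathcal{Q}$ and $B=\hat T$. That remark then yields at once that $L(T)$ is diagonally closed.

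For completeness, the reason $L(\mathcal{Q})$ is diagonally closed is simply that equality in $\hat T$ is an equivalence relation: given any finite chain $Z_0=X, Z_1, \dots, Z_m=Y$ with $(Z_i,Z_{i+1})\in L(T)$, one has $\mathcal{Q}(Z_i)=\mathcal{Q}(Z_{i+1})$ for each $i$, hence by transitivity $\mathcal{Q}(X)=\mathcal{Q}(Y)$, so $(X,Y)\in L(T)$. The reverse inclusion $L(T)\subseteq\diag(L(T))$ is the trivial case $m=1$ noted after Definition \ref{diagonal-extension}.

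There is no real obstacle here; the content of the proposition is entirely packaged in the two prior results, and the only step is to observe that $L(T)$ is precisely the ``fiber relation'' of the $\FN$-equivariant map $\mathcal{Q}$, so that Remark \ref{preimage-lamination}(a) applies verbatim. Note that continuity of $\mathcal{Q}$ (with respect to the observers' topology) is not needed for this particular statement; one only uses that $\mathcal{Q}$ is a well-defined map and that Proposition \ref{prop:Q}(1) identifies $L(T)$ with its fiber relation.
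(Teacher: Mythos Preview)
Your proof is correct and is exactly the argument the paper intends: the paper states this proposition as ``a direct consequence of Remark~\ref{preimage-lamination}~(a)'' immediately after Proposition~\ref{prop:Q}, and gives no further proof. Your write-up simply unpacks that one-line deduction.
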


\begin{defn}
\label{laminations-equivalence}
For any algebraic lamination $L$ we consider the {\em associated equivalence relation} $\sim_L$, by which we mean the equivalence relation on $\partial \FN$ which is generated by the relation:
$$X \sim Y \,\, : \Longleftrightarrow \,\, (X, Y) \in L. $$
\end{defn}

\begin{prop}
\label{compact-quotient}
Let $L \subset \partial^2\FN$ be any diagonally closed algebraic lamination, and let $\sim_L$ be the associated equivalence relation on $\partial \FN$. 

Then the quotient set $\partial \FN /\sim_L$ provided with the quotient topology is a compact Hausdorff space.
\end{prop}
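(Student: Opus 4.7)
The plan is to identify $\sim_L$ with the subset $L\cup\Delta$ of $\partial\FN\times\partial\FN$, where $\Delta$ denotes the diagonal, to show that this subset is closed, and then to invoke the standard topological fact that a compact Hausdorff space modulo a closed equivalence relation is itself compact Hausdorff.

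First I would verify that $L\cup\Delta$ is already an equivalence relation on $\partial\FN$. Reflexivity is immediate (it contains $\Delta$) and symmetry follows from the flip-invariance of $L$. Transitivity is the step where the diagonal-closedness hypothesis enters crucially: given $(X,Y),(Y,Z)\in L\cup\Delta$, if one of the two pairs lies in $\Delta$ the conclusion is trivial, and otherwise both pairs lie in $L$, so either $X=Z$ (hence $(X,Z)\in\Delta$) or $(X,Z)\in\diag(L)=L$ by Definition \ref{diagonal-extension} together with the hypothesis that $L$ is diagonally closed. Since $\sim_L$ is by construction the smallest equivalence relation containing $L$, and since any such equivalence relation must contain $\Delta$ (by reflexivity) and $\diag(L)=L$ (by transitivity), we conclude $\sim_L=L\cup\Delta$.

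Next I would show that $L\cup\Delta$ is closed in $\partial\FN\times\partial\FN$. Since $\partial\FN$ is compact Hausdorff, $\Delta$ is closed and $\partial^2\FN$ is open. Let a net $(X_\alpha,Y_\alpha)$ in $L\cup\Delta$ converge to some $(X,Y)$. If $X=Y$ there is nothing to prove. Otherwise $(X,Y)$ lies in the open set $\partial^2\FN$, so eventually $(X_\alpha,Y_\alpha)\in\partial^2\FN\cap(L\cup\Delta)=L$, and the conclusion $(X,Y)\in L$ follows from $L$ being closed in $\partial^2\FN$, which is part of the definition of an algebraic lamination.

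The final step invokes the standard fact: for a compact Hausdorff space $Z$ and a closed equivalence relation $R\subseteq Z\times Z$, the quotient $Z/R$ is compact Hausdorff. Compactness of $\partial\FN/\sim_L$ is automatic as a continuous image of the compact space $\partial\FN$; for Hausdorffness one uses that closedness of $R$ together with compactness of $Z$ forces the quotient map to be closed, after which two disjoint (necessarily closed, hence compact) equivalence classes can be separated by disjoint saturated open neighborhoods via the normality of $Z$. The main subtlety of the whole argument is the verification of transitivity in the first step: without diagonal closedness one would only obtain $\sim_L=\diag(L)\cup\Delta$, and $\diag(L)$ need not be closed in $\partial^2\FN$, so in that greater generality the quotient can indeed fail to be Hausdorff.
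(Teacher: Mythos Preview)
Your proof is correct and follows essentially the same route as the paper: identify the graph of $\sim_L$ with $L\cup\Delta$ (using flip-invariance for symmetry and diagonal closedness for transitivity), observe that this set is closed in $\partial\FN\times\partial\FN$, and then appeal to the standard fact that a compact Hausdorff space modulo a closed equivalence relation is compact Hausdorff. The paper's version is terser---it states each of these three steps in a single sentence and leaves the verification of transitivity and of closedness of $L\cup\Delta$ implicit---whereas you spell out the net argument for closedness and sketch why the quotient map is closed; but the underlying argument is identical.
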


\begin{proof}
By definition of the diagonal closure
the set $L = \diag(L)$ is equal to the transitive closure of $L$ in $\partial^2 \FN$.
Since furthermore $L$ is flip-invariant, the subset $L \cup \{(X,X)\mid X \in \FN\} \subset \partial \FN \times \partial \FN$ defines a relation that is reflexive, symmetric and transitive, so that it must agree with the graph of the equivalence relation $\sim_L$ generated by $L$.

But as lamination $L$ is a closed subset of $\partial^2\FN$, which means precisely that $L \cup \{(X,X)\mid X \in \FN\}$ is closed in $\partial \FN \times \partial \FN$.

Therefore $\partial F_N/\sim_L$, with the quotient topology, inherits from $\partial \FN$ that it is a compact Hausdorff space.
\end{proof}

\begin{rem}
\label{mod-L(T)}
For any $\R$-tree $T \in \cvnbar$ with dense orbits we can consider the zero lamination $L(T)$ and the associated equivalence relation $\sim_{L(T)}$. It has been shown in \cite{CHL} that in this case the quotient space $\FN/\sim_{L(T)}$ is precisely the completed tree $\hat T$, equipped with the observers' topology, and the quotient map $\FN \to \FN/\sim_{L(T)}$ is precisely the map $\mathcal Q$, see subsection \ref{trees-graphs}.

\end{rem}

\subsection{Bestvina-Feighn-Handel laminations}\label{sect:iwip laminations}

${}^{}$

In~\cite{BFH97} Bestvina, Feighn and Handel introduced for every iwip automorphism $\phi \in \Out(\FN)$ a {\em stable} lamination which we denote by $L_{BFH}(\phi)$. This algebraic lamination was defined by the use of train track maps that represent $\phi$:
very roughly, it arises from iterating the train track map on any edge and passing to the limit.
For more details, also concerning the following proposition, the reader is referred to \cite{KL6}, in particular to its subsection 3.6. It is shown 
there that
the leaves of $L_{BFH}(\phi)$ have a uniform expanding property under iteration of $\phi$, while those of $L_{BFH}(\phi^{-1})$ are uniformly contracting.
 
\begin{prop}
\label{minimal-distinct}
Let $\phi \in \Out(\FN)$ be iwip. Then we have:
\begin{enumerate}
\item \cite[Proposition 1.8]{BFH97}, \cite[Proposition 3.38]{KL6}
The lamination $L_{BFH}(\phi)$ is minimal. 
\item \cite[Lemma 3.5]{BFH97}
The laminations $L_{BFH}(\phi)$ and $L_{BFH}(\phi^{-1})$ are distinct (and thus disjoint, by Remark \ref{minimal-laminations}).
\end{enumerate}
\qed
\end{prop}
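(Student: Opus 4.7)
The plan is to deduce both parts from the standard train-track construction of $L_{BFH}(\phi)$, which is how it is defined in \cite{BFH97} and used in \cite{KL6}. Fix an (improved relative) train track map $f: \Gamma \to \Gamma$ representing a positive power $\phi^k$ of $\phi$, with irreducible transition matrix $M$ and Perron-Frobenius eigenvalue $\lambda > 1$. Recall that a leaf of $L_{BFH}(\phi)$, read as a biinfinite reduced edge-path in $\Gamma$, is characterized by the property that every finite subpath occurs as a subpath of some iterate $f^n(e)$ of some edge $e$ of $\Gamma$.

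For part (1), the first step is to exploit the fact that $\phi$ is iwip and not just irreducible: some power $M^m$ of the transition matrix is strictly positive (primitivity). Translated to $f$, this says that for every pair of edges $e, e'$ of $\Gamma$ and every large enough $n$, the path $f^n(e)$ contains $e'$ (indeed, many copies of $e'$) as a subpath. Combined with the expansion property of $f$, this implies that any finite subpath $v$ of any leaf of $L_{BFH}(\phi)$ occurs as a subpath of $f^n(e)$ for every edge $e$ once $n$ is sufficiently large, and in fact occurs as a subpath of every leaf. The second step is to pass from subpaths in $\Gamma$ to the algebraic lamination in $\partial^2 \FN$ via the marking $\FN \cong \pi_1(\Gamma)$: the previous fact says that the $\FN$-orbit of any leaf $(X,Y) \in L_{BFH}(\phi)$ is dense in $L_{BFH}(\phi)$, since one can approximate any target leaf by translating $(X,Y)$ so that arbitrarily long prescribed subpaths of the target appear at the origin. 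By Remark~\ref{minimal-laminations}, this density condition is equivalent to minimality of $L_{BFH}(\phi)$.

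For part (2), the strategy is to use the uniform expansion/contraction dichotomy that distinguishes $L_{BFH}(\phi)$ from $L_{BFH}(\phi^{-1})$. Specifically, there is a constant $C > 0$ such that for every finite subpath $v$ occurring in a leaf of $L_{BFH}(\phi)$ and every $n \geq 0$, the reduced edge-path length satisfies $|f^n(v)| \geq C \lambda^n |v|$, where $f^n(v)$ is read off along the image leaf; symmetrically, leaves of $L_{BFH}(\phi^{-1})$ are uniformly expanded by a train track representative of $\phi^{-1}$, and therefore uniformly contracted by $\phi$. If one had $L_{BFH}(\phi) = L_{BFH}(\phi^{-1})$, then for a sufficiently long subword $v$ of a common leaf, its image under iteration of $\phi$ would be forced to both grow and shrink exponentially, a contradiction. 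Hence the two laminations are distinct. Since both are minimal by part (1), Remark~\ref{minimal-laminations} implies that they are disjoint.

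The main obstacle is the careful train-track bookkeeping behind the uniform expansion/contraction statement used in part (2): one needs that $f$ takes legal (leaf) subpaths to legal subpaths without cancellation at concatenations, so that lengths multiply by $\lambda$ up to a bounded additive error, and an analogous statement for a train track representing $\phi^{-1}$. These are standard consequences of the Bestvina-Feighn-Handel machinery and are recorded in the form we need in \cite{KL6}; in a self-contained write-up the bulk of the work would go into quoting or re-deriving this expansion lemma. Part (1), by contrast, is essentially a soft consequence of primitivity once the train track setup is in place.
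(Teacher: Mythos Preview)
Your proposal is correct and follows the same approach the paper indicates: the paper does not give its own proof of this proposition but refers to \cite{KL6} (specifically its subsection~3.6), noting precisely the uniform expansion/contraction dichotomy you invoke for part~(2), and part~(1) via primitivity of the transition matrix is the standard argument from \cite{BFH97}. One minor quibble: for an iwip you get a genuine (not merely relative) irreducible train track, so the qualifier ``improved relative'' is unnecessary here.
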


\smallskip

In~\cite{KL6} we established the precise relationship between
$L_{BFH}(\phi)$ and $L(T_-(\phi))$. 
This result
has been subsequently generalized by Coulbois, Hilion and Reynolds~\cite{CHP11} to arbitrary ``indecomposable'' (in the sense of \cite{Gui3}) trees $T\in \cvnbar$.

\begin{thm}
\label{prop:KL6}
Let $\phi\in \Out(F_N)$ be an atoroidal iwip.
Then we have:
\[
L(T_-(\phi))=\diag\left(L_{BFH}(\phi)\right)
\]
In particular, $L_{BFH}(\phi)$ is the only minimal sublamination of $L(T_-(\phi))$.
\qed
\end{thm}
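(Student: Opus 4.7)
My plan is to prove the two inclusions $\diag(L_{BFH}(\phi)) \subseteq L(T_-(\phi))$ and $L(T_-(\phi)) \subseteq \diag(L_{BFH}(\phi))$ separately, and then to derive the ``in particular'' clause as a formal consequence. For that last clause, note that $L_{BFH}(\phi)$ is minimal by Proposition \ref{minimal-distinct}, so it is itself a minimal sublamination of $L(T_-(\phi))$. If $L'$ is any other minimal sublamination, choose a leaf $(X,Y)\in L'$; by the equality $L(T_-(\phi))=\diag(L_{BFH}(\phi))$ there is a chain $X=Z_0,Z_1,\ldots,Z_m=Y$ with $(Z_i,Z_{i+1})\in L_{BFH}(\phi)$, so $X$ is a half-leaf of both $L'$ and $L_{BFH}(\phi)$. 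By Remark \ref{minimal-laminations} any half-leaf of a minimal lamination generates it, hence $L'=L(X)=L_{BFH}(\phi)$.

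For the inclusion $\diag(L_{BFH}(\phi)) \subseteq L(T_-(\phi))$, I would first establish the non-diagonal version $L_{BFH}(\phi) \subseteq L(T_-(\phi))$. Fix a train-track representative $f:\Gamma\to\Gamma$ of $\phi$, and fix any $w_0\in\FN\smallsetminus\{1\}$. Since $\phi$ is atoroidal, the conjugacy class $[w_0]$ is not periodic under $\phi$, and by the standard North-South dynamics of $\phi$ on the space of algebraic laminations the set $\Omega=\{[\phi^n(w_0)]:n\ge 0\}$ generates $L_{BFH}(\phi)$, that is $L_{BFH}(\phi)=L(\Omega)$. The homothety $H_-:T_-\to T_-$ realizing $\phi$ has stretching factor $1/\lambda_-<1$, whence
$$\|\phi^n(w_0)\|_{T_-}=\lambda_-^{-n}\,\|w_0\|_{T_-}\xrightarrow[n\to\infty]{}0.$$
Lemma \ref{subset-dual-lam} now gives $L_{BFH}(\phi)=L(\Omega)\subseteq L(T_-(\phi))$, and Proposition \ref{prop:tc} (diagonal closure of $L(T_-(\phi))$) immediately upgrades this to $\diag(L_{BFH}(\phi))\subseteq L(T_-(\phi))$.

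The reverse inclusion $L(T_-(\phi))\subseteq \diag(L_{BFH}(\phi))$ is the principal difficulty. My strategy is to exploit the $\mathcal Q$-map characterization from Proposition \ref{prop:Q}: for distinct $X,Y\in\partial \FN$ one has $(X,Y)\in L(T_-(\phi))$ if and only if $\mathcal Q(X)=\mathcal Q(Y)$. Given such a leaf, set $P=\mathcal Q(X)=\mathcal Q(Y)\in \hat T_-$; the plan is to show that the full fiber $\mathcal Q^{-1}(P)$ is finite and admits a natural cyclic order coming from the germs of directions at $P$ in $\hat T_-$, so that by enumerating $\mathcal Q^{-1}(P)$ cyclically we obtain a finite chain $X=Z_0,Z_1,\ldots,Z_m=Y$ in $\mathcal Q^{-1}(P)$ with each consecutive pair $(Z_i,Z_{i+1})\in L_{BFH}(\phi)$.

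The hardest step is verifying that these consecutive pairs are actually leaves of $L_{BFH}(\phi)$ and not merely of $L(T_-(\phi))$. The natural way to carry it out is to realize a branch point $P$ of $T_-$ dynamically: applying $H_-^{-1}$ iteratively, the point $P$ is produced by a sequence of identifications corresponding to illegal turns of $\phi^{-1}$ (equivalently, to the Nielsen-type cancellations created by the train-track map for $\phi^{-1}$ having dilatation $\lambda_->1$). These illegal turns are precisely what generates leaves of $L_{BFH}(\phi)$ through $P$, and an argument bounding the number of gates at $P$ (using the $\mathcal Q$-index of $T_-$, Proposition \ref{prop:Q-ind}) allows one to identify the cyclic neighbors in $\mathcal Q^{-1}(P)$ with those singular leaves. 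Combining the two inclusions yields $L(T_-(\phi))=\diag(L_{BFH}(\phi))$.
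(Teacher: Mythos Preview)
This theorem is not proved in the present paper at all: it is quoted from \cite{KL6} (note the terminal \verb|\qed| with no proof body), and the paper then uses it as a black box in the proof of Proposition~\ref{prop:step1}. So there is no ``paper's own proof'' to compare against here; the actual argument lives in \cite{KL6} and is substantially more delicate than what you sketch.

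Your treatment of the easy inclusion $\diag(L_{BFH}(\phi))\subseteq L(T_-(\phi))$ is essentially fine, except that you overclaim when you write $L_{BFH}(\phi)=L(\Omega)$. What is true (and is all you need) is $L_{BFH}(\phi)\subseteq L(\Omega)$; the equality fails in general, and indeed the content of Proposition~\ref{prop:step1} is precisely that $L(\Omega)$ is typically strictly larger than $L_{BFH}(\phi)$ and equals $\diag(L_{BFH}(\phi))$. Using the theorem you are trying to prove to justify this equality would be circular.

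The hard inclusion $L(T_-(\phi))\subseteq\diag(L_{BFH}(\phi))$ has a genuine gap in your sketch. You invoke ``a natural cyclic order coming from the germs of directions at $P$ in $\hat T_-$'', but $\hat T_-$ is an $\mathbb R$-tree, not a surface: there is no cyclic order on the set of directions at a branch point, so the strategy of enumerating $\mathcal Q^{-1}(P)$ cyclically and reading off consecutive pairs does not get off the ground. (This is exactly where the free-group situation diverges from the classical surface case.) Your fallback paragraph about iterating $H_-^{-1}$, illegal turns of $\phi^{-1}$, and bounding gates via the $\mathcal Q$-index is a plausible heuristic but is not a proof: $H_-^{-1}$ is \emph{expanding} on $T_-$, so iterating it does not produce $P$, and nothing you write actually pins down why the ``adjacent'' preimages (whatever that would mean absent a cyclic order) give leaves of $L_{BFH}(\phi)$ rather than merely of $L(T_-)$. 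The argument in \cite{KL6} proceeds instead through a careful train-track analysis of the structure of singular leaves; you would need to either reproduce that or find a genuinely new mechanism.
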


\smallskip

Via Proposition \ref{minimal-distinct} and Lemma \ref{disjoint-extensions},  the last proposition directly implies the following fact, 
previously considered ``folk knowledge'', namely
that for any atoroidal iwip $\phi$ the laminations
$L(T_+(\phi))$ and $L(T_-(\phi))$ are disjoint in the following strong sense:

\begin{prop}\label{prop:disj}
Let 
$\phi\in \Out(F_N)$ be an atoroidal iwip. 
Then
$L(T_+(\phi))\cap L(T_-(\phi)) =\emptyset$. Moreover, if $(X,Y)\in L(T_+(\phi))$ then
there does not exist $Z\in \partial F_N$ such that $(X,Z)\in L(T_-(\phi))$; that is, the laminations $L(T_+(\phi))$ and $L(T_-(\phi))$ have no common half-leaves.
\qed
\end{prop}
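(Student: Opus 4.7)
The plan is to combine Theorem \ref{prop:KL6} applied to both $\phi$ and $\phi^{-1}$ with the general disjointness results for diagonal extensions of distinct minimal laminations. Since $\phi$ is an atoroidal iwip if and only if $\phi^{-1}$ is, and since clearly $T_+(\phi) = T_-(\phi^{-1})$, Theorem \ref{prop:KL6} yields both
\[
L(T_-(\phi)) = \diag\bigl(L_{BFH}(\phi)\bigr) \quad \text{and} \quad L(T_+(\phi)) = \diag\bigl(L_{BFH}(\phi^{-1})\bigr).
\]
By Proposition \ref{minimal-distinct}, the laminations $L_{BFH}(\phi)$ and $L_{BFH}(\phi^{-1})$ are both minimal and distinct from one another.

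First I would handle the disjointness claim $L(T_+(\phi)) \cap L(T_-(\phi)) = \emptyset$. This is an immediate consequence of Lemma \ref{disjoint-extensions}: applied to the two distinct minimal laminations $L_{BFH}(\phi)$ and $L_{BFH}(\phi^{-1})$, it gives precisely $\diag(L_{BFH}(\phi)) \cap \diag(L_{BFH}(\phi^{-1})) = \emptyset$, which after the identifications above is the desired statement.

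Next I would upgrade this to the stronger no-common-half-leaf statement. Here the key observation is that, by Theorem \ref{prop:KL6}, $L(T_-(\phi)) = \diag(L_{BFH}(\phi))$ is itself an algebraic lamination, and so is $L_{BFH}(\phi)$. Hence by Remark \ref{equal-half-leaves} these two laminations have the same set of half-leaves:
\[
L(T_-(\phi))^1 = L_{BFH}(\phi)^1.
\]
The analogous identity holds for $L(T_+(\phi))^1 = L_{BFH}(\phi^{-1})^1$. Since $L_{BFH}(\phi)$ and $L_{BFH}(\phi^{-1})$ are distinct minimal laminations, Remark \ref{minimal-laminations} (in particular the consequence that two distinct minimal laminations have disjoint sets of half-leaves) yields
\[
L_{BFH}(\phi)^1 \cap L_{BFH}(\phi^{-1})^1 = \emptyset,
\]
and therefore $L(T_+(\phi))^1 \cap L(T_-(\phi))^1 = \emptyset$. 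This is exactly the claim that no $X \in \partial \FN$ can be a half-leaf of both $L(T_+(\phi))$ and $L(T_-(\phi))$, which is the "Moreover" part of the proposition.

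There is no real obstacle here: once the translation between the dual laminations $L(T_\pm(\phi))$ and the Bestvina--Feighn--Handel laminations $L_{BFH}(\phi^{\mp 1})$ provided by Theorem \ref{prop:KL6} is in hand, everything reduces to elementary facts about minimal algebraic laminations and their diagonal closures. The only subtle point to verify is the identification $T_+(\phi) = T_-(\phi^{-1})$, which is immediate from the North--South dynamics of $\phi$ on $\CVNbar$ (so that the attracting fixed tree of $\phi$ is the repelling fixed tree of $\phi^{-1}$), together with the fact that $\phi^{-1}$ is also an atoroidal iwip so that Theorem \ref{prop:KL6} applies to it.
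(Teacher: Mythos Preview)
Your proof is correct and follows exactly the route the paper indicates: apply Theorem~\ref{prop:KL6} (to both $\phi$ and $\phi^{-1}$, using $T_+(\phi)=T_-(\phi^{-1})$) to identify $L(T_\pm(\phi))$ with $\diag(L_{BFH}(\phi^{\mp 1}))$, then invoke Proposition~\ref{minimal-distinct} and Lemma~\ref{disjoint-extensions} for the disjointness, and Remark~\ref{equal-half-leaves} together with Remark~\ref{minimal-laminations} for the half-leaf statement. The paper merely records this as a direct consequence without writing out the details; your version makes the argument explicit, and there is nothing to add.
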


\section{Mitra's lamination}

In \cite{M1} Mitra gives, in a more general context, and with a slightly different vocabulary than used here, a definition which translates to the following:

\begin{defn}\label{defn:Mitra-lamination}
\label{Mitra-defn}
For any $h \in \FN \smallsetminus \{1\}$ let
$$\Lambda^{\pm}_{\phi, h} := L(\{\phi^n([h]) \mid n \geq 0 \})$$
and
$$\Lambda_\phi:= \underset{h\in F_N \smallsetminus \{ 1\}}{\cup} \Lambda^\pm_{\phi,h}\, .$$
\end{defn}

If we fix a basis $\mathcal A$ of $\FN$, then the laminations $\Lambda^{\pm}_{\phi, h}$ consist precisely of those leaves $(X, Y) \in \partial^2\FN$ which have the following property:  For any finite subword $v$ of the biinfinite reduced word $Y_\mathcal A^{-1} X_\mathcal A$ (see Remark \ref{basis-approach} (3)) exists an iterate $\phi^n$ with $n \geq 0$ such that $v$ or $v^{-1}$ is subword of the cyclically reduced cyclic word $\hat h_n$ which represents the conjugacy class $\phi^n([h])$.

\begin{rem}
\label{Mitra-comment}
(In \cite{M1} Mitra doesn't quite use the lamination $\Lambda^{\pm}_{\phi, h}$ as defined above, but rather works with a set $\Lambda_{\phi, h}$ which is close to $\Lambda^{\pm}_{\phi, h}$ but isn't quite an algebraic lamination in our sense (as he omits in the definition the ``or in $v^{-1}$'' from the previous sentence, so that his $\Lambda_{\phi, h}$ is not, in general, flip-invariant).  However, $\Lambda_{\phi, h}\cup \Lambda_{\phi, h}=\Lambda^{\pm}_{\phi, h}$, so that $\underset{h\in F_N \smallsetminus \{ 1\}}{\cup} \Lambda^\pm_{\phi,h}=\underset{h\in F_N \smallsetminus \{ 1\}}{\cup} \Lambda_{\phi, h}$ and thus 
the definition of $\Lambda_\phi$ given above agrees with the definition given in \cite{M1}.
\end{rem}

Mitra's main result in \cite{M1}, specialized to the case of mapping tori of hyperbolic automorphisms of free groups, implies:

\begin{thm}\label{thm:fiber}\cite{M1}
Let $\phi\in\Out(F_N)$ be a hyperbolic automorphism,
and let $\hat \iota: \partial
F_N\to\partial G_\phi$ be the Cannon-Thurston map.

Then for $(X,Y)\in \partial^2 F_N$ we have $\hat \iota(X)=\hat \iota(Y)$ if
and only if 
\[
(X,Y)\in \Lambda_\phi\cup\Lambda_{\phi^{-1}}.
\]
\end{thm}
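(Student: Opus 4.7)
The plan is to derive this statement as a direct specialization of Mitra's main theorem from \cite{M1} on Cannon--Thurston maps for short exact sequences of word-hyperbolic groups. For such a sequence $1 \to H \to G \to Q \to 1$, Mitra defines, for each boundary point $z \in \partial Q$, an ``ending lamination'' $\Lambda_z \subset \partial^2 H$, and proves that for $X \neq Y \in \partial H$ one has $\hat\iota(X) = \hat\iota(Y)$ if and only if $(X,Y) \in \Lambda_z$ for some $z \in \partial Q$. In our setting $H = \FN$, $G = G_\phi$ and $Q = \langle t\rangle \cong \Z$, so $\partial Q$ consists of only two points, which we denote $t^{+\infty}$ and $t^{-\infty}$. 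Thus only two of Mitra's laminations appear, and the task reduces to identifying them with $\Lambda_\phi$ and $\Lambda_{\phi^{-1}}$ from Definition~\ref{defn:Mitra-lamination}.

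The second step is therefore to unpack Mitra's definition in the present context. A geodesic ray in $Q$ representing $t^{+\infty}$ lifts to the sequence of cosets $\FN t^n$ in $G_\phi$; two points $X, Y \in \partial \FN$ form a leaf of Mitra's $\Lambda_{t^{+\infty}}$ precisely when there exist approximating sequences $X_k, Y_k \in \FN$ and integers $n_k \to \infty$ such that $X_k t^{n_k}$ and $Y_k t^{n_k}$ stay a bounded $G_\phi$-distance apart while $d_{\FN}(X_k, Y_k) \to \infty$. Using the semidirect-product relation $t w t^{-1} = \phi(w)$, this boundedness in $G_\phi$ translates to saying that the long common ``middle portion'' of the $\FN$-geodesic $[X_k, Y_k]$ is (after conjugating to the midpoint) a subword of a cyclic conjugate of some $\phi^{n_k}(h_k)$ with $|h_k|_{\FN}$ bounded. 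Fixing a basis $\cal A$ of $\FN$ and passing to the limit yields exactly the condition that every finite subword of the biinfinite reduced word $Y_\cal A^{-1} X_\cal A$ (see Remark \ref{basis-approach}(3)) is a subword of some $\phi^n(\hat h)$ with $n \geq 0$ and $h \in \FN\smallsetminus\{1\}$, which by Definition~\ref{defn:Mitra-lamination} and Definition~\ref{lam-generated}(a) places $(X, Y)$ in $\Lambda_\phi$. The other direction in $\partial Q$ yields $\Lambda_{\phi^{-1}}$ in a symmetric manner.

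The main obstacle, which is exactly what Mitra establishes in \cite{M1}, is the ``$\Leftarrow$'' direction: any pair of leaves produced by this construction must actually collapse under $\hat\iota$. The key observation is that $\phi^n(h) = t^n h t^{-n}$ in $G_\phi$, so while $\phi^n(h)$ can be arbitrarily long in $\FN$, it has $G_\phi$-length at most $2n + |h|_{\FN}$. Consequently, two $\FN$-words sharing a long common subword lying in a cyclic conjugate of some $\phi^n(h)$ are close in $G_\phi$ even if they are very far in $\FN$; combined with the continuity and $\FN$-equivariance of $\hat\iota$, this forces $\hat\iota(X) = \hat\iota(Y)$ for every $(X,Y) \in \Lambda_\phi\cup\Lambda_{\phi^{-1}}$. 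The converse ``$\Rightarrow$'' direction is proved by exploiting hyperbolicity of $G_\phi$: thin-triangle arguments show that any long portion of the $\FN$-geodesic $[X_k, Y_k]$ that collapses in $G_\phi$ must track along a vertical $\langle t\rangle$-segment, forcing the configuration described above with either $n_k \to +\infty$ or $n_k \to -\infty$, and thus placing $(X, Y)$ into $\Lambda_\phi\cup\Lambda_{\phi^{-1}}$.
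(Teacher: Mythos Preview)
Your proposal is correct and matches the paper's treatment: the paper does not prove this theorem but simply quotes it as a specialization of Mitra's main result in \cite{M1} (note also Remark~\ref{Mitra-comment}, which explains why the paper's $\Lambda_\phi$ coincides with Mitra's union $\bigcup_h \Lambda_{\phi,h}$). Your write-up goes further than the paper by sketching how Mitra's general ending-lamination construction for $1\to H\to G\to Q\to 1$ specializes when $Q=\langle t\rangle$ and $\partial Q=\{t^{+\infty},t^{-\infty}\}$, which is a reasonable elaboration but not something the paper itself supplies.
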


\begin{rem}
\label{diagonally-closed-fiber}
Using Remark \ref{preimage-lamination} we observe that
Theorem \ref{thm:fiber} implies directly that $\Lambda_\phi \cup \Lambda_{\phi^{-1}}$ is an algebraic lamination, and that it is diagonally closed.

However, the fact that each of $\Lambda_\phi$ and $\Lambda_{\phi^{-1}}$ are also laminations (and also diagonally closed) only follows from the following proposition.

\end{rem}

\begin{prop}\label{prop:step1}
Let 
$\phi\in\Out(F_N)$ be an atoroidal iwip. Then \[\Lambda_\phi=L(T_-(\phi))=\diag \left(L_{BFH}(\phi)\right).\]
\end{prop}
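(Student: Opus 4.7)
The second equality $L(T_-(\phi))=\diag(L_{BFH}(\phi))$ is exactly Theorem~\ref{prop:KL6} from \cite{KL6}, so the real work is to establish $\Lambda_\phi = L(T_-(\phi))$. The plan is to prove this as a two-sided inclusion; the easy direction $\Lambda_\phi \subseteq L(T_-(\phi))$ comes from a translation length estimate on $T_-$, and the reverse direction uses Mitra's Theorem~\ref{thm:fiber} to show $\Lambda_\phi$ is diagonally closed, and then traps $L_{BFH}(\phi)$ inside $\Lambda_\phi$ via train tracks.

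For the easy inclusion, I would fix a lift $\Phi \in \Aut(\FN)$ of $\phi$ and use the homothety $H_-\colon T_- \to T_-$ of stretching factor $1/\lambda_-$ realizing $\Phi$ from Subsection~\ref{trees-graphs}. The relation $\Phi(w) = H_- w H_-^{-1}$ of isometries of $T_-$ yields
\[
\|\phi^n(h)\|_{T_-} = \lambda_-^{-n}\|h\|_{T_-} \xrightarrow[n\to\infty]{} 0
\]
for every $h \in \FN \smallsetminus \{1\}$. Consequently, Lemma~\ref{subset-dual-lam} applies to $\Omega = \{\phi^n([h]) : n \geq 0\}$ and gives $\Lambda^{\pm}_{\phi,h} = L(\Omega) \subseteq L(T_-(\phi))$; taking the union over $h$ yields $\Lambda_\phi \subseteq L(T_-(\phi))$.

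For the reverse inclusion I would first verify that $\Lambda_\phi$ itself is diagonally closed, which is the content of Remark~\ref{diagonally-closed-fiber}. By Mitra's Theorem~\ref{thm:fiber} combined with Remark~\ref{preimage-lamination}(a), the union $\Lambda_\phi \cup \Lambda_{\phi^{-1}}$ is diagonally closed. Applying the easy inclusion to both $\phi$ and $\phi^{-1}$ (and using $T_-(\phi^{-1}) = T_+(\phi)$) together with Proposition~\ref{prop:tc} gives $\diag(\Lambda_\phi) \subseteq L(T_-(\phi))$ and $\diag(\Lambda_{\phi^{-1}}) \subseteq L(T_+(\phi))$; these are disjoint by Proposition~\ref{prop:disj}. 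Lemma~\ref{intersection-total} then forces $\Lambda_\phi$ to be diagonally closed. It then suffices to show $L_{BFH}(\phi) \subseteq \Lambda_\phi$: combining this with diagonal closure and Theorem~\ref{prop:KL6} gives $L(T_-(\phi)) = \diag(L_{BFH}(\phi)) \subseteq \diag(\Lambda_\phi) = \Lambda_\phi$, which closes the argument.

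The substantive step is therefore the inclusion $L_{BFH}(\phi) \subseteq \Lambda^{\pm}_{\phi,h}$ for a single (hence every) $h \neq 1$. Fixing a train track representative $f\colon G \to G$ of a suitable positive power of $\phi$, the Bestvina--Feighn--Handel construction produces the leaves of $L_{BFH}(\phi)$ as nested limits of longer and longer subwords of iterates $f^n(e)$ of single edges $e$ of $G$. Because $\phi$ is an atoroidal iwip, the conjugacy classes $\phi^n([h])$ are pairwise distinct and their cyclic representatives $\widehat{\phi^n(h)}$ are, up to uniformly bounded cancellation at the concatenation seams (standard BFH bounded cancellation), concatenations of the words $f^n(e)$ running over the edges $e$ in the edge-path representing $h$ in $G$. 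Hence every finite subword of a leaf of $L_{BFH}(\phi)$ eventually appears as a subword of some $\widehat{\phi^m(h)}$, which by the combinatorial description of $L(\Omega)$ in Definition~\ref{lam-generated}(a) means it lies in $\Lambda^{\pm}_{\phi,h}$. The main obstacle is precisely this last step: the diagonal closure argument and the translation length estimate are essentially formal once Theorem~\ref{prop:KL6}, Proposition~\ref{prop:disj}, and Lemma~\ref{intersection-total} are in hand, whereas the containment $L_{BFH}(\phi) \subseteq \Lambda^{\pm}_{\phi,h}$ requires careful bookkeeping of the train track combinatorics and bounded cancellation.
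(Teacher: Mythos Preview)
Your argument is correct and largely parallels the paper's, but you work much harder than necessary at the one step you flag as ``substantive''. The paper obtains the inclusion $L_{BFH}(\phi)\subseteq \Lambda^{\pm}_{\phi,h}$ in one line, without any train track bookkeeping: Theorem~\ref{prop:KL6} states not only that $L(T_-)=\diag(L_{BFH}(\phi))$ but also that $L_{BFH}(\phi)$ is the \emph{unique minimal sublamination} of $L(T_-)$. Since you have already shown $\Lambda^{\pm}_{\phi,h}\subseteq L(T_-)$ and $\Lambda^{\pm}_{\phi,h}$ is a nonempty algebraic lamination (the orbit $\{\phi^n([h])\}$ is infinite because $\phi$ is atoroidal), it must contain a minimal sublamination, hence must contain $L_{BFH}(\phi)$. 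Your bounded-cancellation argument is valid in outline, but it duplicates work already encapsulated in Theorem~\ref{prop:KL6}.

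Apart from this, your proof and the paper's coincide: same translation-length estimate for $\Lambda_\phi\subseteq L(T_-)$, same use of Mitra's theorem and Lemma~\ref{intersection-total} to promote diagonal closure of the union to diagonal closure of each piece via Proposition~\ref{prop:disj}. The only other difference is the order of operations (the paper first establishes $\diag(\Lambda_\phi)=L(T_-)$ and then proves diagonal closure of $\Lambda_\phi$, whereas you reverse these), which is immaterial.
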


\begin{proof}
Recall 
from subsection \ref{trees-graphs}
that $T_-\phi=\frac{1}{\lambda_-}T_-$ with $\lambda_- > 1$, so that for every $h\in
F_N \smallsetminus \{ 1 \}$ and $n \geq 1$ we have:
\[
||\phi^n([h])||_{T_-}=||h||_{T_-\phi^n}=
\frac{1}{\lambda_-^n} ||h||_{T_-} \overset{n \to \infty}{\longrightarrow} 0.
\]
By Definition \ref{Mitra-defn} we have $\Lambda_{\phi, h}^\pm = L(\{\phi^n([h]) \mid n \geq 0 \})$. Thus Lemma \ref{subset-dual-lam} implies $\Lambda^\pm_{\phi, h} \subset L(T_-)$. Since $\Lambda_\phi$ is the union of all $\Lambda^\pm_{\phi, h}$, we deduce:
$$\Lambda_{\phi} \subset L(T_-)$$

From Theorem \ref{prop:KL6} we know that $L_{BFH}(\phi)$ is the only minimal sublamination of $L(T_-)$, so that it has to be contained in any 
sublamination of $L(T_-)$, such as any of the $\Lambda_{\phi, h}^\pm$, and thus in particular in $\Lambda_{\phi}$. 
We obtain:
$$L(T_-) = \diag(L_{BFH}(\phi)) \subset \diag(\Lambda_\phi) 
\subset \diag(L(T_-)) =  L(T_-)$$
Thus all these laminations must be equal. 

By symmetry, we obtain the analogous equalities for $\phi^{-1}$ and $T_+$. From Remark \ref{minimal-distinct} we know that the laminations $L_{BFH}(\phi)$ and $L_{BFH}(\phi^{-1})$ are both minimal, and that they are distinct. Thus Lemma \ref{disjoint-extensions}
implies that $\diag(L_{BFH}(\phi)) \, \cap\,  \diag(L_{BFH}(\phi^{-1})) = \emptyset$. Above we derived $\diag(L_{BFH}(\phi)) = \diag(\Lambda_\phi)$ and $\diag(L_{BFH}(\phi^{-1})) = \diag(\Lambda_{\phi^{-1}})$, so that we have:
$$\diag(\Lambda_{\phi}) \cap \diag(\Lambda_{\phi^{-1}}) = \emptyset$$
By Remark \ref{diagonally-closed-fiber} the union $\Lambda_{\phi} \, \cup\, \Lambda_{\phi^{-1}}$ is diagonally closed, so that we see from Lemma \ref{intersection-total} that both, $\Lambda_{\phi}$ and $\Lambda_{\phi^{-1}}$ must also be diagonally closed. Hence the above derived equality $\diag(\Lambda_{\phi}) = L(T_-) = \diag(L_{BFH}(\phi))$ specifies to 
$$\Lambda_{\phi} = L(T_-) = \diag(L_{BFH}(\phi))\,.$$
\end{proof}

Proposition~\ref{prop:step1} and Theorem~\ref{thm:fiber} immediately imply:

\begin{cor}\label{cor:step1}
Let  $\phi\in\Out(F_N)$ be an atoroidal iwip and let 
$\hat \iota: \partial
F_N\to\partial G_\phi$ be the Cannon-Thurston map.

Then for $(X,Y)\in \partial^2 F_N$ we have $\hat \iota(X)=\hat \iota(Y)$ if
and only if 
\[
(X,Y)\in L(T_+(\phi))\cup   L(T_-(\phi)).
\]
\qed
\end{cor}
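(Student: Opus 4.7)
The plan is straightforward: this corollary is intended as an essentially immediate consequence of Proposition~\ref{prop:step1} combined with Theorem~\ref{thm:fiber}. So I would simply chain these two results together, the only small care being to apply Proposition~\ref{prop:step1} in both directions (to $\phi$ and to $\phi^{-1}$).

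First, I would invoke Theorem~\ref{thm:fiber}, which is Mitra's characterization of the Cannon-Thurston fibers: for $(X,Y) \in \partial^2 F_N$ one has $\hat\iota(X) = \hat\iota(Y)$ if and only if $(X,Y) \in \Lambda_\phi \cup \Lambda_{\phi^{-1}}$. Hence it suffices to show that
\[
\Lambda_\phi \cup \Lambda_{\phi^{-1}} = L(T_-(\phi)) \cup L(T_+(\phi)).
\]

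Next, I would apply Proposition~\ref{prop:step1}, which gives the equality $\Lambda_\phi = L(T_-(\phi))$. The key observation is that, since $\phi$ is an atoroidal iwip, so is $\phi^{-1}$ (both the iwip and the atoroidal property are invariant under inversion, as mentioned in Remark~\ref{hyp-ator-surf}). Therefore Proposition~\ref{prop:step1} applies equally well to $\phi^{-1}$ and yields $\Lambda_{\phi^{-1}} = L(T_-(\phi^{-1}))$. Since the repelling fixed tree of $\phi^{-1}$ under the action of $\Out(F_N)$ on $\CVNbar$ is precisely the attracting fixed tree of $\phi$, one has $T_-(\phi^{-1}) = T_+(\phi)$, and hence $\Lambda_{\phi^{-1}} = L(T_+(\phi))$.

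Combining these two identifications with Theorem~\ref{thm:fiber} yields the stated conclusion. There is no real obstacle here; the only thing worth flagging is the bookkeeping identification $T_-(\phi^{-1}) = T_+(\phi)$, which is immediate from the definitions of $T_\pm$ via North-South dynamics on $\CVNbar$.
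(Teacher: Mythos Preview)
Your proposal is correct and matches the paper's approach exactly: the paper simply states that Proposition~\ref{prop:step1} and Theorem~\ref{thm:fiber} immediately imply the corollary, and your write-up just spells out this immediate implication (including the bookkeeping that $\phi^{-1}$ is also an atoroidal iwip and $T_-(\phi^{-1}) = T_+(\phi)$).
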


We denote by $\sim_\phi$ the equivalence relation defined by the lamination lamination $L_\phi := L(T_+(\phi))\cup   L(T_-(\phi))$, see Definition \ref{laminations-equivalence}.

\begin{prop}
\label{quotient-homeomorphism}
Let  
$\phi\in\Out(F_N)$ be an atoroidal iwip and let $\hat \iota: \partial
F_N\to\partial G_\phi$ be the Cannon-Thurston map.

Then $\partial G_\phi$ is homeomorphic to $\partial F_N/\sim_\phi$, where the latter is considered with the quotient topology. 
\end{prop}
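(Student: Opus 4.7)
The plan is to verify that $\hat\iota$ factors through $\partial F_N/\sim_\phi$ as a continuous bijection onto the compact Hausdorff space $\partial G_\phi$, and then invoke the classical fact that a continuous bijection from a compact space to a Hausdorff space is automatically a homeomorphism.

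First I would observe that, by Corollary \ref{cor:step1}, the fibers of the Cannon-Thurston map $\hat\iota$ are precisely the $\sim_\phi$-equivalence classes in $\partial F_N$: indeed, two points $X,Y \in \partial F_N$ satisfy $\hat\iota(X)=\hat\iota(Y)$ if and only if $(X,Y)\in L_\phi$, and since ``having the same image under $\hat\iota$'' is already an equivalence relation, it coincides with the equivalence relation $\sim_\phi$ it generates. Combined with surjectivity of $\hat\iota$ (Proposition \ref{CT-surjectivity}), this provides a set-theoretic bijection $\overline{\hat\iota}: \partial F_N/\sim_\phi \to \partial G_\phi$. Continuity of $\overline{\hat\iota}$ follows from continuity of $\hat\iota$ and the universal property of the quotient topology on $\partial F_N/\sim_\phi$.

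The step that requires slightly more care, and which I expect to be the main technical point, is checking that Proposition \ref{compact-quotient} applies to produce a compact Hausdorff quotient $\partial F_N/\sim_\phi$. For this I need to verify that $L_\phi = L(T_+(\phi)) \cup L(T_-(\phi))$ is itself diagonally closed. Suppose $(X,Y) \in \diag(L_\phi)$, witnessed by a chain $X=Z_0, Z_1, \dots, Z_m = Y$ with each consecutive pair a leaf of $L_\phi$. By Proposition \ref{prop:step1} (applied to $\phi$ and to $\phi^{-1}$), each of $L(T_\pm(\phi))$ is diagonally closed individually, so a chain whose links lie entirely in one of $L(T_+(\phi))$ or $L(T_-(\phi))$ immediately produces $(X,Y)$ in the same lamination. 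If the chain mixes both laminations, then some intermediate $Z_i$ would be a common half-leaf of $L(T_+(\phi))$ and $L(T_-(\phi))$, which is forbidden by Proposition \ref{prop:disj}. Hence $L_\phi = \diag(L_\phi)$, and Proposition \ref{compact-quotient} yields that $\partial F_N/\sim_\phi$ is compact Hausdorff.

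Finally, $\partial G_\phi$ is the Gromov boundary of a word-hyperbolic group, hence Hausdorff (in fact, metrizable). The continuous bijection $\overline{\hat\iota}$ from the compact space $\partial F_N/\sim_\phi$ to the Hausdorff space $\partial G_\phi$ is therefore closed, and thus a homeomorphism. This completes the proof.
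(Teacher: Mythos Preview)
Your proof is correct and follows essentially the same route as the paper: factor $\hat\iota$ through the quotient, apply Proposition~\ref{compact-quotient}, and conclude via the compact-to-Hausdorff bijection argument. The only minor difference is that you verify $L_\phi$ is diagonally closed by a direct structural argument (each $L(T_\pm)$ diagonally closed plus no shared half-leaves), whereas the paper relies implicitly on Remark~\ref{diagonally-closed-fiber} (equivalently, on your own first-paragraph observation that $L_\phi$ coincides with the fibers of $\hat\iota$ and is therefore automatically transitively closed); note that this makes your second paragraph technically redundant, and that Proposition~\ref{prop:tc} would be a cleaner citation than Proposition~\ref{prop:step1} for the individual diagonal closure of $L(T_\pm)$.
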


\begin{proof}
We 
apply Proposition \ref{compact-quotient} to L = $L_\phi$ to obtain that $\partial\FN/\sim_\phi$
is a compact Hausdorff topological space. By Corollary~\ref{cor:step1}, the surjective map $\hat \iota: \partial \FN \to \partial G_\phi$ induces a well defined quotient map 
map $r: \partial F_N/\sim_\phi \to \partial G_\phi$, which is by definition continuous and injective, and thus, by the surjectivity of $\hat \iota$, bijective.
Thus $r$
is a continuous bijection between two compact Hausdorff topological spaces $\partial F_N/\sim_\phi$ and $\partial G_\phi$, and therefore $r$ is the desired homeomorphism.
\end{proof}

\begin{prop}
\label{iota-split}
The map $\hat \iota: \partial \FN \to \partial G_\phi$ splits over the maps $\mathcal Q_+ : \partial \FN \to \hat T_+(\phi)$ and $\mathcal Q_- : \partial \FN \to \hat T_-(\phi)$, and thus induces well defined maps
$$\mathcal R_+: \hat T_+(\phi) \to \partial G_\phi
\qquad {\rm and} \qquad
\mathcal R_-: \hat T_-(\phi) \to \partial G_\phi$$
which are surjective, $\FN$-equivariant, and furthermore continuous with respect to both, the metric and the observer's topology on $\hat T_+(\phi)$ and $\hat T_-(\phi)$.
\end{prop}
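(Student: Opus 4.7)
The plan is to leverage Corollary \ref{cor:step1} to show that $\hat\iota$ descends through $\mathcal Q_\pm$. Suppose $X, Y \in \partial \FN$ satisfy $\mathcal Q_-(X) = \mathcal Q_-(Y)$: if $X = Y$ we trivially have $\hat\iota(X) = \hat\iota(Y)$, while if $X \neq Y$ then $(X, Y) \in L(T_-(\phi))$ by Proposition \ref{prop:Q}(1), hence $(X,Y) \in L_\phi$, so $\hat\iota(X) = \hat\iota(Y)$ by Corollary \ref{cor:step1}. Since $\mathcal Q_-$ is surjective onto $\hat T_-(\phi)$ (Proposition \ref{Q-map}), this produces a well-defined set-theoretic map $\mathcal R_-: \hat T_-(\phi) \to \partial G_\phi$ characterized by $\hat\iota = \mathcal R_- \circ \mathcal Q_-$, and the same argument yields $\mathcal R_+$ with $\hat\iota = \mathcal R_+ \circ \mathcal Q_+$.

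Surjectivity of $\mathcal R_\pm$ is then immediate from Proposition \ref{CT-surjectivity}. For $\FN$-equivariance, given $g \in \FN$ and $P \in \hat T_\pm(\phi)$, pick any $X \in \mathcal Q_\pm^{-1}(P)$ and compute
\[
\mathcal R_\pm(gP) = \mathcal R_\pm(g \mathcal Q_\pm(X)) = \mathcal R_\pm(\mathcal Q_\pm(gX)) = \hat\iota(gX) = g \hat\iota(X) = g \mathcal R_\pm(P),
\]
using the $\FN$-equivariance of $\mathcal Q_\pm$ (Proposition \ref{Q-map}) and of $\hat\iota$.

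The crux is continuity. By Proposition \ref{prop:tc} the lamination $L(T_-(\phi))$ is diagonally closed, so Proposition \ref{compact-quotient} ensures that $\partial \FN/\!\sim_{L(T_-(\phi))}$ is compact Hausdorff. By Remark \ref{mod-L(T)} this quotient is canonically identified with $\hat T_-(\phi)$ equipped with the observers' topology, and the associated quotient map is precisely $\mathcal Q_-$. Hence $\mathcal Q_-$ is a topological quotient map onto $\hat T_-(\phi)$ with the observers' topology, and the continuity of $\hat\iota$ together with the factorization $\hat\iota = \mathcal R_- \circ \mathcal Q_-$ yields, via the universal property of the quotient topology, continuity of $\mathcal R_-$ with respect to the observers' topology on $\hat T_-(\phi)$. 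The same reasoning handles $\mathcal R_+$.

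Finally, since the metric topology on $\hat T_\pm(\phi)$ is finer than the observers' topology (as recalled in subsection \ref{trees-graphs}), any map continuous with respect to the observers' topology is automatically continuous with respect to the metric topology. I do not foresee a substantive obstacle here: the real work was already carried out in Corollary \ref{cor:step1} and in the $\mathcal Q$-map description of $L(T)$ (Proposition \ref{prop:Q}), after which the present statement reduces to assembling quotient-topology generalities.
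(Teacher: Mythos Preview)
Your proof is correct and follows essentially the same route as the paper's own argument. The paper's proof is terser, invoking Proposition~\ref{quotient-homeomorphism} and Proposition~\ref{prop:Q}(1) together with the comparison of the metric and observers' topologies, whereas you unpack the continuity step more explicitly via Remark~\ref{mod-L(T)} and the universal property of the quotient topology; but the underlying mechanism is identical.
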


\begin{proof}
This follows directly from Proposition \ref{quotient-homeomorphism} and Remark \ref{prop:Q} (1) together with the fact that on the trees $\hat T_+(\phi)$ and $\hat T_-(\phi)$ the metric topology is stronger than the observer's topology, with respect to which the maps $\mathcal Q_+$ and $\mathcal Q_-$ are continuous 
(see Proposition \ref{Q-map} (1)). 
\end{proof}

For the sequel we would like to note the following properties of the above defined map $\mathcal R_-: \hat T_-(\phi) \to \partial G_\phi$, where we use the abbreviations $T_+ := T_+(\phi)$ and $T_- := T_-(\phi)$:

\begin{lem}
\label{specification}
(1)
$\mathcal R_-(\bar T_-) \cap \mathcal R_-(\hat T_- \smallsetminus \bar T_-) = \emptyset$.

\smallskip
\noindent
(2)
The restriction $\mathcal R_-|_{\bar T_-}$ of $\mathcal R_-$ to the metric completion of $T_-$ is injective.

\end{lem}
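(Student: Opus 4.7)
Both assertions will follow from a single observation: no point $X\in\cal Q_-^{-1}(\bar T_-)$ is a half-leaf of $L(T_+)$. My plan is to establish this key claim first, and then to deduce (1) and (2) uniformly by combining Corollary~\ref{cor:step1} with Proposition~\ref{prop:Q}(3).

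For the key claim I will argue by contradiction, assuming that some $X\in\cal Q_-^{-1}(\bar T_-)$ is a half-leaf of $L(T_+)$. On one hand, Remark~\ref{ends-halfleaves} applied to $T_-$ gives $L(X)\subset L(T_-)=\diag(L_{BFH}(\phi))$, the last equality being Theorem~\ref{prop:KL6}. On the other hand, applying that same theorem to $\phi^{-1}$ (and using $T_-(\phi^{-1})=T_+(\phi)$) yields $L(T_+)=\diag(L_{BFH}(\phi^{-1}))$, so Remark~\ref{equal-half-leaves} will let me upgrade ``half-leaf of $L(T_+)$'' to ``half-leaf of $L_{BFH}(\phi^{-1})$''. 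Since $L_{BFH}(\phi^{-1})$ is minimal by Proposition~\ref{minimal-distinct}(1), Remark~\ref{minimal-laminations}(3) then forces $L(X)=L_{BFH}(\phi^{-1})$. Combining, $L_{BFH}(\phi^{-1})$ would be a minimal sublamination of $L(T_-)$, which by Theorem~\ref{prop:KL6} must coincide with $L_{BFH}(\phi)$, contradicting Proposition~\ref{minimal-distinct}(2).

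Granting the claim, parts (1) and (2) will be immediate. For (2), given $P_1\neq P_2$ in $\bar T_-$ with $\cal R_-(P_1)=\cal R_-(P_2)$, I will pick preimages $X_i\in\cal Q_-^{-1}(P_i)$; then $X_1\neq X_2$ and $\hat\iota(X_1)=\hat\iota(X_2)$, so Corollary~\ref{cor:step1} places $(X_1,X_2)$ in $L(T_+)\cup L(T_-)$. The $L(T_-)$ alternative would force $P_1=P_2$ by Proposition~\ref{prop:Q}(1), while the $L(T_+)$ alternative would make $X_1$ a half-leaf of $L(T_+)$ with $\cal Q_-(X_1)\in \bar T_-$, violating the key claim. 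Part (1) will follow by the identical argument: for $P\in \bar T_-$ and $Q\in\hat T_-\smsm \bar T_-=\partial T_-$ with $\cal R_-(P)=\cal R_-(Q)$, I pick $X\in\cal Q_-^{-1}(P)$ and let $Y$ be the unique preimage of $Q$ furnished by Proposition~\ref{prop:Q}(3), and then the same dichotomy applies.

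The only delicate step will be the key claim itself, and the main conceptual hurdle is to notice that one may exchange a half-leaf of the (non-minimal) dual lamination $L(T_+)$ for a half-leaf of the minimal sublamination $L_{BFH}(\phi^{-1})$. Once this exchange is made, Remark~\ref{minimal-laminations}(3) pins $L(X)$ down exactly, and the uniqueness of the minimal sublamination from Theorem~\ref{prop:KL6} together with the distinctness of $L_{BFH}(\phi)$ and $L_{BFH}(\phi^{-1})$ from Proposition~\ref{minimal-distinct}(2) close the argument.
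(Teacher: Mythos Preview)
Your proof is correct and follows essentially the same approach as the paper: both arguments reduce to showing that no half-leaf of $L(T_+)$ lies in $\cal Q_-^{-1}(\bar T_-)$, and then deduce (1) and (2) from this together with Corollary~\ref{cor:step1} and Proposition~\ref{prop:Q}. The paper compresses your key claim into a single citation of Proposition~\ref{prop:disj} (together with Proposition~\ref{prop:Q}(2)), whereas you unpack this step via Remark~\ref{equal-half-leaves}, minimality of $L_{BFH}(\phi^{-1})$, and the uniqueness of the minimal sublamination in Theorem~\ref{prop:KL6}; your version is more explicit, since Proposition~\ref{prop:disj} as stated only asserts disjointness of half-leaf sets rather than the stronger ``not an end'' statement actually needed.
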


\begin{proof}
Since $\hat \iota = \mathcal R_- \circ \mathcal Q_-$, the only points on which $\mathcal R_-$ is non-injective are the $\mathcal Q_-$-images of the half-leaves of the lamination $L(T_+)$, by Corollary \ref{cor:step1}. But from Proposition \ref{prop:disj} we know that a half-leaf $X$ of $L(T_+)$ cannot satisfy  $L(X) \subset L(T_-)$; thus Proposition \ref{prop:Q} (2) ensures that $\mathcal Q_-(X)$ must lie in $\partial T_-$. This shows both assertions (1) and (2).
\end{proof}

\section{The fibers of the Cannon-Thurston map}
\label{sec:main-results}

Throughout this section we assume that $\phi\in \Out(F_N)$ is an atoroidal iwip, that $\Phi\in \Aut(F_N)$ is a representative of the outer automorphism class $\phi$ and that the mapping torus group
$G_\phi$ is given by presentation $(\clubsuit)$ (see Section~\ref{iwip-autos}) in the generators $F_N, t_\Phi$. 
We will use 
the abbreviations $T_+ := T_+(\phi)$ and $T_- := T_-(\phi)$.
Before starting the proofs of our main results we need to establish some terminology for the boundary points of $G_\phi$:

\smallskip

A point $S \in \partial G_\phi$ is called {\em rational} if it is the fixed point of an element $g \in G \smallsetminus \{1\}$. We write $S = g^\infty$ if $S = \underset{n \to \infty}{\lim} g^n$ (in the topology of the Gromov compactification of hyperbolic groups).

Note  that the $G_\phi$-action on $\partial G_\phi$ induces canonically an action of $\langle \phi \rangle \cong G_\phi/\FN$ on the $\FN$-orbits of points of $\partial G_\phi$. 
We have:

\begin{lem}\label{lem:per}
Let $S \in \partial G_\phi$. Then $S=g^\infty$ for some $g\in G_\phi, g\not\in F_N$ if and only if the $\FN$-orbit of $S$ is $\phi$-periodic.
\end{lem}
\begin{proof}
Suppose that  the $\FN$-orbit of $S$ is $\phi$-periodic. Then there
exist $n\ge 1$ and $w\in F_N$ such that $t_\Phi^n S=wS$. Hence $gS=S$
for $g=w^{-1}t_\Phi ^n$. Since $G_\phi$ is torsion-free word-hyperbolic and $g\ne 1$, the fact that $gS=S$ implies that $S=g^\infty$ or $S=g^{-\infty}$, as required.

Suppose now that $S=g^\infty$ for some $g\in G_\phi, g\not\in F_N$. Thus $g=ut_\Phi^n$ for some $n\ne 0$ and $u\in F_N$.  Then $gS=S$, so that $ut_\Phi^nS=S$ and $t_\Phi^nS=u^{-1}S$. Thus the $\FN$-orbit of $S$ is $\phi$-periodic, as required.
\end{proof}

\begin{defn}
\label{degree}
 Let $S\in \partial G_\phi$.  We define:

\smallskip
\noindent
(1)
The \emph{degree} $\deg(S)$ of $S$ denotes the cardinality of the full preimage of $S$ under the map $\hat \iota: \partial F_N\to \partial G_\phi$.

\smallskip
\noindent
(2)
We define the following {\em classes} of points $S \in \partial G_\phi$:
 \begin{enumerate}
 \item[(i)] the point $S$ is \emph{simple} if $\deg(S)=1$;
 \item[(ii)] the point $S$ is \emph{regular} if $\deg(S) = 2$;
 \item[(iii)] the point $S$ is \emph{singular} if $\deg(S) \geq 3$.
 \end{enumerate}
 \smallskip

\smallskip
\noindent
(3)
We further subdivide the classes of regular and singular points into two {\em types}, as follows:
\begin{enumerate}
\item[(a)]
$S$ is of \emph{$\phi$-type} if
for every two distinct $\hat \iota$-preimages $X,Y\in \partial F_N$ of $S$ we have $(X,Y)\in 
L(T_-)$, and 
\item[(b)]
$S$ is of \emph{$\phi^{-1}$-type} if
for every two distinct $\hat \iota$-preimages $X,Y\in \partial F_N$ of $S$ we have $(X,Y)\in L(T_+)$.
\end{enumerate}
\end{defn}

Notice that, by 
Corollary~\ref{cor:step1} and Proposition~\ref{prop:disj}, 
if $S$ is not simple, then it must either be of $\phi$- or of $\phi^{-1}$-type, so that one obtains:

\begin{prop}
\label{rational-simple}
If $X \in \partial \FN$ is rational, then $\hat \iota(X)$ must be simple.
\qed
\end{prop}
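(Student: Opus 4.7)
The plan is to exploit loxodromic dynamics in both boundaries. Since $X$ is rational, $X = w^\infty \in \partial \FN$ for some $w \in \FN \smallsetminus \{1\}$. As $G_\phi$ is word-hyperbolic and $w \ne 1$, $w$ acts loxodromically on $\partial G_\phi$ with two distinct fixed points $S := w^\infty$ and $S' := w^{-\infty}$ in $\partial G_\phi$. Since $\hat \iota$ extends continuously the inclusion $\FN \hookrightarrow G_\phi$ (so that it commutes with the limits of $w^{\pm n}$ taken in $\FN \cup \partial\FN$ versus in $G_\phi \cup \partial G_\phi$), one obtains $\hat \iota(w^\infty) = S$ and $\hat \iota(w^{-\infty}) = S'$.

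It then suffices to show $\hat \iota^{-1}(S) = \{X\}$. This preimage is closed, by continuity of $\hat\iota$ and Hausdorffness of $\partial G_\phi$, and $w$-invariant, by $\FN$-equivariance of $\hat \iota$ combined with $w\cdot S = S$. Suppose some $Y \in \hat\iota^{-1}(S)$ satisfies $Y \ne X$ and $Y \ne w^{-\infty}$. Then the $w^{-1}$-orbit $\{w^{-n}Y\}_{n \geq 0}$ converges in $\partial \FN$ to $w^{-\infty}$, so that by closedness of the preimage we would obtain $w^{-\infty} \in \hat\iota^{-1}(S)$; but $\hat\iota(w^{-\infty}) = S' \ne S$, a contradiction. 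The remaining possibility $Y = w^{-\infty}$ is ruled out by the same identity. Hence $\hat\iota^{-1}(S) = \{X\}$, so $\hat\iota(X)$ is simple.

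The argument is essentially elementary dynamics on the two boundaries; its only ingredients are that a non-trivial element of a word-hyperbolic group has exactly two distinct fixed points on the boundary, and that the Cannon-Thurston map commutes with passage to the limit. I do not foresee any substantial obstacle. Notably, neither Theorem~\ref{thm:A} nor the $\cal Q$-map machinery is needed here, so the proposition serves as a clean preliminary step before the finer analysis of regular and singular fibers in the propositions that follow.
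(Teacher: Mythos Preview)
Your proof is correct and takes a genuinely different, more elementary route than the paper's. The paper deduces the proposition from Corollary~\ref{cor:step1} and Proposition~\ref{prop:disj}: if $\hat\iota(X)$ were not simple it would be of $\phi$-type or of $\phi^{-1}$-type, so $X$ would be a half-leaf of $L(T_-)$ or of $L(T_+)$; one then invokes the (implicit but standard) fact that a rational point $w^\infty\in\partial F_N$ cannot be a half-leaf of these laminations. Thus the paper's argument rests on the full lamination machinery built up earlier, in particular Mitra's theorem and the identification $\Lambda_\phi=L(T_-)$.

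Your argument bypasses all of this: it uses only the continuity and $F_N$-equivariance of $\hat\iota$ together with the North--South dynamics of $w$ on both $\partial F_N$ and $\partial G_\phi$. This has two advantages: it is self-contained (needing nothing beyond elementary hyperbolic-group boundary dynamics), and it applies verbatim to any hyperbolic $\phi\in\Out(F_N)$, not just atoroidal iwips. The paper's route, by contrast, stays within the lamination framework that drives Section~\ref{sec:main-results} and dovetails with the $\phi$-type/$\phi^{-1}$-type dichotomy used in the subsequent results.
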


Note that the degree, the class and the type of the points $S$ in $\partial G_\phi$, and also whether or not $S$ is rational, are properties which are invariant under the action of $G_\phi$. This is a direct consequence of the $G_\phi$-equivariance of the map $\hat \iota$.

Thus in particular for every $\FN$-orbit $[S]_{F_N}$ of points $S \in \partial G_\phi$ the {\em degree} is well defined through $\deg([S]_{F_N}) := \deg(S)$.

 \begin{thm}\label{thm:types}
 Let $\phi\in \Out(F_N)$ be an atoroidal iwip  and let $\hat \iota: \partial F_N\to \partial G_\phi$ be the Cannon-Thurston map.
Then 
one has:
\[
\sum
\left( \deg([S]_{F_N})-2\right) \,\,\,\le \,\,\, 2N-2
\] 
where the summation is taken over all $F_N$-orbits $[S]_{F_N}$ of singular points $S$ in $\partial G_\phi$ that are of $\phi$-type.

The same inequality holds if the summation is taken over all $F_N$-orbits $[S]_{F_N}$ of singular points of $\phi^{-1}$-type.

 \end{thm}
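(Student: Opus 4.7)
\medskip

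\noindent
\textbf{Proof proposal.} The plan is to transfer the counting problem about fibers of $\hat \iota$ to the $\mathcal Q$-index bound for $T_-=T_-(\phi)$ given by Proposition~\ref{prop:Q-ind}, using the factorization $\hat \iota = \mathcal R_- \circ \mathcal Q_-$ from Proposition~\ref{iota-split}. I will treat only the $\phi$-type case; the $\phi^{-1}$-type case is identical upon replacing $T_-$ by $T_+$.

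First, let $S \in \partial G_\phi$ be a singular point of $\phi$-type. By Definition~\ref{degree}(3)(a), any two distinct preimages $X,Y \in \hat \iota^{-1}(S)$ satisfy $(X,Y) \in L(T_-)$, so by Proposition~\ref{prop:Q}(1) all elements of $\hat \iota^{-1}(S)$ have a common image $x_S := \mathcal Q_-(X) \in \hat T_-$. Conversely, if $\mathcal Q_-(Y) = x_S$ for some $Y \in \partial F_N$, then $(X,Y) \in L(T_-) \subset L(T_+) \cup L(T_-)$, so by Corollary~\ref{cor:step1} we get $\hat \iota(Y) = \hat\iota(X)=S$. Hence $\hat \iota^{-1}(S) = \mathcal Q_-^{-1}(x_S)$, and in particular $\deg(S) = \mathrm{card}(\mathcal Q_-^{-1}(x_S))$. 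Since $\deg(S) \geq 3$, Proposition~\ref{prop:Q}(3) prevents $x_S$ from lying in $\partial T_-$, so $x_S \in \bar T_-$ and $\mathrm{ind}(x_S) = \deg(S) - 2 \geq 1$.

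Next, I use the $F_N$-equivariance of $\mathcal Q_-$ to see that the assignment $S \mapsto x_S$ descends to a well-defined map from $F_N$-orbits of singular $\phi$-type points in $\partial G_\phi$ to $F_N$-orbits in $\bar T_-$. This map is injective: if $x_{S_1}$ and $x_{S_2}$ are in the same $F_N$-orbit of $\bar T_-$, say $x_{S_2} = w \cdot x_{S_1}$, then by the $F_N$-equivariance of $\mathcal R_-$ and Lemma~\ref{specification}(2) (the injectivity of $\mathcal R_-|_{\bar T_-}$), we get $S_2 = \mathcal R_-(x_{S_2}) = w \cdot \mathcal R_-(x_{S_1}) = w \cdot S_1$, so $[S_1]_{F_N} = [S_2]_{F_N}$. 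Moreover, $\deg([S]_{F_N}) - 2 = \mathrm{ind}([x_S]_{F_N})$.

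Summing over all $F_N$-orbits of singular $\phi$-type points and applying Proposition~\ref{prop:Q-ind} yields
\[
\sum_{[S]_{F_N}\,\phi\text{-type singular}} \bigl(\deg([S]_{F_N}) - 2\bigr) \,=\, \sum_{[S]_{F_N}} \mathrm{ind}([x_S]_{F_N}) \,\leq\, \mathrm{ind}(T_-) \,\leq\, 2N-2,
\]
which is the desired bound. The main technical point that requires care is the second paragraph, namely that $x_S$ actually lies in $\bar T_-$ (rather than in $\partial T_-$), so that both the injectivity provided by Lemma~\ref{specification}(2) and the index bound of Proposition~\ref{prop:Q-ind} (whose summation is over orbits in $\bar T_-$) can be legitimately applied. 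Once this is established, the rest of the argument is essentially bookkeeping built on the factorization $\hat \iota = \mathcal R_- \circ \mathcal Q_-$.
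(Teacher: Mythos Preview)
Your proof is correct and follows essentially the same route as the paper's: both arguments identify the $\hat\iota$-fiber of a singular $\phi$-type point $S$ with a $\mathcal Q_-$-fiber over a point $x_S\in\bar T_-$, pass to $F_N$-orbits, and invoke the Coulbois--Hilion $\mathcal Q$-index bound (Proposition~\ref{prop:Q-ind}). The only cosmetic differences are that the paper defines $x_S$ via $\mathcal R_-$ (using Lemma~\ref{specification}(1) and~(2) to pin it down uniquely) and establishes a full bijection of orbits, whereas you define $x_S$ directly via $\mathcal Q_-$ and only check injectivity of the orbit map, which is all that the inequality requires; your citation of Lemma~\ref{specification}(2) in the injectivity step is in fact unnecessary, since $\mathcal R_-(x_S)=S$ already follows from the factorization $\hat\iota=\mathcal R_-\circ\mathcal Q_-$.
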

 
\begin{proof}
Every singular (or regular) point $S \in \partial G_\phi$ which is of $\phi$-type has by definition as $\hat \iota$-preimage only half-leaves of the lamination $L(T_-)$,
and those
are mapped by $\mathcal Q_-$ to the metric completion $\bar T_-$ (by Proposition \ref{prop:Q} (3)). 
From $\hat i = \mathcal R_- \circ \mathcal Q_-$ it follows that
$S$ must be contained in $\mathcal R_-(\bar T_-)$.

From
Lemma \ref{specification} (1) we know that $\mathcal R_-(\bar T_-) \cap \mathcal R_-(\hat T_- \smallsetminus \bar T_-) = \emptyset$.  
Furthermore we know from Lemma \ref{specification} (2) that 
$\mathcal R_-|_{\bar T_-}$ is injective, so that for any singular $S \in \partial G_\phi$ of $\phi$-type there is a unique $x_S \in \bar T_-$ with $\mathcal R_-(x_S) = S$.
It follows that the $\hat \iota$-fiber of 
$S$ must be equal to the $\mathcal Q_-$-fiber of 
the point $x_S \in \bar T_-$, and hence 
$\deg(S) -2$ must be equal to $\ind(x_S)$.  

Conversely, if $x \in \bar T_-$ has $3$ or more distinct $\mathcal Q_-$-preimages, then those belong to $L(T_-)$ and (again by $\hat i = \mathcal R_- \circ \\mathcal Q_-$) are mapped by $\hat \iota$ to the point $S := \mathcal R_-(x)$, so that $S\in \partial G_\phi$ is a singular point of $\phi$-type, with $x = x_S$ as above.

From the $\FN$-equivariance of $\\mathcal R_-$ it follows that the latter induces a bijection between $\FN$-orbits in $\bar T_-$ and $\FN$-orbits in $\mathcal R_-(\bar T_-) \subset \partial G_\phi$, and hence in particular between $\FN$-orbits of points $x \in \bar T_-$ with $\ind(x) > 0$ and $\FN$-orbits of singular points $S$ in $\partial G_\phi$ that are of $\phi$-type.

Thus we obtain now immediately that the desired inequality is a direct consequence of the $\mathcal Q$-index formula of Coulbois-Hilion \cite{CH10}, see Theorem \ref{prop:Q-ind}.
\end{proof}

As consequence we obtain a number of interesting insights:

\begin{thm}\label{thm:mainresult}
Let $\phi\in \Out(F_N)$ be an atoroidal iwip  and let $\hat \iota: \partial F_N\to \partial G_\phi$ be the Cannon-Thurston map. Then the following holds:
\begin{enumerate}
\item
For every $S\in \partial G_\phi$ we have:
$$\deg(S)\le 2N$$
\item
The number of $F_N$-orbits of singular points of $\phi$-type  (respectively of $\phi^{-1}$-type) in $\partial G_\phi$ 
satisfies:
$$ {\rm card}\{\FN\cdot S \subset \partial G_\phi \mid S \,\, {\rm singular\,\,of\,\,} \phi{\rm-type}\} \leq 2N -2$$
\item
Every singular point $S\in \partial G_\phi$  is rational. More precisely, there exists $g\in G_\phi \smallsetminus \FN$  such that $S=g^\infty$.  
\end{enumerate}
\end{thm}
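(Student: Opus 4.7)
The plan is to derive all three parts from Theorem~\ref{thm:types} together with a short dynamical argument inside the hyperbolic group $G_\phi$. Throughout I use the shorthands $T_+ := T_+(\phi)$ and $T_- := T_-(\phi)$.

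First, for part~(1): if $\deg(S) \leq 2$ the bound is trivial (recall $N \geq 3$ by Remark~\ref{hyp-ator-surf}). Otherwise $S$ is singular and, by Corollary~\ref{cor:step1} together with Proposition~\ref{prop:disj}, must be either of $\phi$-type or of $\phi^{-1}$-type. In the proof of Theorem~\ref{thm:types} we already identified, in the $\phi$-type case, a unique $\cal R_-$-preimage $x_S \in \bar T_-$ of $S$ and showed that the $\hat \iota$-fiber of $S$ coincides with the $\cal Q_-$-fiber of $x_S$, so that $\deg(S)-2 = \ind(x_S)$. Proposition~\ref{prop:Q-ind} then yields $\deg(S)-2 \leq \ind(T_-) \leq 2N-2$, and the $\phi^{-1}$-type case is symmetric. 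For part~(2), each $F_N$-orbit of a singular point of $\phi$-type contributes at least $1$ to the sum $\sum(\deg([S]_{F_N})-2) \leq 2N-2$ provided by Theorem~\ref{thm:types}, so there are at most $2N-2$ such orbits, and analogously at most $2N-2$ orbits of $\phi^{-1}$-type.

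Part~(3) is the main new content. The key observation is that, because $\hat \iota$ is $G_\phi$-equivariant (see subsection~\ref{CTmaps}), the $G_\phi$-action on $\partial G_\phi$ preserves the degree of every point. Since $F_N \normal G_\phi$ acts trivially on its own orbits in $\partial G_\phi$, the $G_\phi$-action descends to an action of the quotient $\langle \phi \rangle = G_\phi/F_N \cong \mathbb{Z}$ on the set of $F_N$-orbits in $\partial G_\phi$, and this induced action preserves the \emph{finite} set of singular $F_N$-orbits, whose cardinality is at most $4N-4$ by part~(2). Hence, for every singular $S \in \partial G_\phi$, there exists some integer $k \neq 0$ such that $\phi^k$ fixes the orbit $[S]_{F_N}$; equivalently, there exists $w \in F_N$ with $(wt^k) \cdot S = S$. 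Setting $g := wt^k \in G_\phi \smallsetminus F_N$, the element $g$ maps to a non-trivial element of the torsion-free quotient $G_\phi/F_N \cong \mathbb{Z}$, hence has infinite order in the word-hyperbolic group $G_\phi$. Consequently $g$ acts hyperbolically on $\partial G_\phi$ with fixed-point set $\{g^\infty, g^{-\infty}\}$, so $S \in \{g^\infty, g^{-\infty}\}$; replacing $g$ by $g^{-1}$ if needed produces $S = g^\infty$ with $g \in G_\phi \smallsetminus F_N$, as required.

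The only delicate step I foresee is verifying carefully that the $\langle \phi \rangle$-action on $F_N$-orbits of $\partial G_\phi$ is well defined and compatible with the conventions for $G_\phi = F_N \rtimes_\phi \langle t \rangle$ fixed in subsection~\ref{iwip-autos}; once those conventions are pinned down, the argument reduces to standard facts about hyperbolic elements of hyperbolic groups, so I do not expect any serious obstacle beyond this bookkeeping.
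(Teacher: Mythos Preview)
Your proposal is correct and follows essentially the same approach as the paper. Parts~(1) and~(2) are derived from Theorem~\ref{thm:types} just as the paper does (the paper is slightly terser, noting only that they are ``direct consequences'' of the inequality there), and your argument for part~(3)---that the induced $\langle\phi\rangle$-action on $F_N$-orbits must permute the finitely many singular orbits, hence some $g\in G_\phi\smallsetminus F_N$ fixes $S$, hence $S=g^{\pm\infty}$---is exactly the paper's argument, which the paper compresses by invoking the equivalence (recorded just before Definition~\ref{degree}) between rationality of $S$ via $g\in G_\phi\smallsetminus F_N$ and $\phi$-periodicity of $[S]_{F_N}$.
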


\begin{proof}
Assertions (1) and (2) are direct consequences of the inequality stated in Theorem \ref{thm:types}.  

By Theorem \ref{thm:types},  there are only finitely many $\FN$-orbits of singular points, so that necessarily each of them must be periodic under the action of $\phi$.
Hence assertion (3) of Theorem~\ref{thm:mainresult} follows from Lemma~\ref{lem:per}.

\end{proof}

\begin{prop}
\label{double-positive}
Let $S$ and $g$ be as in Theorem \ref{thm:mainresult} (3). If $S$ is of $\phi$-type then 
$g$ must be 
of the form $g = wt_\Phi^m$, with $w\in F_N$ and $m \geq 1$. 

Similarly, if $S$ is of $\phi^{-1}$-type then  $g$  is 
of the form
$g = v t_\Phi^m$, with $v\in F_N$ and $m \leq 1$.
\end{prop}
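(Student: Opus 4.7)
The strategy is to reduce the sign of $m$ to local dynamics of a suitable power of $g$ on $\partial \FN$, and then derive a contradiction from the North--South dynamics of $g$ on $\partial G_\phi$. The $\phi^{-1}$-type case will follow by an entirely symmetric argument after swapping the roles of $T_+$ and $T_-$, so I will spell out only the $\phi$-type case and aim for $m \geq 1$.

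First I would observe that by Theorem~\ref{thm:A} the fiber $\hat \iota^{-1}(S)$ is finite, and since $g$ fixes $S$ and $\hat\iota$ is $G_\phi$-equivariant the set $\hat \iota^{-1}(S)$ is $g$-invariant; hence some power $g^k$ with $k \geq 1$ fixes each of its elements pointwise. Writing $g^k = w_k t_\Phi^{mk}$ for a suitable $w_k \in \FN$, the element $g^k$ acts on $\partial \FN$ as an automorphism $\Psi \in \Aut(\FN)$ lifting $\phi^{mk}$. After replacing $k$ by a multiple if necessary, I may assume that $\Psi$ is a rotationless representative of $\phi^{mk}$, so that every fixed point of $\Psi$ on $\partial \FN$ is either attracting or repelling in the usual local dynamics sense.

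Second, I would invoke the Bestvina--Feighn--Handel train-track classification of boundary-fixed points of iwip representatives (\cite{BH92,BFH97}): the attracting fixed points of $\Psi$ on $\partial \FN$ are half-leaves of $L_{BFH}(\phi^{mk})$, while the repelling ones are half-leaves of $L_{BFH}(\phi^{-mk})$. Now each $X \in \hat \iota^{-1}(S)$ is a half-leaf of $L(T_-(\phi)) = \diag\bigl(L_{BFH}(\phi)\bigr)$ by Corollary~\ref{cor:step1} and Theorem~\ref{prop:KL6}, and hence (by Remark~\ref{equal-half-leaves}) a half-leaf of $L_{BFH}(\phi)$ itself; by Proposition~\ref{prop:disj} $X$ is \emph{not} a half-leaf of $L_{BFH}(\phi^{-1})$. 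If $m < 0$, then $L_{BFH}(\phi^{mk}) = L_{BFH}(\phi^{-1})$, which forces $X$ to be repelling for $\Psi$, equivalently attracting for $\Psi^{-1} = g^{-k}$ on $\partial \FN$.

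To close the contradiction, pick $Y \in \partial \FN$ arbitrarily close to $X$ with $Y \notin \hat \iota^{-1}(S)$. Then $g^{-kn} Y \to X$ in $\partial \FN$, so by continuity and $G_\phi$-equivariance of $\hat \iota$,
\[
g^{-kn}\, \hat \iota(Y) \;=\; \hat \iota(g^{-kn}Y) \;\longrightarrow\; \hat \iota(X) \;=\; S.
\]
But $\hat \iota(Y) \neq S$, while $g^{-k}$ has attracting fixed point $g^{-\infty}$ and repelling fixed point $g^\infty = S$ on $\partial G_\phi$, so the left-hand side must converge to $g^{-\infty}$. Hence $g^{-\infty} = g^\infty$, contradicting the fact that $g$, being of infinite order in the word-hyperbolic group $G_\phi$, has two distinct fixed points on $\partial G_\phi$. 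Thus $m > 0$, which gives $m \geq 1$. The main obstacle I expect is the BFH classification of attracting vs.\ repelling fixed points for a given $\Aut(\FN)$-representative of $\phi^{mk}$: the rotationless hypothesis is essential to guarantee that every fixed point of $\Psi$ really is one or the other, and one has to match the attracting half-leaves with $L_{BFH}(\phi^{mk})$ of the correct sign, which is what ultimately pins down the sign of $m$.
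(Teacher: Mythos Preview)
Your argument is correct but follows a genuinely different route from the paper. The paper works on the tree side: it writes $g = w t_\Phi^m$ with $m\neq 0$, assumes (after possibly replacing $g$ by $g^{-1}$) that $m \geq 1$, observes that $g$ then acts on $T_-$ as a contracting homothety with unique fixed point $P_-(g) \in \bar T_-$, shows via Proposition~\ref{prop:Q} and Lemma~\ref{specification} that $\mathcal R_-(P_-(g)) = S$, and pushes the contracting dynamics on $\bar T_-$ forward through the continuous equivariant map $\mathcal R_-$ to conclude $S = g^\infty$. You instead argue entirely on the boundaries: you pass to a power so that $g^k$ fixes the finite fibre pointwise, invoke the classification of attracting versus repelling fixed points of a rotationless atoroidal iwip representative on $\partial F_N$, and compare the resulting local dynamics on $\partial F_N$ with the global North--South dynamics of $g$ on $\partial G_\phi$ to force a contradiction when $m<0$. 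The paper's proof is self-contained within the $\mathcal Q_-$/$\mathcal R_-$ machinery already built in Section~\ref{sect:laminations} and Proposition~\ref{iota-split}; your proof is dynamically more direct but imports a nontrivial external input, namely that for a rotationless atoroidal iwip lift every fixed point on $\partial F_N$ is attracting or repelling, with the attracting ones being half-leaves of $L_{BFH}$ of the correct sign. That input is true and standard (it is really the Gaboriau--Jaeger--Levitt--Lustig index theory rather than literally \cite{BH92,BFH97}), so your proof is valid; but note that the paper's approach avoids this detour entirely by using the homothety on $T_-$, which is precisely the tool tailored for this situation.
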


\begin{proof}
From the argument given in the proof of Theorem \ref{thm:mainresult} we see that $S$ is the fixed point of some element $g \in G_\phi$ of the form $g = wt_\Phi^m$, with $w\in F_N$ and $m \neq 1$.

In particular, we can assume, by possibly replacing $g$ by its inverse, that $m \geq 1$, so that
$g$ acts on $T_-$ as homothety $H_g$ with stretching factor $\lambda_g < 1$ (compare subsection \ref{trees-graphs}), 
and it has a unique fixed point $P_-(g)\in \bar T_-$. 
From the assumption that $S$ is of $\phi$-type, i.e. the $\hat \iota$-preimage of $S$ are half-leaves of $L(T_-)$, we obtain (using Proposition \ref{prop:Q} (3)) that $\mathcal Q_-(\hat \iota^{-1}(S)$ is contained in $\bar T_-$. 
It follows from the $G_\phi$-equivariance of the map $\mathcal R_-$ and the injectivity of its restriction to $\bar T_-$ (Lemma \ref{specification} (2))
that $\mathcal R_-(P_-(g)) = S$.

We now consider any point $Z \in \bar T_-$ which is distinct from $P_-(g)$, and hence (since $H_g$ is a homothety) not fixed by $g$. Since the stretching factor of $H_g$ satisfies $\lambda_g <1$, it follows that $\underset{n \to \infty}{\lim} g^n Z = P_-(g)$.  Hence it follows from the $G_\phi$-equivariance and the continuity of $\mathcal R_-$ that $g^n(\mathcal R_-(Z))$ converges towards $\mathcal R_-(P_-(g)) = S$: This implies $S = g^\infty$, since $g Z \neq Z$ and hence $\mathcal R_-(Z) \neq g(\mathcal R_-(Z))$, by 
Lemma \ref{specification} (2).
\end{proof}

 \begin{cor}\label{cor:4N-5}
 For any atoroidal iwip $\phi \in \Out(\FN)$ we have:
 \[
\sum_{[S]_{F_N}}
  \left(\deg([S]_{F_N})-2\right)\,\,\,\le\,\,\, 4N-5
 \]
where the sum is taken over all $F_N$-orbits $[S]_{F_N}$ of singular points $S$ in $\partial G_\phi$.
 
Moreover, the number of $F_N$-orbits of singular points in $\partial G_\phi$ is bounded above by $4N-5$.
 \end{cor}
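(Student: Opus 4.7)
The plan is to split the sum over all $F_N$-orbits of singular points according to whether they are of $\phi$-type or of $\phi^{-1}$-type, apply Theorem \ref{thm:types} to each piece separately, and then sharpen the resulting combined bound by one.

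By Corollary \ref{cor:step1} together with Proposition \ref{prop:disj}, the laminations $L(T_+(\phi))$ and $L(T_-(\phi))$ have no common half-leaves, so every non-simple point $S\in\partial G_\phi$ is of exactly one of the two types, and these two classes are mutually disjoint. Consequently the total sum decomposes as
\[
\sum_{[S]_{F_N}}\bigl(\deg([S]_{F_N})-2\bigr)
\;=\;\sum_{\phi\text{-type}}\bigl(\deg-2\bigr)\;+\;\sum_{\phi^{-1}\text{-type}}\bigl(\deg-2\bigr),
\]
and Theorem \ref{thm:types} bounds each of the two partial sums by $2N-2$. Summing the two bounds directly yields the naive estimate $4N-4$.

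The main obstacle is then to improve this by one to obtain the claimed $4N-5$. The $2N-2$ bound in Theorem \ref{thm:types} rests on the Coulbois--Hilion estimate $\ind(T_\pm(\phi))\le 2N-2$ (Proposition \ref{prop:Q-ind}), applied to each tree independently. To save one unit in the combined count, I would try to exploit the rigid link between $T_+(\phi)$ and $T_-(\phi)$ as the attracting and repelling fixed trees of one and the same outer automorphism $\phi$, together with Proposition \ref{double-positive}, which pins down the structure of every singular point as $S=g^\infty$ for some $g=wt_\Phi^m$ with $m\ne 0$, the sign of $m$ being dictated by the type. Concretely, I would examine the two unique fixed points of the homotheties $H_\pm$ in $\bar T_\pm(\phi)$ induced by a lift $\Phi$ of $\phi$, and check whether the compatibility between the two $\mathcal Q$-maps and the $G_\phi$-equivariant maps $\mathcal R_\pm$ of Proposition \ref{iota-split} forces at least one ``index unit'' to be either shared or unrealisable between the two types. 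This is the delicate step where I expect the real work of the proof to take place.

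Finally, the second assertion is a formal consequence of the first. For every singular $S\in\partial G_\phi$ one has $\deg(S)\ge 3$, so every summand $\deg([S]_{F_N})-2$ in the sum above is at least $1$. Hence the number of $F_N$-orbits of singular points in $\partial G_\phi$ is bounded above by the sum itself, and therefore by $4N-5$.
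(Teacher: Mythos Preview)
Your overall architecture is exactly that of the paper: split the sum into $\phi$-type and $\phi^{-1}$-type, apply Theorem~\ref{thm:types} to each piece to get $2N-2+2N-2=4N-4$, and then argue that the bound can be sharpened by one. Your deduction of the second assertion from the first is also correct and matches the paper.

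The genuine gap is in the sharpening step. You correctly identify that this is where the work lies, but your proposed mechanism (comparing fixed points of $H_\pm$ and looking for a shared or unrealisable ``index unit'' via the maps $\mathcal R_\pm$) is speculative and is not the argument the paper uses; as written it is not a proof. The paper's argument is different and much shorter: the bound $4N-4$ is attained only if \emph{both} partial sums equal $2N-2$, i.e.\ only if $\ind(T_+)=\ind(T_-)=2N-2$. But by results of Coulbois--Hilion~\cite{CH12}, for an $\R$-tree in $\cvnbar$ with dense orbits the equality $\ind(T)=2N-2$ holds precisely when $T$ is \emph{geometric}; and if both $T_+(\phi)$ and $T_-(\phi)$ are geometric then (by \cite{Gui3,CH12}) the iwip $\phi$ is induced by a homeomorphism of a surface with boundary. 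This contradicts the hypothesis that $\phi$ is atoroidal (Remark~\ref{hyp-ator-surf}). Hence at least one of the two partial sums is $\le 2N-3$, giving the total bound $4N-5$.

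So the missing idea is the equality case of the $\mathcal Q$-index inequality and its botanical consequence (geometric $T_+$ \emph{and} geometric $T_-$ forces a surface automorphism), not a local analysis of the homothety fixed points.
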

 
\begin{proof}
By splitting the sum on the left of the claimed inequality into two partial sums, one for all $S$ of $\phi$-type, and one for all $S$ of $\phi^{-1}$-type, we obtain directly from the inequality of Theorem \ref{thm:types} the upper bound $4N-4$ on the right hand side of the inequality. However, the only way to get equality would be if both of the above partial sums add up to $2N-2$. But this happens if and only if both trees $T_+$ and $T_-$ are geometric (see 
\cite{CH12}), which in turn implies (see \cite{Gui3,CH12}) that $\phi$ is induced by a homeomorphisms of a surface with boundary, contradicting the assumption that $\phi$ is atoroidal (see Remark \ref{hyp-ator-surf}).

The bound on the number of orbits of singular points is an immediate consequence of this inequality, since each such orbit has degree $\geq 3$.
\end{proof}

\begin{rem}\label{lem:2sing}
It follows from Remark \ref{laminations-folk} (1) (or alternatively, from using the action of $G_\phi$ on the attracting tree $T_+$ rather than on $T_-$)
that
there exists at least one singular point of $\phi$-type and at least one singular point of $\phi^{-1}$-type in $\partial G_\phi$. 
 In particular, 
 there exist at least 2 distinct 
 $\FN$-orbits of singular points in $\partial G_\phi$.
 \end{rem}

 \begin{thm}\label{thm:4N-1}
 Let $\phi\in \Out(F_N)$ be an atoroidal iwip, 
 let $\hat \iota: \partial F_N\to \partial G_\phi$ be the Cannon-Thurston map,
and let $g \in G_\phi \smallsetminus \{1\}$ be arbitrary.
 
Then
\[
\deg(g^\infty) +\deg(g^{-\infty}) \le 4N-1. 
\]

 \end{thm}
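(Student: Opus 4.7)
The plan is to split the argument by whether $g$ lies in $F_N$ or not, and in the latter case to identify the two fixed points of $g$ on $\partial G_\phi$ with homothety fixed points in $\bar T_-$ and $\bar T_+$, so that the desired bound is extracted from Corollary~\ref{cor:4N-5}.

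First, I would dispose of the case $g \in F_N \setminus \{1\}$: here $g^\infty$ and $g^{-\infty}$ in $\partial G_\phi$ agree with the $\hat\iota$-images of the two fixed points of $g$ in $\partial F_N$, both of which are rational. Proposition~\ref{rational-simple} then gives $\deg(g^\infty) + \deg(g^{-\infty}) = 2 \leq 4N-1$. Otherwise we write $g = w t_\Phi^m$ with $m \neq 0$ and, after possibly replacing $g$ by $g^{-1}$ (which only swaps the two quantities we wish to bound), assume $m \geq 1$. Exactly as in the proof of Proposition~\ref{double-positive}, $g$ acts on $T_-$ as a contracting homothety and so has a unique fixed point $P_- \in \bar T_-$ with $\cal R_-(P_-) = g^\infty$; symmetrically, $g^{-1}$ contracts on $T_+$ and has a unique fixed point $P_+ \in \bar T_+$ with $\cal R_+(P_+) = g^{-\infty}$. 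Using that $\hat\iota = \cal R_- \circ \cal Q_- = \cal R_+ \circ \cal Q_+$ together with the two parts of Lemma~\ref{specification} (applied to $T_-$ and to $T_+$), I would deduce that
\[
\hat\iota^{-1}(g^\infty) \;=\; \cal Q_-^{-1}(P_-) \qquad \text{and} \qquad \hat\iota^{-1}(g^{-\infty}) \;=\; \cal Q_+^{-1}(P_+).
\]

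From Proposition~\ref{prop:Q} it follows that any two distinct preimages of $g^\infty$ are $L(T_-)$-related and any two distinct preimages of $g^{-\infty}$ are $L(T_+)$-related. Hence if $g^\infty$ is not simple it is of $\phi$-type, and if $g^{-\infty}$ is not simple it is of $\phi^{-1}$-type. Since the type is $F_N$-invariant and the two types are mutually exclusive (by Proposition~\ref{prop:disj}), whenever both of $g^\infty$ and $g^{-\infty}$ are non-simple they lie in distinct $F_N$-orbits. Corollary~\ref{cor:4N-5} applied to these two orbits then yields
\[
(\deg(g^\infty) - 2) + (\deg(g^{-\infty}) - 2) \;\leq\; 4N-5,
\]
where regular contributions (i.e.\ $\deg = 2$) are $0$ and may simply be added to the singular sum without effect; rearranging gives the desired inequality. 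The remaining case, in which one of $g^{\pm\infty}$ is simple, is handled by the trivial bound $\deg \leq 2N$ from Theorem~\ref{thm:mainresult}(1): we get $\deg(g^\infty) + \deg(g^{-\infty}) \leq 1 + 2N \leq 4N - 1$, using that $N \geq 3$ by Remark~\ref{hyp-ator-surf}.

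The principal obstacle in carrying this out is the rigorous identification $\hat\iota^{-1}(g^\infty) = \cal Q_-^{-1}(P_-)$ (and its analogue for $g^{-\infty}$), which requires combining the dynamical input that $g$ contracts on $\bar T_-$ with both the injectivity of $\cal R_-|_{\bar T_-}$ and the separation statement $\cal R_-(\bar T_-) \cap \cal R_-(\hat T_- \setminus \bar T_-) = \emptyset$ from Lemma~\ref{specification}. Once this identification is in place, the rest of the proof is a bookkeeping argument that leverages the sharpening of $4N-4$ to $4N-5$ already established in Corollary~\ref{cor:4N-5}.
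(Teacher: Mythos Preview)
Your proof is correct and follows essentially the same strategy as the paper: reduce the main case to Corollary~\ref{cor:4N-5} by showing that $g^\infty$ and $g^{-\infty}$ have opposite types (hence lie in distinct $F_N$-orbits), and handle the residual easy cases via the $2N$ bound from Theorem~\ref{thm:mainresult}(1). The only differences are cosmetic: the paper cites Proposition~\ref{double-positive} directly for the type determination instead of re-deriving it through your identification $\hat\iota^{-1}(g^\infty)=\mathcal Q_-^{-1}(P_-)$, and it splits into ``both singular'' versus ``at least one simple or regular'' rather than your ``both non-simple'' versus ``one simple'' decomposition (which absorbs the regular case into the Corollary~\ref{cor:4N-5} sum as a zero contribution).
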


\begin{proof}
If at least one of $g^\infty$ or $g^{-\infty}$ is simple or regular, the inequality follows directly from Theorem \ref{thm:mainresult} (1).
Otherwise 
we obtain from 
Proposition \ref{double-positive} that precisely one of $g^\infty, g^{-\infty}$ is of $\phi$-type and one is of $\phi^{-1}$-type, and hence they can not belong to the same $G_\phi$-orbit. Hence the asserted inequality is a direct consequence of Corollary \ref{cor:4N-5}.
\end{proof}

\begin{rem}
\label{sharp-bound}
The upper bounds given in Theorem \ref{thm:types}, Theorem \ref{thm:mainresult} (1), Corollary \ref{cor:4N-5} and  Theorem \ref{thm:4N-1} are sharp:  A concrete example, for every $N \geq 3$, 
where for each of these statements the given inequality is actually an equality,
has been worked out in \cite{JL}. 

The same examples show also that the ``lower bound'' given in Remark \ref{lem:2sing} is sharp:  In these examples there is only one $\FN$-orbit of singular point of $\phi$-type and only one of $\phi^{-1}$-type.

Examples for $G_\phi$ with only one $\FN$-orbit of singular points $S$ of $\phi$-type, with $\deg (S) = 3$ have been worked out by C. Pfaff, for the case $N = 3$ (see 
\cite{Pfaff}).
\end{rem}

\medskip

Recall that for any non-elementary hyperbolic group $G$ the Gromov boundary $\partial G$ has uncountable cardinality. Since $G$ is finitely generated and hence countable, it follows that there are uncountably many $G$-orbits $G\cdot S$ of points $S \in \partial G$.

\begin{prop}\label{prop:reg}
Let 
$\phi\in \Out(F_N)$ be an atoroidal iwip. Then there are uncountably many simple points in $\partial G_\phi$ 
and hence
also uncountably many $G_\phi$-orbits of such simple points.
\end{prop}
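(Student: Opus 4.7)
The plan is to combine Corollary~\ref{cor:step1} with the abundance of non-half-leaves provided by Remark~\ref{laminations-folk}~(3). First I would observe that a point $S \in \partial G_\phi$ is simple if and only if any (equivalently, every) $X \in \hat\iota^{-1}(S)$ is \emph{not} a half-leaf of $L(T_+) \cup L(T_-)$: indeed, by Corollary~\ref{cor:step1}, the non-trivial identifications produced by $\hat\iota$ are exactly the leaves of $L(T_+) \cup L(T_-)$, so if $X$ belongs to no such leaf, then $\hat\iota^{-1}(\hat\iota(X)) = \{X\}$, and conversely.

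Next I would identify the relevant half-leaf sets. By Proposition~\ref{prop:step1}, $L(T_-) = \diag(L_{BFH}(\phi))$ and symmetrically $L(T_+) = \diag(L_{BFH}(\phi^{-1}))$. By Remark~\ref{equal-half-leaves} we therefore have the equalities of half-leaf sets
\[
L(T_-)^1 = L_{BFH}(\phi)^1, \qquad L(T_+)^1 = L_{BFH}(\phi^{-1})^1.
\]
By Proposition~\ref{minimal-distinct}, both $L_{BFH}(\phi)$ and $L_{BFH}(\phi^{-1})$ are minimal algebraic laminations, and they are distinct. Applying Remark~\ref{laminations-folk}~(3) to this finite collection of minimal laminations, one obtains uncountably many points $X \in \partial F_N$ which are half-leaves of neither $L_{BFH}(\phi)$ nor $L_{BFH}(\phi^{-1})$, and hence half-leaves of neither $L(T_+)$ nor $L(T_-)$.

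Finally, for any such $X$, the image $\hat\iota(X) \in \partial G_\phi$ is simple by the first paragraph, and the restriction of $\hat\iota$ to this set of $X$'s is injective (since distinct $X$'s in this set cannot be identified by $\hat\iota$). This yields uncountably many simple points in $\partial G_\phi$. Since $G_\phi$ is countable, every $G_\phi$-orbit in $\partial G_\phi$ is countable, so uncountably many simple points must fall into uncountably many distinct $G_\phi$-orbits. I do not anticipate any genuine obstacle here: everything reduces to bookkeeping with the half-leaf identifications supplied by Proposition~\ref{prop:step1} and Remark~\ref{equal-half-leaves}, together with the symbolic-dynamics input of Remark~\ref{laminations-folk}~(3).
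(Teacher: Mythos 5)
Your proposal is correct and follows essentially the same route as the paper: identify the half-leaf sets of $L(T_\pm)$ with those of $L_{BFH}(\phi^{\mp 1})$ via the diagonal-extension equality and Remark~\ref{equal-half-leaves}, invoke minimality and Remark~\ref{laminations-folk} to get uncountably many points of $\partial F_N$ that are half-leaves of neither lamination, and conclude via Corollary~\ref{cor:step1} that these map injectively to simple points, with countability of $G_\phi$ giving uncountably many orbits. (Your citation of Remark~\ref{laminations-folk}~(3) is in fact the precise statement needed; the paper's proof points to part~(2), but the argument is the same.)
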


\begin{proof}
From Lemma \ref{equal-half-leaves} and Theorem \ref{prop:KL6} we know for 
the zero laminations $L(T_+)$ and $L(T_-)$ 
that 
their sets of half-leaves satisfy 
$L^1(T_-)$ $ = L^1_{BFH}(\phi)$ and  $L^1(T_+) = L^1_{BFH}(\phi^{-1})$. Since $L_{BFH}(\phi)$ and $L_{BFH}(\phi^{-1})$ are minimal (see Remark \ref{minimal-distinct}) we can apply Remark 
\ref{laminations-folk} (2)
to obtain that the complement $\partial\FN \smallsetminus (L^1(T_+) \cup L^1(T_-))$ is uncountable. It follows from 
Corollary \ref{cor:step1}
that all of these complementary points are mapped by $\hat\iota$ to distinct points of $\partial G_\phi$, and that those are all simple.
\end{proof}

\begin{prop} \label{prop:semi}

The set of regular points 
in $\partial G_\phi$ is uncountable.
In particular,
there are uncountably many $G_\phi$-orbits of regular points
in  $\partial G_\phi$.
\end{prop}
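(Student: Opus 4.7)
The plan is to exhibit uncountably many regular points by analyzing the $\hat\iota$-images of half-leaves of the Bestvina-Feighn-Handel lamination $L_{BFH}(\phi)$. By Proposition \ref{minimal-distinct} the lamination $L_{BFH}(\phi)$ is minimal and non-rational (it cannot be rational, since $\phi$ is atoroidal), so by Remark \ref{laminations-folk} (2) its set of half-leaves is uncountable. The strategy will be to show that only countably many of these half-leaves can be mapped to singular points, leaving uncountably many half-leaves that necessarily land on regular points.

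First I would observe that every half-leaf $X$ of $L_{BFH}(\phi)$ gives a non-simple image $\hat\iota(X)$ of $\phi$-type: by definition of a half-leaf there exists $Y \neq X$ with $(X,Y) \in L_{BFH}(\phi) \subset L(T_-)$, so Corollary \ref{cor:step1} yields $\hat\iota(X) = \hat\iota(Y)$ and therefore $\deg(\hat\iota(X)) \geq 2$. By Definition \ref{degree} the point $\hat\iota(X)$ is accordingly regular or singular, and more precisely of $\phi$-type. Next I would invoke Theorem \ref{thm:mainresult} (2) to bound the set of singular $\phi$-type points: there are only finitely many $F_N$-orbits of them, each orbit is countable since $F_N$ is countable, and each such singular point $S$ has $\deg(S) \leq 2N$ preimages under $\hat\iota$ by Theorem \ref{thm:mainresult} (1). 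Consequently the full $\hat\iota$-preimage in $\partial F_N$ of the union of all singular $F_N$-orbits of $\phi$-type is a countable set.

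Removing this countable ``singular'' subset from the uncountable set of half-leaves of $L_{BFH}(\phi)$ leaves uncountably many half-leaves whose images under $\hat\iota$ are forced to be regular. Since every regular point has exactly two preimages, this yields uncountably many distinct regular points in $\partial G_\phi$. Finally, because $G_\phi$ is countable each $G_\phi$-orbit in $\partial G_\phi$ is countable, so uncountably many regular points automatically give uncountably many $G_\phi$-orbits of such points. No step in this argument looks to be an actual obstacle, since every ingredient---the identification of fibers of $\hat\iota$ with leaves of $L(T_\pm)$, the minimality and uncountable half-leaf set of $L_{BFH}(\phi)$, and the finiteness of the singular locus---has already been established earlier in the paper; the only delicate point is ensuring that ``non-singular'' really means ``regular'' here, which is precisely what the first step of the plan secures.
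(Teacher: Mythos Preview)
Your proposal is correct and follows essentially the same line as the paper's own proof: both argue that the (uncountable) set of half-leaves of $L_{BFH}(\phi)\subset L(T_-)$ maps under $\hat\iota$ to non-simple points, and then subtract the countable preimage of the singular locus (finitely many $F_N$-orbits, each of bounded degree) to obtain uncountably many regular images. Your write-up is in fact slightly more careful than the paper's terse version---in particular you make explicit why the remaining points are genuinely regular rather than merely ``not singular,'' and your justification that $L_{BFH}(\phi)$ is non-rational (via atoroidality and $\phi$-invariance of the lamination) is sound.
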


\begin{proof}

From Remark \ref{minimal-distinct} (1) and Remark \ref{laminations-folk} (3) we know that both, $L(T_+)$ and $L(T_-)$, are uncountable sets, and hence there are uncountably many $\FN$-orbits in each of them. From Theorem \ref{thm:mainresult} we know that there are only finitely many $\FN$-orbits of singular points in $\partial G_\phi$, and that their degree is bounded by $2N$. Hence it follows from Corollary \ref{cor:step1} that there are uncountably many regular points in $\partial G_\phi$, and hence also uncountably many $\FN$-orbits.
\end{proof}

\appendix
\section{Mitra's results on quasiconvexity of subgroups in hyperbolic free-by-cyclic groups}\label{A}

Let $L$ be an algebraic lamination on $F_N$ and let $H\le F_N$ be a finitely generated subgroup. Thus $H$ is quasi-isometrically embedded in $F_N$ and hence $\partial H\subseteq \partial F_N$.
Following~\cite{BFH97}, we say that a leaf $(X,Y)\in L$ is \emph{carried} by $H$ if there exist $w\in F_N$ and $X',Y'\in \partial H$ such that $(X,Y)=w(X',Y')$.
We say that $L$ is \emph{minimally filling} in $F_N$ if no leaf of $L$ is carried by a finitely generated subgroup of infinite index in $F_N$.

Proposition~\ref{prop:step1}  shows that for a hyperbolic iwip $\phi\in \Out(F_N)$ we have $\Lambda_\phi=L(T_-)=\diag \left(L_{BFH}(\phi)\right)$ and $\Lambda_{\phi^{-1}}=L(T_+)=\diag \left(L_{BFH}(\phi^{-1})\right)$. This relationship between $\Lambda_{\phi^{\pm 1}}$ and $L_{BFH}(\phi^{\pm 1})$ is more delicate than one might suspect upon initial examination of the definitions of these objects, and there do exist some incorrect claims on this topic in the literature. 

Thus in a 1999 article~\cite{Mitra99} Mitra mistakenly claims, with a reference to Proposition~1.6 in \cite{BFH97},  that $\Lambda_\phi=L_{BFH}(\phi)$ and $\Lambda_{\phi^{-1}}=L_{BFH}(\phi^{-1})$; that mistake is based on misreading the definition of weak convergence (Definition~1.5 in \cite{BFH97}) and consequently misapplying Proposition~1.6 of \cite{BFH97}. The mistaken claim that  $\Lambda_{\phi^{\pm 1}}=L_{BFH}(\phi^{\pm 1})$ is then used in the proof of one of the main results of \cite{Mitra99}, Theorem~3.4 there:

\begin{thm}\label{T:Mitra99}\cite{Mitra99}
Let $\Phi\in \Aut(F_N)$  be a hyperbolic iwip and let $G_\Phi=F_N\rtimes_\Phi \mathbb Z$ (so that  $G_\Phi$ is word-hyperbolic). Then a finitely generated subgroup $H_1$ of $F_N$ is quasiconvex in $G_\Phi$ if and only if $H_1$ has infinite index in $F_N$. 
\end{thm}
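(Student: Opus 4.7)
The ``only if'' direction is standard: if $[F_N:H_1]<\infty$, then $H_1$ has finite Hausdorff distance from $F_N$ in the Cayley graph of $G_\Phi$, so $H_1$ is quasiconvex in $G_\Phi$ if and only if $F_N$ is. Since $F_N$ is an infinite normal subgroup of infinite index in the non-elementary word-hyperbolic group $G_\Phi$, it fails to be quasiconvex by \cite{Alonso91}, and hence so does $H_1$.

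For the ``if'' direction, assume that $H_1\le F_N$ is finitely generated of infinite index. My plan is to repair Mitra's strategy from \cite{Mitra99} by substituting the correct identification of $\Lambda_{\phi^{\pm 1}}$ given by Proposition~\ref{prop:step1} for his erroneous claim $\Lambda_{\phi^{\pm 1}}=L_{BFH}(\phi^{\pm 1})$. The portion of Mitra's argument that is independent of this identification establishes a general criterion: $H_1$ is quasiconvex in $G_\Phi$ if and only if no leaf of $\Lambda_\phi\cup\Lambda_{\phi^{-1}}$ is carried by $H_1$ in the sense defined at the start of this appendix. Combining this criterion with Proposition~\ref{prop:step1} reduces the theorem to the following key claim: no leaf of $\diag(L_{BFH}(\phi^{\pm 1}))$ is carried by $H_1$.

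The key external input toward this claim is the minimally filling property of $L_{BFH}(\phi^{\pm 1})$ for an iwip $\phi$, proved in \cite{BFH97}: no leaf of $L_{BFH}(\phi^{\pm 1})$ itself is carried by any finitely generated subgroup of infinite index in $F_N$. The remaining work is to propagate this through the diagonal closure. Suppose for contradiction that some $(Z_0,Z_m)\in\diag(L_{BFH}(\phi))$ is carried by $H_1$; after conjugating, we may assume $Z_0,Z_m\in\partial H_1$. By Definition~\ref{diagonal-extension} there exist $Z_1,\ldots,Z_{m-1}\in\partial F_N$ with $(Z_i,Z_{i+1})\in L_{BFH}(\phi)\subset L(T_-)$ for $0\le i\le m-1$, and by Proposition~\ref{prop:Q}(1) all of $Z_0,\ldots,Z_m$ share a common $\cal Q_-$-image $P\in\hat T_-$. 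In particular, $Z_0$ is a half-leaf of $L_{BFH}(\phi)$ lying in $\partial H_1$.

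The main obstacle, and precisely the step at which Mitra's original proof breaks down, is to leverage this half-leaf coincidence into an actual leaf of $L_{BFH}(\phi)$ carried by $H_1$, so that minimally filling then yields the desired contradiction. My intended route is to exploit the minimality of $L_{BFH}(\phi)$ from Proposition~\ref{minimal-distinct}(1): every half-leaf generates the entire lamination, so the $F_N$-orbits of leaves with a prescribed half-leaf are dense in $L_{BFH}(\phi)$. Combining this density with the $H_1$-invariance and closedness of $\partial H_1\subset\partial F_N$, together with the expanding train-track dynamics controlling $L_{BFH}(\phi)$, one aims to produce a leaf of $L_{BFH}(\phi)$ both of whose half-leaves lie in a conjugate of $\partial H_1$, contradicting minimally filling. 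The symmetric argument applied to $\phi^{-1}$ disposes of $\diag(L_{BFH}(\phi^{-1}))$, completing the proof of the key claim and hence of the theorem.
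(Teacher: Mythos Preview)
Your overall architecture matches the paper's: both use Mitra's quasiconvexity criterion from \cite{Mitra99} (Theorem~3.3 there), invoke Proposition~\ref{prop:step1} to replace $\Lambda_{\phi^{\pm 1}}$ by $\diag(L_{BFH}(\phi^{\pm 1}))$, and then reduce to the minimally-filling result of \cite{BFH97}. The ``only if'' direction is fine.

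The gap is in your final paragraph. You correctly isolate the obstacle: from a diagonal leaf $(Z_0,Z_m)$ carried by $H_1$ you extract a \emph{half-leaf} $Z_0\in\partial H_1$ of $L_{BFH}(\phi)$, but then you need a contradiction. Your proposed route---use minimality and density of $F_N$-orbits together with closedness of $\partial H_1$ to manufacture a full leaf of $L_{BFH}(\phi)$ with both endpoints in some $w\,\partial H_1$---is not a proof as written, and it is not clear it can be made into one. Density holds for $F_N$-orbits, not $H_1$-orbits, so the $H_1$-invariance of $\partial H_1$ gives no purchase; nothing in your sketch forces the \emph{second} endpoint of an approximating leaf to land in $\partial H_1$.

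The paper's fix (Proposition~\ref{prop:algfill}) avoids this entirely by a sharper reading of \cite{BFH97}: the proof of Proposition~2.4 there in fact establishes the stronger \emph{half-leaf} statement, namely that for $H_1$ finitely generated of infinite index no single half-leaf of $L_{BFH}(\phi)$ lies in any $w\,\partial H_1$. With that in hand, your observation that $Z_0\in\partial H_1$ is a half-leaf of $L_{BFH}(\phi)$ already yields the contradiction---no further density or dynamics argument is needed. The paper also notes an alternative route via Reynolds~\cite{Rey11}: $T_-$ is indecomposable, so $L(T_-)\supseteq\Lambda_\phi$ is minimally filling outright.
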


Since, as noted above, $L_{BFH}(\phi^{\pm 1})$ are contained in but not equal to $\Lambda_{\phi^{\pm 1}}$, this creates a gap in the proof of Theorem~3.4 given in \cite{Mitra99}. 
This gap can be fixed, using, for example, Proposition~\ref{prop:step1}, in the following way. To obtain Theorem~3.4 in \cite{Mitra99}, Mitra uses Theorem~3.3 in \cite{Mitra99}, whose proof does go through if one knows that for a hyperbolic iwip $\phi\in\Out(F_N)$ each $\Lambda_{\phi^{\pm 1}}$ is minimally filling in $F_N$.  Proposition~2.4 in \cite{BFH97} shows that for any iwip $\phi\in\Out(F_N)$ the laminations $L_{BFH}(\phi^{\pm 1})$ are minimally filling in $F_N$. But, as noted above, since laminations $\Lambda_{\phi^{\pm 1}}$ are bigger than $L_{BFH}(\phi^{\pm 1})$ and have more leaves than the latter, Proposition~2.4 in \cite{BFH97}  does not directly imply that $\Lambda_{\phi^{\pm 1}}$ are minimally filling in $F_N$ also.

We can show that   $\Lambda_{\phi^{\pm 1}}$ are minimally filling in $F_N$, thereby fixing the proofs of Theorems~3.3 and 3.4 in \cite{Mitra99}, in a couple of different ways:
\begin{prop}\label{prop:algfill}
Let $\phi\in \Out(F_N)$ be a hyperbolic iwip. Then the laminations $\Lambda_{\phi}$ is minimally filling in $F_N$.
\end{prop}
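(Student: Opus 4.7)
The plan is to argue by contradiction: suppose there is a leaf $(X, Y) \in \Lambda_\phi$ carried by a finitely generated subgroup $H \le F_N$ of infinite index, so that $(X, Y) = w(X', Y')$ with $X', Y' \in \partial H$ for some $w \in F_N$. Using the $F_N$-invariance of $\Lambda_\phi$ and replacing $H$ by the conjugate $wHw^{-1}$, I would assume $X, Y \in \partial H$. Proposition~\ref{prop:step1} gives $\Lambda_\phi = \diag(L_{BFH}(\phi))$, and combining with Remark~\ref{equal-half-leaves} I would deduce that the set of half-leaves of $\Lambda_\phi$ coincides with the set of half-leaves of $L_{BFH}(\phi)$; in particular, $X$ is a half-leaf of $L_{BFH}(\phi)$.

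The main line of argument would then be to invoke a combinatorial filling property that is a mild strengthening of BFH's Proposition~2.4 in \cite{BFH97}. Representing $\phi$ by an expanding train track map $f: \tau \to \tau$ and fixing a basis $\mathcal{A}$ of $F_N$ compatible with a marking of $\tau$, one can show, using the expansion of $f$ together with the minimality of $L_{BFH}(\phi)$, that every half-leaf of $L_{BFH}(\phi)$ corresponds to a half-infinite legal path in $\tau$ that visits every edge of $\tau$ infinitely often. Equivalently, the infinite reduced word $X_\mathcal{A}$ in $\mathcal{A}^{\pm 1}$ representing $X$ contains every finite legal subword of $\tau$ as a subword.

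On the other hand, since $H$ has infinite index in $F_N$, the Stallings folded graph $\Gamma_H$ of $H$ (a finite graph immersed in the rose over $\mathcal{A}$) must have at least one vertex with a missing outgoing rose-direction. This produces a finite reduced word $u \in F(\mathcal{A})$ that does not occur as a subword of any reduced path in $\Gamma_H$. Since $X \in \partial H$ means that $X_\mathcal{A}$ lifts to a half-infinite reduced path in $\Gamma_H$, the word $u$ cannot appear in $X_\mathcal{A}$. After arranging that $u$, possibly extended on either side by a suitable legal prefix and suffix in $\tau$, is itself a legal finite subword of $\tau$, this would contradict the filling property of the previous paragraph and complete the proof.

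The main technical hurdle is making these two combinatorial ingredients precise: first, promoting BFH's Proposition~2.4 from the statement about full leaves to the analogous statement about half-leaves, which is implicit in their proof but requires explicit formulation; and second, arranging so that the forbidden word $u$ arising from the missing direction in $\Gamma_H$ can be realized as (a factor of) a legal finite word of $\tau$. Both points are routine in the BFH train track calculus for iwip automorphisms, but must be carried out with care.
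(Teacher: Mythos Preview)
Your approach is correct and essentially identical to the paper's first proof: both reduce, via $\Lambda_\phi = \diag(L_{BFH}(\phi))$, to a half-leaf version of Proposition~2.4 of \cite{BFH97}, with the paper simply asserting (with a reference also to~\cite{Ka}) that BFH's argument goes through verbatim for half-leaves rather than sketching the train-track details you outline. The paper also records a second, independent proof: by Reynolds~\cite{Rey11} the lamination $L(T)$ is minimally filling whenever $T\in\cvnbar$ is free and indecomposable, and the easy inclusion $\Lambda_\phi \subseteq L(T_-)$ from Proposition~\ref{prop:step1} then gives the result.
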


\begin{proof}
The proof of Proposition~2.4 in \cite{BFH97}  goes through verbatim for half-leaves of $L_{BFH}(\phi)$ (in the sense of Definition~\ref{defn:algebraic_lamination}). This proof (see also the proof of Proposition~4.6 in \cite{Ka}) shows that if $(X,Y)\in L_{BFH}(\phi)$ and $H_1\le F_N$ is a finitely generated subgroup of infinite index, then there do not exist $w\in F_N$, $Y'\in \partial H_1$ such that $wY'=Y$.
Suppose now that $(X,Y)\in \Lambda_\phi$ is a leaf of $\Lambda_\phi$ such that $(X,Y)$ is carried by a finitely generated subgroup $H_1$ of infinite index in $F_N$, that is $(X,Y)=w(X',Y')$ for some $w\in F_N$ and $X',Y'\in \partial H_1$. Since, by Proposition~\ref{prop:step1}, $\Lambda_\phi=\diag \left(L_{BFH}(\phi)\right)$, it follows that there exists $X_1\in \partial F_N$ such that $(X_1,Y)\in L_{BFH}(\phi)$.  Since $Y=wY'$ and $Y'\in \partial H_1$, we get a contradiction with the modified "half-leaf" version of Proposition~2.4 in \cite{BFH97}  stated above. Hence $\Lambda_{\phi}$ is minimally filling, as required.

Another way to see that $\Lambda_{\phi}$ is minimally filling is via a recent general result of Reynolds~\cite{Rey11}. In \cite{Rey11} he proves that if $T\in\cvnbar$ is a free $F_N$-tree which is "indecomposable" (in the sense of Guirardel~\cite{Gui3}), then $L(T)$ is minimally filling. It is well-known that for a hyperbolic iwip $\phi$ the trees $T_\pm$ are $F_N$-free; as shown recently by Coulbois and Hilion~\cite{CH12}, the trees $T_\pm$ are also indecomposable. Hence $L(T_\pm)$ are filling. The proof of Proposition~\ref{prop:step1} above shows that establishing the inclusion $\Lambda_\phi\subseteq L(T_-)$ (rather than actual equality) is fairly straightforward and does not require invoking Proposition~\ref{prop:KL6}. Thus $\Lambda_\phi\subseteq L(T_-)$  and since $L(T_-)$ is minimally filling, it follows that $\Lambda_\phi$ is minimally filling as well.
\end{proof}

The above arguments fill the gap in the proof of Theorem~3.4 in \cite{Mitra99}.  See an updated and corrected (September 2012) version~\cite{M5} of Mitra's 1999 paper~\cite{Mitra99} for additional details.

\affiliationone{Ilya Kapovich\\Department of Mathematics\\ University
  of Illinois at Urbana-Champaign\\ 1409 West Green Street\\ Urbana, IL 61801, U.S.A.
   \email{kapovich@math.uiuc.edu}}
\affiliationtwo{Martin Lustig\\LATP,
Centre de Math\'ematiques et Informatique\\
Aix-Marseille Universit\'e\\
39, rue F.~Joliot Curie\\ 
13453 Marseille 13, France
   \email{Martin.Lustig@univ-amu.fr}}

\end{document}